\newtheorem{theorem}{Theorem}[section]
\newtheorem{proposition}[theorem]{Proposition}
\newtheorem{lemma}[theorem]{Lemma}
\newtheorem{corollary}[theorem]{Corollary}
\theoremstyle{definition}
\newtheorem{definition}{Definition}[section]
\newtheorem*{remark}{Remark}
\newtheorem*{notation}{Notation}
\newtheorem*{acknowledgment}{Acknowledgment}
\numberwithin{equation}{section}
\newcommand{\Z}{\mathbb{Z}}
\newcommand{\Q}{\mathbb{Q}}
\newcommand{\C}{\mathbb{C}}
\newcommand{\A}{\mathbb{A}}
\newcommand{\G}{\varGamma}
\newcommand{\g}{\gamma}
\newcommand{\la}{\lambda}
\newcommand{\ve}{\varepsilon}
\DeclareMathOperator{\Tr}{Tr}
\DeclareMathOperator{\re}{Re}
\DeclareMathOperator{\Sym}{Sym}
\DeclareMathOperator{\sgn}{sgn}
\DeclareMathOperator{\diag}{diag}
\DeclareMathOperator{\rank}{rank}
\DeclareMathOperator{\ord}{ord}
\newcommand{\Sp}{\mathit{Sp}}
\newcommand{\GL}{\mathit{GL}}
\newcommand{\SL}{\mathit{SL}}
\newcommand{\sikakubox}{\leavevmode
  \hbox to.77778em{%
  \hfil\vrule
  \vbox to.775em{\hrule width.25em\vfil\hrule}%
  \vrule\hfil}}
\newenvironment{smatrix}{\left( \begin{smallmatrix}}{\end{smallmatrix} \right)}
\newcommand{\ha}{\mathbb{H}}
\newcommand{\epi}{\mathbf{e}}
\newcommand{\fd}{\mathfrak{d}}
\newcommand{\bfI}{\mathbf{1}}
\title[Fourier coefficients of Siegel Eisenstein series of prime level]{Recursion formulas for the Fourier coefficients of Siegel Eisenstein series of an odd prime level}
\author{Keiichi Gunji}
\date{}
\email{keiichi.gunji@it-chiba.ac.jp}
\address{Department of Mathematics, Chiba Institute of Technology,
 2-1-1 Shibazono, Narashino, Chiba, 275-0023, Japan}
\begin{document}
\maketitle

\begin{abstract} In this paper we treat the Fourier coefficients of Siegel Eisenstein series of level $p$ with trivial or quadratic character, for an odd prime $p$. The Euler $p$-factor of the Fourier coefficient is called the ramified Siegel series. First we show that the ramified Siegel series attached to each cusp can be decomposed to $U(p)$-eigenfunctions explicitly, next we give recursion formulas of such $U(p)$-characteristic ramified Siegel series.
\end{abstract}

\section{Introduction} \label{Sec_introduction}

Although Siegel Eisenstein series is one of the most basic objects in the theory of Siegel modular forms, its Fourier coefficients are mysterious and quite difficult to calculate explicitly. The Fourier coefficients have Euler product expressions and each local factor is called the \emph{Siegel series}.
 In 1999 Katsurada gave an explicit formula of the Siegel series, hence the formula of the Fourier coefficients, for full modular case in \cite{Kat}. His method is consist of two pillars, one is the inductive relation and the other is the functional equation of the Siegel series. 
We shall explain it in detail. 

For simplicity we only treat the local factor for an odd prime $p$. Let $\Sym^n(\Z_p)^*$ be the set of half integral symmetric matrices of size $n$. For $N \in \Sym^n(\Z_p)^*$ with $\det N \ne 0$, we denote the Siegel series of $N$ at $p$ by $S_{n,p}(N,s)$, here $s$ is a complex variable.
Let $N_0 = \diag(\alpha_1 p^{u_1}, \ldots, \alpha_{n-1} p^{u_{n-1}})$ with $\alpha_i \in \Z_p^\times$ and $0 \le u_1 \le \cdots \le u_{n-1} \in \Z$. For $\alpha_n \in \Z_p^\times$ and  $u_n \ge u_{n-1}$ we set
$N = N_0 \perp \langle \alpha_n  p^{u_n} \rangle$ and $N' = N_0 \perp \langle \alpha_{n}  p^{u_{n}+2} \rangle$. Then Katsurada found the inductive relation (\cite[Theorem 2.6]{Kat})
\begin{equation} \label{inductive_relation_Katsurada_1}
S_{n,p}(N',s) -p^{n+1-2s}S_{n,p}(N,s) = (1-p^{-s})(1+p^{1-s}) S_{n-1,p}(N_0,s-1). 
\end{equation}
The other pillar, the functional equation of Siegel series, is written as follows. Let $F_{n,p}(N,s)$ be the \emph{polynomial part} of Siegel series, that is removed suitable Euler factors of Riemann zeta functions or  a Dirichlet $L$-function from $S_{n,p}(N,s)$ (cf.\ \cite[Proposition 3.6]{Shi2}). Then Katsurada proved the functional equation of the form  (\cite[Theorem 3.2]{Kat})
\[ F_{n,p}(N,n+1-s) = R(N,s) p^{(s-\frac{n+1}{2})d_p(N)} F_{n,p}(N,s), \]
here $d_p(N) = \ord_p(\det N)$ and the remaining term $R(N,s)$  satisfies $R(N,s) = R(N',s)$.
Thus change the variable $s \mapsto n+1-s$ in (\ref{inductive_relation_Katsurada_1}) and apply functional equations, one has the relation of the form
\begin{equation} \label{inductive_relation_Katsurada_2}
S_{n,p}(N',s)_p-S_{n,p}(N,s) = H_n(N,s) S_{n-1,p}(N_0,s).
\end{equation}
Combining (\ref{inductive_relation_Katsurada_1}) and (\ref{inductive_relation_Katsurada_2}), Katsurada gave the recursion formula of Siegel series  of the form
\[ F_{n,p}(N,s) = C(N,s)F_{n-1,p}(N,s-1) + D(N,s)F_{n-1,p}(N,s) \]
(\cite[Theorem 4.1]{Kat}) and got an explicit formula of $F_{n,p}(N,s)$ .

\bigskip

Our interest is the Fourier coefficients of Siegel Eisenstein series with level and characters. The Euler $p$-factor for a prime $p$, that divides the level, is called the \emph{ramified Siegel series} and its behavior is quite different from the ordinary Siegel series. 
We consider the case that the level is  an odd prime $p$ and the character is the trivial character $\chi_0$ of modulo $p$ or the quadratic character $\chi_p$ of modulo $p$. Then the space of Siegel Eisenstein series is $(n+1)$-dimensional for degree $n$ case. We define $w_\nu \in \Sp(n,\Z)$ ($0 \le \nu \le n$) as in (\ref{def_w_nu}), that corresponds to each $0$-dimensional cusp. For $\psi = \chi_0$ or $\chi_p$, the Siegel Eisenstein series $E^n_{k,\psi}(w_\nu; Z,s)$ of weight $k$, character $\psi$ attached to $w_\nu$ is defined as in (\ref{def_Eisenstein_w_nu}).  The ramified Siegel series of $E^n_{k,\psi}(w_\nu;Z,s)$ at $N \in \Sym^n(\Z_p)^*$ is denoted by $S^{w_\nu}_n(\psi,N,s)$. Since $\{ E^n_{k,\psi}(w_\nu;Z,s) \}_{0 \le \nu \le n}$ form a basis of the space of Siegel Eisenstein series, our aim is to give a formula of $S^{w_\nu}_n(\psi,N,s)$ for all $\nu$.

After several concrete calculations for small degree case (cf.\ \cite{Mizno}, \cite{Ta}, \cite{Di}, \cite{Gu2}, \cite{Gu3}), the author (\cite{Gu1}) and Watanabe (\cite{Wat1}) independently have found formulas of ramified Siegel series  for general degree case, both results are based on the paper \cite{SH} by Sato and Hironaka. Here we call these results  Sato-Hironaka-type formula.
Note that the author computed ramified Siegel series only attached to cusp $w_0$, but Watanabe treated all the cusps in more general situation. Unfortunately however, these results are too complicated to handle directory. 
The reason is as follows.  Siegel series are defined  by an integral over $\Sym^n(\Q_p)$, the space of symmetric matrices over $\Q_p$. In \cite{SH} Sato and Hironaka computed the integral by decomposing $\Sym^n(\Q_p)$ to the orbits for the action of $\G \subset \GL_n(\Z_p)$, that is the Iwahori subgroup corresponding to the standard Borel subgroup. However the number of the orbits is so large. In order to compute $S^{w_\nu}_n(\psi,N,s)$, first we choose $\sigma$ in the symmetric group of degree $n$ such that $\sigma^2 = 1$, next we have to consider all the $\sigma$-stable partitions $\{1,2, \ldots, n \} = \bigcup_{i=0}^r I_i$ such that $\sum_{i=t}^n \sharp I_i = \nu$ for some $t$ ($0 \le t \le r+1$). The number of such partitions grows quite rapidly as $n$ increases. 
Therefore we need to find more useful formula, in order to compute the ramified Siegel series for higher degree case.

\bigskip

Recently there were two developments for our problem. One corresponds to the functional equations; the author found the functional equations of Siegel Eisenstein series of level $p$ in a simple form (\cite{Gu4}). For that we consider the $U(p)$-operator, that is a kind of Hecke operator, acting on the space of Siegel Eisenstein series. The eigenvalues are given by $p^{l(\nu,s)}$ ($0 \le \nu \le n$) with $l(\nu,s) = n(k-s)/2+s\nu -\nu(\nu+1)/2$. 
For each $\nu$, we denote the Siegel Eisenstein series, which is a $U(p)$-eigenfunction with eigenvalue $p^{l(s,\nu)}$ and suitably normalized, by $E^{(\nu)}_{k,\psi}(Z,s)$  (see (\ref{U(p)-eigen_Eisenstein})). Then there exists a functional equation between $E^{(\nu)}_{k,\psi}(Z,s)$ and $E^{(n-\nu)}_{k,\psi}(Z,n+1-s)$.
Hence if we write the ramified Siegel series of $E^{(\nu)}_{k,\psi}(Z,s)$  by $S^{(\nu)}_n(\psi,N,s)$ and call it \emph{$U(p)$-characteristic ramified Siegel series}, then we can write the functional equations between $S^{(\nu)}_n(\psi,N,s)$ and $S^{(n-\nu)}_n(\psi,N,n+1-s)$ explicitly. (Theorem \ref{thm_FE_ramified_Siegel_series}).

The other development, corresponding to inductive relations, is mainly due to Watanabe. We note that the inductive relation of ramified Siegel series for trivial character is easily deduced from the Katsurada's inductive relation of the ordinary Siegel series.
For the quadratic character case, Watanabe proved in \cite{Wat2} the inductive relation of ramified Siegel series similar to (\ref{inductive_relation_Katsurada_2});  he found
\begin{equation} \label{intro_inductive_formula_ramified}
S^{w_\nu}_n(\chi_p,N',s) - S^{w_\nu}_n(\chi_p,N,s) = H_n(\chi_p,N,s) S_{n-1}^{w_\nu}(\chi_p,N_0,s), 
\end{equation}
with a certain function $H_n(\chi_p, N,s)$, that is independent of $\nu$. We note that for the proof of the above formula, Watanabe used his Sato-Hironaka-type formula of the ramified Siegel series given in \cite{Wat1}.
By using (\ref{intro_inductive_formula_ramified}), Watanabe found the recursion formula of a certain ramified Siegel series, that is quite similar to Katsurada's recursion formula. However since $H_n(\chi_p,N,s)$ is independent of $\nu$, we see that (\ref{intro_inductive_formula_ramified}) also works for each $U(p)$-characteristic ramified Siegel series (see (\ref{inductive_relation_eigen-ramified})), thus together with functional equations we can get the recursion formula of $S^{(\nu)}_n(\chi_p,N,s)$ for all $\nu$.

\bigskip

Now we shall introduce the contents of this paper and state our main theorem. In section \ref{Sec_Siegel_Eisenstein} we study the transformation matrix between $\{ S^{w_\nu}_n(\chi_p,N,s) \}$ and $\{S^{(\nu)}_n(\chi_p,N,s) \}$ explicitly. We  write
\[  S^{w_\nu}_n(\psi,N,s) = \sum_{r = \nu}^n c_{\psi}(s)_{\nu r} \, S^{(r)}_{n}(\psi,N,s) \]
then if $r \equiv \nu \bmod 2$ then
\begin{align*} c_{\chi_0}(s)_{\nu r} & =  \frac{\displaystyle p^{\frac{(r-\nu)(r-\nu+2)}{4} -(r-\nu)s}(1-p^{\nu-s}) \prod_{k=\nu+1}^{r}(p^k-1)}{\displaystyle (1-p^{r-s}) \prod_{k=1}^{(r-\nu)/2} (1-p^{\nu+r-1+2k-2s})(p^{2k}-1)}, \\
c_{\chi_p}(s)_{\nu r} & = \frac{\displaystyle \chi_p(-1)^{\frac{r-\nu}{2}} p^{\frac{r-\nu}{2}(\frac{r-\nu}{2}-2s)} \prod_{k=\nu+1}^{r} (p^k-1)}{\displaystyle \prod_{k=1}^{(\nu-r)/2} \bigl(1-p^{\nu+r-1+2k-2s} \bigr)(p^{2k}-1)},
\end{align*}
if $r \not\equiv \nu \bmod 2$ then
\[c_{\chi_0}(s)_{\nu r}  = \frac{\displaystyle p^{\frac{(r-\nu)^2-1}{4}-(r-\nu)s} \prod_{k=r+1}^{\nu}(p^k-1)}{ \displaystyle (1-p^{r-s}) \prod_{k=1}^{(r-\nu-1)/2} (1-p^{\nu+r+2k-2s})(p^{2k}-1)} \]
and $c_{\chi_p}(s)_{\nu r}  = 0$ (Proposition \ref{prop_explicit_c_psi_ij}). In section \ref{Sec_functional_equation} we prove the functional equations of ramified Siegel series, using the functional equations of Siegel Eisenstein series. As a corollary we also find the explicit form of $S^{(0)}_n(\psi,N,s)$. 
In section \ref{Sec_recursion}, we show inductive relations using the result of Watanabe, consequently we can get the recursion formula of ramified Siegel series.

In order to state the main theorem we prepare some notations (see section \ref{Sec_functional_equation} for detail).
For $N \in \Sym^n(\Z_p)^*$ with $\det N \ne 0$, we put $D_N = (-4)^{[n/2]} \det N$ and $d_p(N) = \ord_p D_N$. In the case that $n$ is even, let $\chi_N$ be the quadratic or trivial character corresponding to the quadratic extension $\Q(\sqrt{D_N})/\Q$ and  $\chi_N^*$ the primitive character arising from $\chi_N \chi_p$. We set $a_N \in \{0, 1 \}$ so that $a_N \equiv d_p(N) \bmod 2$. On the other hand in the case that $n$ is odd, we write 
\[ \zeta_p(N) = h_p(N) \, \bigl( \det N, (-1)^{\frac{n-1}{2}} \det N \bigr)_p, \]
where $h_p(N)$ is the Hasse invariant of $N$ and $( \cdot \, , \, \cdot)_p$ is the Hilbert symbol. We define $D'_N$ by $D_N = p^{d_p(N)} D'_N$ and put $\eta_p(N) = \chi_p(D_N') \zeta_p(N)$. 
Now let $N = \diag(\alpha_1 p^{u_1}, \ldots, \alpha_{n} p^{u_{n}})$ with $\alpha_i \in \Z_p^\times$ and $0 \le u_1 \le \cdots \le u_n$. We set $N_0 = \diag(\alpha_1 p^{u_1}, \ldots, \alpha_{n-1} p^{u_{n-1}})$. 
We can calculate the function $H_n(\psi,N,s)$, that appears in our inductive relations of the type (\ref{inductive_relation_Katsurada_2}) or (\ref{intro_inductive_formula_ramified}) explicitly by using the formula of $S^{(0)}_n(\psi,N,s)$ proved in section \ref{Sec_functional_equation}.  For the trivial character case, if $n$ is even then 
\[ H_n(N,s)  = \zeta_p(N_0) p^{\frac{1}{2} d_p(N_0) + (\frac{n+1}{2}-s)(u_n - a_N + 2)} \frac{(1-p^{n-2s})(1-\chi_N(p) p^{s-\frac{n}{2} -1})}{1-\chi_N(p)p^{\frac{n}{2}-s}},\]
and if $n$ is odd then
\[ H_n(N,s)  = -\zeta_p(N) p^{\frac{1}{2} d_p(N) + (\frac{n}{2}-s)(u_n - a_{N_0} + 2)} \frac{(1-p^{n+1-2s})(1-\chi_{N_0}(p) p^{s-\frac{n-1}{2}})}{1-\chi_{N_0}(p)p^{\frac{n+1}{2}-s}}
\]
For the quadratic character case, if $n$ is even then 
\[ H_n(\chi_p, N,s)  =  \ve_p \eta_p(N_0) p^{\frac{1}{2} d_p(N_0) + (\frac{n+1}{2}-s)(u_n + a_N+1)} \dfrac{ (1-p^{n-2s})(1-\chi_N^*(p)p^{s-\frac{n}{2}-1})}{1-\chi_N^*(p)p^{\frac{n}{2}-s}},
\]
and if $n$ is odd then	
\[ H_n(\chi_p,N,s)  = -\ve_p \eta_p(N) p^{\frac{1}{2}d_p(N) + (\frac{n}{2}-s)(u_n + a_{N_0}+1 )} \dfrac{ (1-p^{n+1-2s})(1-\chi_{N_0}^*(p)p^{s-\frac{n-1}{2}})}{1-\chi_{N_0}^*(p)p^{\frac{n+1}{2}-s}}.
\]
We understand that $H_n(\psi,N,s) = H_n(N,s)$ for $\psi = \chi_0$ case. Under the above notation we have the main theorem.

\begin{theorem}[{Theorem \ref{thm_Main_theorem_1}, \ref{thm_Main_theorem_2}}] We have the following recursion formula of the $U(p)$-characteristic ramified Siegel series.
\[ S_n^{(\nu)}(\psi,N,s) = \frac{1-p^{\nu+1-2s}}{1-p^{n+1-2s}} S_{n-1}^{(\nu-1)}(\psi,N_0,s-1) - \frac{H_n(\psi,N,s)}{1-p^{n+1-2s}} S_{n-1}^{(\nu)}(\psi,N_0,s) . \]
\end{theorem}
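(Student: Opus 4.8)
The plan is to derive the recursion by combining two inductive relations and eliminating the common term $S^{(\nu)}_n(\psi,N',s)$. The first ingredient is already available: by (\ref{inductive_relation_eigen-ramified}), the Watanabe-type relation
\[ S^{(\nu)}_n(\psi,N',s) - S^{(\nu)}_n(\psi,N,s) = H_n(\psi,N,s)\, S^{(\nu)}_{n-1}(\psi,N_0,s) \]
holds for each $U(p)$-characteristic series and for both $\psi = \chi_0$ and $\psi = \chi_p$ (for the trivial character it descends from Katsurada's (\ref{inductive_relation_Katsurada_2}), for the quadratic character from (\ref{intro_inductive_formula_ramified})). Call this Relation B. The second ingredient is a Katsurada-type companion
\[ S^{(\nu)}_n(\psi,N',s) - p^{n+1-2s} S^{(\nu)}_n(\psi,N,s) = (1-p^{\nu+1-2s})\, S^{(\nu-1)}_{n-1}(\psi,N_0,s-1), \]
which I call Relation A (with the convention $S^{(-1)}_{n-1} = 0$). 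Granting both, subtracting Relation B from Relation A gives $(1-p^{n+1-2s}) S^{(\nu)}_n(\psi,N,s) = (1-p^{\nu+1-2s}) S^{(\nu-1)}_{n-1}(\psi,N_0,s-1) - H_n(\psi,N,s) S^{(\nu)}_{n-1}(\psi,N_0,s)$, and dividing by $1-p^{n+1-2s}$ yields the stated formula.

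The real work is to establish Relation A, and the route is through the functional equations of Theorem \ref{thm_FE_ramified_Siegel_series}. I would apply Relation B at the reflected index $n-\nu$ and variable $n+1-s$,
\[ S^{(n-\nu)}_n(\psi,N',n+1-s) - S^{(n-\nu)}_n(\psi,N,n+1-s) = H_n(\psi,N,n+1-s)\, S^{(n-\nu)}_{n-1}(\psi,N_0,n+1-s), \]
and then rewrite each of the three terms using the functional equation. The two degree-$n$ terms convert to $S^{(\nu)}_n(\psi,N',s)$ and $S^{(\nu)}_n(\psi,N,s)$; since $N$ and $N'$ differ only in that $d_p(N') = d_p(N)+2$, their functional-equation factors agree up to an explicit power of $p$, and this discrepancy is precisely what turns the coefficient of $S^{(\nu)}_n(\psi,N,s)$ into $p^{n+1-2s}$. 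The degree-$(n-1)$ term converts by the degree-$(n-1)$ functional equation, under which the index $n-\nu$ becomes $(n-1)-(n-\nu) = \nu-1$ and the variable $n+1-s$ becomes $(n-1)+1-(n+1-s) = s-1$, producing $S^{(\nu-1)}_{n-1}(\psi,N_0,s-1)$.

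The main obstacle is the bookkeeping in this last step: one must insert the explicit functional-equation factors (carrying $\chi_N$, $\chi_N^*$, $\zeta_p$, $\eta_p$, and the powers of $p$ in $d_p$, $u_n$, $a_N$, $a_{N_0}$) together with the explicit value of $H_n(\psi,N,n+1-s)$, and check that after clearing the common factors everything collapses to the single clean coefficient $1-p^{\nu+1-2s}$ on the right-hand side. Here the fact, inherited from Katsurada's $R(N,s) = R(N',s)$, that the ramified functional-equation factors for $N$ and $N'$ differ by exactly a power of $p$ is essential; it is what makes the two degree-$n$ contributions combine into $S^{(\nu)}_n(\psi,N',s) - p^{n+1-2s} S^{(\nu)}_n(\psi,N,s)$ rather than an intractable combination. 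Once Relation A is in hand the elimination in the first paragraph is purely formal, and the same argument covers $\psi = \chi_0$ and $\psi = \chi_p$ uniformly because Relation B and Theorem \ref{thm_FE_ramified_Siegel_series} are available for both.
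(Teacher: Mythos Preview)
Your proposal is correct and follows essentially the same route as the paper: start from the Watanabe/Katsurada-type relation (Relation~B) for the eigen-series, apply the functional equation of Theorem~\ref{thm_FE_ramified_Siegel_series} at the reflected index $n-\nu$ and variable $n+1-s$ to obtain the companion relation (Relation~A), and eliminate $S^{(\nu)}_n(\psi,N',s)$. The only point you glossed over is that for $\psi=\chi_0$ the passage from Katsurada's relation on the ordinary Siegel series to Relation~B on each $S^{(\nu)}_n$ requires the eigenvalue-separation argument (writing $E^n_k$ as a combination of the $E^{n,(\nu)}_{k,\chi_0}$ with all coefficients nonzero, then splitting by the action $N\mapsto pN$); the paper does this explicitly, but it is indeed routine once noted.
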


Finally in section \ref{Sec_example} we calculate examples for degree 2 and 3 case, that coincide with the previous results \cite{Gu2} and \cite{Gu3}.

\begin{acknowledgment} The author would thank to Masahiro Watanabe in Kyoto University for many useful discussions and comments.
\end{acknowledgment}

\begin{notation}
Throughout this paper $p$ stands for a fixed odd prime. An arbitrary prime number is denoted by $q$. We put $\chi_0$ the trivial Dirichlet character modulo $p$, and $\chi_p = (\frac{\cdot}{p})$  the quadratic Dirichlet character modulo $p$. 
For a commutative ring $A$ of characteristic zero, we put $\Sym^n(A)$ the set of symmetric matrices of size $n$ whose coefficients belong to $A$, and $\Sym^n(A)^*$ the set of half integral symmetric matrices, whose diagonal entries are contained in $A$ and off-diagonal entries are contained in $\frac{1}{2}A$.
We denote the zero-matrix of size $n$ by $0_n$ and  the identity matrix of size $n$ by $\bfI_n$.
Let $\Sp(n,\Z)$ be the symplectic modular group of rank $n$, matrix size $2n$ and $P_n(\Z)$ be its subgroup consist of the elements whose lower-left $(n \times n)$ blocks are zero matrices. We put
\[ \G_0^n(p) = \left\{ \begin{pmatrix} A & B \\ C & D \end{pmatrix} \in \Sp(n,\Z) \biggm|  C \equiv 0 \bmod p \right\}. \]
For $R \in M_n(\Q)$ we put $\epi(R) = \exp (2 \pi \sqrt{-1} \Tr(R))$. Similarly for $R \in M_n(\Q_q)$ we also write $\epi(R) = \exp (2 \pi \sqrt{-1} \varphi(\Tr(R)))$, here $\varphi$ is the  natural identity map $\varphi \colon \Q_q/\Z_q \simeq \bigcup_{n \ge 0} \frac{1}{q^n} \Z/\Z$. For a non-negative integer $r$ we set $\Gamma_r(s) = \pi^{r(r-1)/4} \prod_{j=0}^{r-1} \Gamma(s-j/2)$. We understand $\Gamma_0(s) = 1$.
\end{notation}

\section{Siegel Eisenstein series of level $p$} \label{Sec_Siegel_Eisenstein}

\subsection{Ramified Siegel series}

For $0 \le \nu \le n$, we put 
\begin{equation} \label{def_w_nu}
w_\nu = \left( \begin{array}{cc|cc} 0_{\nu} & & -\bfI_\nu & \\ & \bfI_{n-\nu} & & 0_{n-\nu} \\ \hline \bfI_\nu & & 0_\nu& \\  & 0_{n-\nu} & & \bfI_{n-\nu} \end{array} \right) \in \Sp(n,\Z). 
\end{equation}
Then $\{ w_\nu \}_{0 \le \nu \le n}$ form a representative set of $\G_0^n(p) \backslash \Sp(n,\Z)/ P_n(\Z)$. 
We regard a Dirichlet character $\psi$ modulo $p$ as a character of  $\G^n_0(p)$  by $\psi(\g) = \psi(\det D)$ for $\g = \begin{pmatrix} A & B \\ C& D \end{pmatrix} \in \G^n_0(p)$,  and define the function $\psi_{w_\nu}$ on  $P_{n}(\Z) w_\nu \G^n_0(p)$ by 
\[  \psi_{w_\nu}(\g) =\psi(\eta) \psi(\kappa), \quad  \g = \eta w_\nu \kappa, \ \eta \in P_{n}(\Z), \ \kappa \in \G_0^n(p). 
\]
Let $\ha_n$ be the Siegel upper half space. For $\g = \begin{pmatrix} A& B \\ C & D \end{pmatrix} \in \Sp(n,\Z)$ and $Z \in \ha_n$, we put $j(\g, Z) = \det (CZ+D)$. 

From now on we write $\psi = \chi_0$ or $\chi_p$. For an integer $k$ such that $\psi(-1) = (-1)^k$ and $s \in \C$ we define the Siegel Eisenstein series of level $p$, weight $k$ with character $\psi$ attached to cusp $w_\nu$ as follows. For $Z = X + \sqrt{-1}Y \in \ha_n$,
\begin{equation} \label{def_Eisenstein_w_nu}
 E^n_{k,\psi}(w_\nu;Z,s) = \det(Y)^{(s-k)/2}  \sum_{\g} \psi_{w_\nu}(\g) \,  j(\g, Z)^{-k} \, |j(\g,Z)|^{-s+k},
\end{equation}
here $\g$ runs through the set $P_n(\Z) \backslash P_n(\Z) w_\nu \G^n_0(p)$. The right hand side converges when $\re(s) > n+1$. 
For $k > n+1$, we simply write $E^n_{k,\psi}(w_\nu;Z) = E^n_{k,\psi}(w_\nu,Z,k)$, that is contained in the space of holomorphic Siegel modular forms.  In particular
\[ E^n_{k,\psi}(w_0;Z) = \sum_{ \begin{smatrix} A & B \\ C & D \end{smatrix} \in P_n(\Z) \backslash \G^n_0(p)} \psi(\det D) \det(CZ + D)^{-k} \]
is the usual Siegel Eisenstein series of degree $n$, level $p$ with character $\psi$.

Let 
\[ E^n_{k,\psi}(w_\nu;Z,s) = \sum_{N \in \Sym^n(\Z)^*} A_{\psi}^{w_\nu}(N;Y,s) \epi(NX) \]
be the Fourier expansion of $E^n_{k,\psi}(w_\nu;Z,s)$. For holomorphic Eisenstein series, we can write
\[ E^n_{k,\psi}(w_\nu;Z) = \sum_{\substack{N \in \Sym^n(\Z)^* \\ N \ge 0} } A_\psi^{w_\nu}(N,k) \epi(NZ). \]

Our aim is to write down the Fourier coefficient $A_\psi^{w_\nu}(N,k)$ explicitly for each $\nu$.  It is known that $A_{\psi}^{w_\nu}(N,k)$ has an Euler product expression. In order to explain it, we define the Siegel series as follows. For a prime number $q$, let
\[ \mathcal{M}_n(q) = \left\{ (C,D) \in M_{n,2n}(\Z_q) \biggm| \begin{array}{l} \text{$(C,D)$ is symmetric and co-prime,} \\ \text{$\det C = q^l$ for some $l \ge 0$} \end{array}\right\}. \]
Any $T \in \Sym^n(\Q_q)$ is written by $T = C^{-1} D$ with $(C,D) \in \mathcal{M}_n(q)$, unique up to the $\SL_n(\Z_q)$ action from the left. We put $\delta_q(T) = \det C$. Then the (ordinary) Siegel series at $q$ is defined by
\[ S_{n,q}(N,s)  = \sum_{R \in \Sym^n(\Q_q/\Z_q)} \delta_q(R)^{-s} \epi(RN). \]
Here we shortly denote $\Sym^n(\Q_q) \bmod \Sym^n(\Z_q)$ by $\Sym^n(\Q_q/\Z_q)$. Since this series is a rational function in $q^{-s}$, by abuse of language we also write it $S_{n,q}(N,q^{-s})$ or simply $S_n(N,q^{-s})$.

For the $p$-local factor, we need more notations. For $0 \le \nu \le n$ we put
\begin{align*}
\mathcal{M}_n^\nu(p) & = \{(C,D) \in \mathcal{M}_n(p) \mid \rank(C \bmod p) = \nu \}, \\
\Sym^n(\Q_p)^{(\nu)} & = \{R = C^{-1} D \in \Sym^n(\Q_p) \mid (C,D) \in \mathcal{M}_n^\nu(p) \}.
\end{align*}
Let $R = C^{-1} D \in \Sym^n(\Q_p)^{(\nu)}$. Then there exists $\g = \begin{smatrix} * & * \\ C & D \end{smatrix} \in P_n(\Z_p) w_\nu K_p$, here
\[ K_p = \Bigl\{ \begin{pmatrix} A& B \\ C & D \end{pmatrix} \in \Sp(n,\Z_p) \biggm| C \equiv 0 \bmod p \Bigr\}. \]
We set $\widetilde{\psi}_{w_\nu}(R) = \psi_{w_\nu}(\g)$.

\begin{definition}
For $N \in \Sym^n(\Q_p)^*$, we define the \emph{ramified Siegel series at $w_\nu$} by
\begin{align*}
S_n^{w_\nu}(\psi,N,s) & = \sum_{R \in \Sym^n(\Q_p/\Z_p)^{(\nu)}} \widetilde{\psi}_{w_\nu}(R) \delta_p(R)^{-s} \epi(NR).
\end{align*}
It is also a rational function in $p^{-s}$, we may write it as $S^{w_\nu}_n(\psi, N, p^{-s})$.
\end{definition}

The Fourier coefficients of Siegel Eisenstein series have Euler product expressions as follows.

\begin{proposition}[{cf.\ \cite[Proposition 3.1, Lemma 3.3]{Gu4}}] \label{prop_Fourier_exp_Eisenstein_level_p_1}
Let $N \in \Sym^n(\Z)^*$ and $\det N \ne 0$. Then the Fourier coefficient  of $E^n_{\psi,k}(w_\nu;Z,s)$ at $N$ is given by
\begin{align*} A_\psi^{w_\nu}(N;Y,s) & =  \det(Y)^{(s-k)/2} \Xi_n \Bigl(Y,N, \frac{s+k}{2},\frac{s-k}{2} \Bigr) \\
& \qquad \times  S_n^{w_\nu}(\psi, N,p^{-s}) \prod_{q \ne p} S_{n,q}(N, \psi(q)q^{-s}). 
\end{align*}
Here $\Xi_n(Y,N, \alpha, \beta)$ is the confluent hypergeometric function, investigated by Shimura \emph{\cite{Shi1}}.
\end{proposition}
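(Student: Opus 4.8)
The plan is to compute $A_\psi^{w_\nu}(N;Y,s)$ directly from the definition (\ref{def_Eisenstein_w_nu}) by the classical unfolding method of Siegel and Shimura: isolate the archimedean integral as a confluent hypergeometric function, and factor the remaining arithmetic sum into local Siegel series via the Chinese remainder theorem. First I would parametrize the cosets in $P_n(\Z)\backslash P_n(\Z)w_\nu\G_0^n(p)$ by the lower blocks $(C,D)$ of $\g$, which form coprime symmetric pairs taken modulo the left $\GL_n(\Z)$-action. Since $\det N\ne 0$, only the terms with $\det C\ne 0$ contribute to the $N$-th Fourier coefficient: the terms with $C$ of rank $<n$ give summands that are constant along some real directions, whose $N$-th coefficient vanishes for non-degenerate $N$. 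For the surviving terms I set $R=C^{-1}D\in\Sym^n(\Q)$, so that $CZ+D=C(Z+R)$ and, choosing $\det C>0$ representatives, $j(\g,Z)^{-k}|j(\g,Z)|^{-s+k}=|\det C|^{-s}\det(Z+R)^{-(s+k)/2}\det(\overline{Z+R})^{-(s-k)/2}$, using $\overline{W+\sqrt{-1}Y}=W-\sqrt{-1}Y$ for real symmetric $W$.

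Next I would extract the Fourier coefficient by integrating over $X$ in the torus $\Sym^n(\R)/\Sym^n(\Z)$ against $\epi(-NX)$ and unfold. The translation $Z\mapsto Z+S$ for $S\in\Sym^n(\Z)$ sends the $(C,D)$-term to the $(C,D+CS)$-term; hence for fixed $C$ the parameter $R=C^{-1}D$ decomposes as $R=R_0+T$ with $R_0$ ranging over $\Sym^n(\Q)/\Sym^n(\Z)$ (equivalently $D\bmod C\,\Sym^n(\Z)$) and $T\in\Sym^n(\Z)$. Combining the sum over the $\Sym^n(\Z)$-orbit with the torus integral unfolds the latter to an integral over all of $\Sym^n(\R)$, at the cost of the character factor $\epi(NR_0)$, which is well defined because $\Tr(NT)\in\Z$ for integral symmetric $T$.

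The integral that results, $\int_{\Sym^n(\R)}\det(W+\sqrt{-1}Y)^{-(s+k)/2}\det(W-\sqrt{-1}Y)^{-(s-k)/2}\epi(-NW)\,dW$, depends only on $Y$, $N$ and the two exponents, and is (up to the normalization of \cite{Shi1}) exactly Shimura's confluent hypergeometric function $\Xi_n(Y,N,\frac{s+k}{2},\frac{s-k}{2})$; together with the prefactor $\det(Y)^{(s-k)/2}$ from (\ref{def_Eisenstein_w_nu}) this produces the archimedean part of the claimed formula. The leftover arithmetic sum is $\sum_{(C,R_0)}\widetilde{\psi}_{w_\nu}(R_0)\,\delta(R_0)^{-s}\epi(NR_0)$ over coprime symmetric pairs modulo $\GL_n(\Z)$, where $\delta(R_0)=|\det C|$. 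I would then apply the Chinese remainder isomorphism $\Sym^n(\Q)/\Sym^n(\Z)\cong\bigoplus_q\Sym^n(\Q_q/\Z_q)$, under which $\delta$, the exponential $\epi(N\,\cdot\,)$, and the character all become multiplicative over the primes $q$. The factors at $q\ne p$ are unramified and reproduce $S_{n,q}(N,\psi(q)q^{-s})$, the character entering through $\psi(q)^{\ord_q\delta_q}$, while the factor at $p$, governed by the $w_\nu$-coset, is precisely the ramified Siegel series $S_n^{w_\nu}(\psi,N,p^{-s})$ by its definition.

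The step I expect to be the main obstacle is the bookkeeping at $p$: one must check that the global condition $\g\in P_n(\Z)w_\nu\G_0^n(p)$, read through the local decomposition, is equivalent to the condition $\rank(C\bmod p)=\nu$ defining $\Sym^n(\Q_p)^{(\nu)}$, together with the correct local character $\widetilde{\psi}_{w_\nu}$, so that the $p$-factor is exactly $S_n^{w_\nu}(\psi,N,p^{-s})$. Secondary technical points are the justification of the interchange of summation and integration (valid for $\re(s)$ large, then by meromorphic continuation since each factor is a rational function in the relevant $q^{-s}$) and the precise matching with the normalization of $\Xi_n$ in \cite{Shi1}; away from $p$ everything is the standard unramified computation.
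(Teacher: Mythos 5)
Your proposal is correct and is essentially the same argument as the paper's source for this statement: the paper gives no independent proof but cites \cite[Proposition 3.1, Lemma 3.3]{Gu4}, where Proposition 3.1 is exactly the classical unfolding (singular $C$ terms killed by $\det N \ne 0$, archimedean integral identified with Shimura's $\xi_n$, arithmetic sum over $\Sym^n(\Q/\Z)$ extracted) and Lemma 3.3 is the CRT factorization of the singular series into $\prod_{q\ne p} S_{n,q}(N,\psi(q)q^{-s})$ times the $p$-factor. The point you correctly flag as the main obstacle --- that $\g \in P_n(\Z)w_\nu\G_0^n(p)$ corresponds locally to $\rank(C \bmod p)=\nu$ with the character decomposing as $\widetilde{\psi}_{w_\nu}(R_p)\prod_{q\ne p}\psi(q)^{\ord_q \delta_q(R_q)}$ --- is precisely the content of the cited lemma, so your reconstruction matches the intended proof.
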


\begin{remark} In \cite{Shi1} the confluent hypergeometric function above is denoted by $\xi_n(Y,N, \alpha, \beta)$. We use the letter $\Xi$ instead of $\xi$ in order to avoid the confusion to the completed Riemann zeta function or the Dirichlet $L$-functions.
\end{remark}

In particular, by \cite[(4,34K), (4,35K)]{Shi1}, we have the following result.
\begin{corollary} Let $N \in \Sym^n(\Z)^*$ and $N> 0$. Then the Fourier coefficient $A^{w_\nu}_{\psi}(N,k)$ of $E^n_{k,\psi}(w_\nu;Z)$ is given by
\[ \frac{2^{\frac{-n(n-1)}{2}} (-2 \pi \sqrt{-1})^{nk} (\det N)^{k-\frac{n+1}{2}}}{\Gamma_n(k)}  S_n^{w_\nu}(\psi, N,p^{-k}) \prod_{q \ne p} S_{n,q}(N, \psi(q)q^{-k}). \]

\end{corollary}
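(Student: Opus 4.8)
The plan is to specialize the Fourier expansion of Proposition \ref{prop_Fourier_exp_Eisenstein_level_p_1} to the holomorphic point $s = k$ and then insert Shimura's explicit evaluation of the confluent hypergeometric function at the degenerate second parameter $\beta = (s-k)/2 = 0$. First I would set $s = k$ in the formula of Proposition \ref{prop_Fourier_exp_Eisenstein_level_p_1}. The prefactor $\det(Y)^{(s-k)/2}$ becomes $1$, and the two arguments of $\Xi_n$ collapse to $\bigl(\frac{s+k}{2}, \frac{s-k}{2}\bigr) = (k, 0)$, so the $N$-th Fourier coefficient of $E^n_{k,\psi}(w_\nu; Z, k)$, viewed as a function of $Y$, reads
\[ A_\psi^{w_\nu}(N; Y, k) = \Xi_n(Y, N, k, 0)\, S_n^{w_\nu}(\psi, N, p^{-k}) \prod_{q \ne p} S_{n,q}(N, \psi(q) q^{-k}). \]

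Next I would recall that the holomorphic Eisenstein series has the Fourier expansion $E^n_{k,\psi}(w_\nu; Z) = \sum_{N \ge 0} A_\psi^{w_\nu}(N, k)\, \epi(NZ)$, and that $\epi(NZ) = \epi(NX) \exp(-2\pi \Tr(NY))$ since $Z = X + \sqrt{-1}\,Y$. Comparing the coefficient of $\epi(NX)$ in the two expansions of the same function at $s=k$ yields, for every $N > 0$, the identity $A_\psi^{w_\nu}(N; Y, k) = A_\psi^{w_\nu}(N, k)\, \exp(-2\pi \Tr(NY))$. Thus it remains only to isolate the constant $A_\psi^{w_\nu}(N, k)$ by stripping the Gaussian factor $\exp(-2\pi\Tr(NY))$ off of $\Xi_n(Y, N, k, 0)$.

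The key input is Shimura's evaluation [Shi1, (4.34K), (4.35K)] of the confluent hypergeometric function at $\beta = 0$: for $N > 0$ one has $\Xi_n(Y, N, k, 0) = C_{n,k}\,(\det N)^{k - \frac{n+1}{2}}\, \exp(-2\pi \Tr(NY))$ with the explicit constant $C_{n,k} = 2^{-n(n-1)/2}\,(-2\pi \sqrt{-1})^{nk} / \Gamma_n(k)$, which is independent of $Y$. Substituting this into the displayed identity, the two exponential factors $\exp(-2\pi \Tr(NY))$ cancel, and one reads off directly
\[ A_\psi^{w_\nu}(N, k) = C_{n,k}\,(\det N)^{k-\frac{n+1}{2}}\, S_n^{w_\nu}(\psi, N, p^{-k}) \prod_{q \ne p} S_{n,q}(N, \psi(q) q^{-k}), \]
which is precisely the asserted formula once $C_{n,k}$ is written out.

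The only genuine obstacle is the precise bookkeeping of Shimura's normalization: one must check that the constant coming out of (4.34K)--(4.35K) agrees with $2^{-n(n-1)/2}(-2\pi\sqrt{-1})^{nk}/\Gamma_n(k)$ under our conventions for $\Gamma_n$, for the additive character $\epi$, and for the half-integral lattice $\Sym^n(\Z)^*$. The positivity hypothesis $N > 0$ is exactly what forces $\Xi_n(Y, N, k, 0)$ to degenerate into this purely holomorphic shape (a power of $\det N$ times a Gaussian), so that the resulting coefficient is $Y$-independent and holomorphic; no analysis beyond citing Shimura and tracking these constants is required.
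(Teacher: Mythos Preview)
Your proposal is correct and follows exactly the same route as the paper: the paper's entire argument is the single remark ``In particular, by \cite[(4.34K), (4.35K)]{Shi1}, we have the following result,'' and you have simply unpacked that citation by specializing Proposition~\ref{prop_Fourier_exp_Eisenstein_level_p_1} to $s=k$ and inserting Shimura's explicit evaluation of $\Xi_n(Y,N,k,0)$ for $N>0$. Your caveat about checking the normalization constants against the conventions for $\Gamma_n$, $\epi$, and $\Sym^n(\Z)^*$ is well placed but is indeed only bookkeeping.
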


An explicit formula of $S_{n,q}(N,q^{-s})$ is given by Katsurada (\cite{Kat}). Thus the remaining problem is to give an explicit formula of $S_n^{w_\nu}(\psi,N,s)$.

\subsection{$U(p)$-eigenfunction in the space of Siegel Eisenstein series}

Let 
\[ \mathcal{E}_{k,s}(\G^n_0(p),\psi)= \langle  E^{n}_{k,\psi}(w_\nu;Z,s) \mid 0 \le \nu \le n \rangle_\C \]
be the space of Siegel Eisenstein series. The Hecke operator $U(p)$ acts on this space as follows. For
\[ E(Z,s) = \sum_{N \in \Sym^n(\Z)^*} A(N;Y,s) \epi(NX) \in \mathcal{E}_{k,s}(\G^n_0(p),\psi), \]
we have
\[ U(p) E(Z,s) = \sum_{N \in \Sym^n(\Z)^*} A(pN;p^{-1}Y,s) \epi(NX). \]
For more precise explanation of $U(p)$, please refer \cite[section 2]{Gu4}. 

Eigenfunctions of the $U(p)$-operator play an important role in our argument. We quote necessarily properties from \cite{Gu4}.

We put
\begin{equation} l_n(s,\nu) = l(s,\nu) = \frac{n(k-s)}{2} + s\nu - \frac{\nu(\nu+1)}{2}. \label{eq_l(s,nu)}
\end{equation}
Then the eigenvalues of $U(p)$ on $\mathcal{E}_{k,s}(\G^n_0(p),\psi)$ are given by  $p^{l(s,\nu)}$ with $0 \le \nu \le n$ (\cite[Corollary 2.5]{Gu4}).

For $0 \le i,j \le n$ we define
\[ m_{\chi_0}(s)_{ij}  = 
 \begin{cases} \dfrac{\displaystyle p^{si -j(j+1)/2 + (j-i)(j-i+2)/4}  \prod_{r=i+1}^j(p^r-1)}{\displaystyle \prod_{r=1}^{(j-i)/2} (p^{2r}-1)}  &  \begin{array}{l} \text{if $i \le j$}, \\ \text{$j -i$ is even,} \end{array} \\[20pt]
\dfrac{ \displaystyle p^{si -j(j+1)/2 + ((j-i)^2-1)/4}\prod_{r=i+1}^j(p^r-1)}{\displaystyle \prod_{r=1}^{(j-i-1)/2} (p^{2r}-1)} &
\begin{array}{l} \text{if $i \le j$}, \\ \text{$j-i$ is odd}, \end{array} \\[20pt]
0 & \text{if $i>j$.} \end{cases} 
\]
\[ m_{\chi_p}(s)_{ij}  = \begin{cases} \dfrac{\displaystyle \chi_p(-1)^{(j-i)/2}p^{si-j(j+1)/2 + (j-i)^2/4} \prod_{k=i+1}^j(p^k-1)}{\displaystyle \prod_{k=1}^{(j-i)/2} (p^{2k}-1)} & \begin{array}{l} \text{if $i \le j$}, \\ \text{$j-i$ is even,} \end{array} \\[20pt]
 0 & \text{otherwise}. \end{cases} \]
Then for $\psi = \chi_0$ or $\chi_p$,
\[ U(p)E^n_{k,\psi}(w_i;Z,s) = p^{n(k-s)/2} \sum_{j=i}^n m_{\psi}(s)_{ij} E^n_{k,\psi}(w_j;Z,s). \]

In particular the representative matrix of $U(p)$ is given by a triangular matrix. In order to find eigenfunctions,
we define $b_{\chi_p}(s)_{ij}$ for $0 \le i,j \le n$  inductively as follows.
\begin{align}
&\text{if $i>j$} &   b_{\psi}(s)_{ij} &= 0 \notag \\ 
&\text{if $i=j$} &  b_{\psi}(s)_{ii} &= 1 \label{inductive_b_psi}    \\
&\text{if $i<j$} &  b_{\psi}(s)_{ij} & = \frac{p^{-js + j(j+1)/2}}{p^{(j-i)((j+i+1)/2-s)}-1} \sum_{k=i}^{j-1} m_{\psi}(s)_{kj} b_{\psi}(s)_{ik}. \notag
\end{align}
Note that $b_{\chi_p}(s)_{ij} = 0$ if $i \not\equiv j \bmod 2$.
Then
\begin{equation} \label{U(p)-eigen_Eisenstein}
E^{n,(\nu)}_{k,\psi}(Z,s) = E^{n}_{k,\psi}(w_\nu;Z,s) + \sum_{r= \nu +1}^n b_{\psi}(s)_{\nu r} \, E^n_{k,\psi}(w_r;Z,s) 
\end{equation}
is the eigenfunction of the $U(p)$-operator with eigenvalue $p^{l(s,\nu)}$ (\cite[Proposition 2.10]{Gu4}).

Let $B_{\psi}(s) = \bigl( b_{\psi}(s)_{ij} \bigr)_{0 \le i,j \le n}$ be the $((n+1) \times (n+1))$-matrix and  we denote the $(i,j)$-component of its inverse matrix $B_{\psi}(s)^{-1}$ by $c_{\psi}(s)_{ij}$.  Thus we have
\[ E^n_{k,\psi}(w_\nu;Z,s) = E^{n,(\nu)}_{k,\psi}(Z,s) + \sum_{ r=\nu + 1 }^n c_{\psi}(s)_{\nu r} \, E^{n,(r)}_{k,\psi}(Z,s). \]
Note that for the quadratic character case, $c_{\chi_p}(s)_{\nu r} = 0$ unless $\nu \equiv r \bmod 2$.

\begin{remark}
By \cite[Remark after Lemma 3.5]{Gu4} we have
\[ c_{\psi}(s)_{ij} = S^{w_i}_j(\psi,0_j,s), \]
that is, $c_{\psi}(s)_{ij}$ is the ramified Siegel series of degree $j$ with respect to $w_i$ for the zero-matrix. 
\end{remark}

First we shall show the explicit formulas of $b_{\psi}(s)_{ij}$ and $c_\psi(s)_{ij}$. For that we need the following lemma.

\begin{lemma} \label{lem_product_sum_2} Let $x$ and $y$ are indeterminates. Then we have
\[ \sum_{t=1}^r \prod_{k=1}^t \frac{(x^{k-1}y-1)(x^{r+1-k}-1)}{(x^{r-k}-y)(x^k-1)} = -1. \]
\end{lemma}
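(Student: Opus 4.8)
The plan is to recognize the left-hand side as a terminating basic hypergeometric series and evaluate it by the $q$-Chu--Vandermonde summation. Write $q = x$, let $T_t = \prod_{k=1}^t \frac{(x^{k-1}y-1)(x^{r+1-k}-1)}{(x^{r-k}-y)(x^k-1)}$ denote the $t$-th summand (so $T_0 = 1$, the empty product), and note the target is $\sum_{t=1}^r T_t = -1$, equivalently $\sum_{t=0}^r T_t = 0$. First I would compute the term ratio
\[ \frac{T_t}{T_{t-1}} = \frac{(x^{t-1}y-1)(x^{r+1-t}-1)}{(x^{r-t}-y)(x^t-1)} = \frac{(1-yx^{t-1})(1-x^{-r}x^{t-1})}{(1-yx^{1-r}x^{t-1})(1-x^t)}\,x, \]
where the second equality follows by factoring $x^{r+1-t}-1 = x^{r+1-t}(1-x^{-r}x^{t-1})$ and $x^{r-t}-y = x^{r-t}(1-yx^{1-r}x^{t-1})$ and cancelling the power $x^{r+1-t}/x^{r-t}=x$. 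This is precisely the term ratio of ${}_{2}\phi_{1}(a,b;c;q,z)$ with $q=x$, $a=y$, $b=x^{-r}$, $c=yx^{1-r}$, $z=x$, so that $\sum_{t=0}^r T_t = {}_{2}\phi_{1}(y,x^{-r};yx^{1-r};x,x)$, the sum terminating after $t=r$ because $b=x^{-r}=q^{-r}$.

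Next I would verify the balance condition that makes a summation formula available: here $c/(ab) = yx^{1-r}/(y\,x^{-r}) = x = z$, which is exactly the $q$-Gauss condition, and since the series terminates this is the setting of the $q$-Chu--Vandermonde formula
\[ {}_{2}\phi_{1}(a,q^{-n};c;q,q) = \frac{(c/a;q)_n}{(c;q)_n}\,a^n, \]
(Gasper--Rahman, identity (II.6)), where $(\alpha;q)_n = \prod_{i=0}^{n-1}(1-\alpha q^i)$. Specializing $a=y$, $n=r$, $c=yx^{1-r}$, $q=x$ gives $c/a = x^{1-r}$, hence
\[ \sum_{t=0}^r T_t = y^{r}\,\frac{(x^{1-r};x)_r}{(yx^{1-r};x)_r}. \]

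The decisive observation is then that the numerator $q$-Pochhammer vanishes: $(x^{1-r};x)_r = \prod_{i=0}^{r-1}(1-x^{1-r+i})$ contains the factor at $i=r-1$, namely $1-x^{0}=0$. Therefore $\sum_{t=0}^r T_t = 0$, and since $T_0 = 1$ this yields $\sum_{t=1}^r T_t = -1$, as claimed. I would close by noting that all these manipulations are identities of rational functions in $\Q(x,y)$ (the denominator $(yx^{1-r};x)_r$ is not the zero polynomial), so the indeterminate statement follows directly. The only genuine care-point, rather than a real obstacle, is the bookkeeping of the shifted powers in converting the three factors to standard $q$-Pochhammer form and confirming that the terminating index and the balance $z=c/(ab)$ both hold; should one prefer a self-contained argument, the same identity can be proved by inducting on $r$ through the contiguous relation used to establish $q$-Chu--Vandermonde itself, which amounts to the same computation.
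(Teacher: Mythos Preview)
Your argument is correct: the term-ratio computation is accurate, the identification with ${}_2\phi_1(y,x^{-r};yx^{1-r};x,x)$ is sound, and the vanishing of $(x^{1-r};x)_r$ kills the right-hand side of $q$-Chu--Vandermonde as you say. The paper, however, takes a different and entirely elementary route. It sets $G_k$ equal to the $k$-th factor and introduces an auxiliary $H_k = \dfrac{(x^{k-1}y-1)(x^{r+1-k}-1)}{(1-y)(x^r-1)}$, checks the single algebraic identity $G_{k-1}(1+H_k)=H_{k-1}$ together with $G_r=H_r$, and then telescopes the partial sums down to $H_1=-1$. The paper's proof is self-contained and needs no outside reference; your proof is shorter once one is willing to quote Gasper--Rahman, and it places the identity in the standard $q$-series framework. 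It is worth noting that the paper itself invokes exactly this $q$-Chu--Vandermonde formula in a remark---but only for the companion Lemma~\ref{lem_product_sum_1}, not for the present statement---so your approach is very much in the spirit of what the authors already acknowledge, just applied one lemma earlier.
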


\begin{proof}
For $1 \le k \le r$, let
\[ G_k = \dfrac{(x^{k-1}y-1)(x^{r+1-k}-1)}{(x^{r-k}-y)(x^k-1)} \quad \text{and} \quad  H_k = \dfrac{(x^{k-1}y-1)(x^{r+1-k}-1)}{(1-y)(x^r-1)}. \]
Then $G_r = H_r$ and
\[ 1 + H_k = \frac{(x^{r+1-k}-y)(x^{k-1}-1)}{(1-y)(x^r-1)}, \]
thus $G_{k-1}(1+H_k) = H_{k-1}$. It shows
\[ \prod_{k=1}^{r-1} G_k + \prod_{k=1}^r G_k = \left( \prod_{k=1}^{r-2} G_k \right) G_{r-1} ( 1 + H_r) = \left( \prod_{k=1}^{r-2} G_k \right) H_{r-1}. \]
Repeating the same argument we conclude
$\sum_{t=1}^r \prod_{k=1}^t G_r = H_1 = -1$,	
that proves our assertion.
\end{proof}

\begin{lemma} \label{lem_product_sum_1}
For indeterminates $x$ and $y$ we have
\[ \sum_{t=1}^r \prod_{k=1}^t \frac{(y-x^{k-1})(x^{r+1-k}-1)}{x^k-1} = y^r-1. \]

\end{lemma}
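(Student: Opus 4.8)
The plan is to recognize each summand as a product of a Gaussian binomial coefficient and a $q$-shifted factorial, whereupon the whole identity becomes the $q$-binomial expansion of $y^r$. First I would split the summand as
\[ \prod_{k=1}^t \frac{(y-x^{k-1})(x^{r+1-k}-1)}{x^k-1} = \left( \prod_{j=0}^{t-1}(y-x^j) \right) \cdot \frac{\prod_{i=r+1-t}^{r}(x^i-1)}{\prod_{k=1}^t (x^k-1)}, \]
and observe that the second factor is the Gaussian binomial coefficient $\binom{r}{t}_x$, while the first is the $x$-analogue of a falling factorial. Writing $[y]_t = \prod_{j=0}^{t-1}(y - x^j)$, so that $[y]_0 = 1$, the claim is equivalent to
\[ \sum_{t=0}^{r} \binom{r}{t}_x [y]_t = y^r, \]
because the $t=0$ contribution is precisely the constant $1$ that appears subtracted on the right-hand side of the stated identity.

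To prove this reformulation I would induct on $r$. The base case $r=0$ (or $r=1$) is immediate. For the inductive step I multiply the identity for $r-1$ by $y$ and use the two elementary recurrences
\[ y\,[y]_t = [y]_{t+1} + x^t [y]_t, \qquad \binom{r}{t}_x = \binom{r-1}{t-1}_x + x^t \binom{r-1}{t}_x, \]
the first being clear from $y = (y - x^t) + x^t$, the second being the $q$-Pascal rule. After a single shift of the summation index in the term carrying $[y]_{t+1}$, these two recurrences combine the expression $y \sum_{t} \binom{r-1}{t}_x [y]_t$ into $\sum_{t} \bigl( \binom{r-1}{t-1}_x + x^t \binom{r-1}{t}_x \bigr)[y]_t = \sum_{t} \binom{r}{t}_x [y]_t$, completing the induction.

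The only delicate point is the bookkeeping of the index shift together with the correct form of the $q$-Pascal rule: there are two versions differing by where the factor $x^t$ sits, and one must select the version matched to the recurrence for $[y]_t$ so that the two sums align. Alternatively, one can avoid the $q$-binomial machinery and mimic the telescoping proof of Lemma \ref{lem_product_sum_2}: setting $G_k = \frac{(y-x^{k-1})(x^{r+1-k}-1)}{x^k-1}$ and seeking auxiliary quantities $H_k$ with $H_r = G_r$ and $G_{k-1}(1+H_k) = H_{k-1}$, the partial sums collapse to give $\sum_{t=1}^r \prod_{k=1}^t G_k = H_1$, after which a direct computation yields $H_1 = y^r - 1$. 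I would present the $q$-binomial argument as the main proof, since it is shorter and explains transparently why the closed form is exactly $y^r - 1$.
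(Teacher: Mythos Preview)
Your main argument is correct. Recognizing the summand as $\binom{r}{t}_x [y]_t$ and proving $\sum_{t=0}^r \binom{r}{t}_x [y]_t = y^r$ by induction on $r$ via the $q$-Pascal rule $\binom{r}{t}_x = \binom{r-1}{t-1}_x + x^t\binom{r-1}{t}_x$ works cleanly; the index shift and boundary terms $t=0$, $t=r$ behave exactly as you describe.

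This is, however, not the route the paper takes in its proof. The paper instead observes that both sides are polynomials in $y$ of degree $r$, so it suffices to check the identity at $y = x^n$ for all $n \ge 0$; it then inducts on $n$, using a decomposition $F_t(x^n,r) = (x^r-1)x^{n-1}F_{t-1}(x^{n-1},r-1) + F_t(x^{n-1},r)$ to reduce to the inductive hypothesis for $n-1$. That argument is more elementary in the sense that it avoids naming Gaussian binomials, but it is also more opaque about \emph{why} the closed form is $y^r$. Your approach makes this transparent, and in fact coincides conceptually with the Remark the paper places immediately after the proof, where the identity is derived from the $q$-Chu-Vandermonde formula; you are essentially giving a self-contained proof of the relevant special case rather than citing it.

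One caution: the telescoping alternative you sketch at the end, mimicking the proof of Lemma~\ref{lem_product_sum_2}, is not obviously straightforward. In that lemma the auxiliary $H_k$ has a simple closed form whose $1+H_k$ factors nicely against $G_{k-1}$; for the present $G_k$ the analogous $H_k$ satisfying $G_{k-1}(1+H_k)=H_{k-1}$ would have to encode a polynomial of degree $r$ in $y$, and ``a direct computation yields $H_1 = y^r-1$'' is doing a lot of unexamined work. Since you present this only as a side remark, it does no harm, but I would either drop it or actually exhibit the $H_k$.
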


\begin{proof} We write $\displaystyle F_t(y,r) = \prod_{k=1}^t \dfrac{(y-x^{k-1})(x^{r+1-k}-1)}{x^k-1}$. Since $\sum_{t=1}^r F_t(y,r)$ is a polynomial in $y$ of degree $r$, it suffices to show that 
\[ \sum_{t=1}^r F_t(x^n,r) = x^{nr}-1, \quad  \text{for all $n \in \Z_{\ge 0}$}. \]
Note that $F_t(x^n,r) = 0$ for $t>n$. We prove the above assertion by induction on $n$. The case of $n=0$; $\sum_{t=1}^r F_t(1,r) = 0$ is trivial. For $n \ge 1$, we rewrite the numerator of $F_t(x^n,r)$ as 
\begin{align*}  & (x^r-1)(x^n-1)  x^{t-1} \prod_{k=2}^t (x^{n-1}-x^{k-2})(x^{r+1-k}-1) \\
& = (x^r-1)(x^n-1) x^{t-1} \prod_{k=1}^{t-1} (x^{n-1}-x^{k-1})(x^{r-k}-1).
\end{align*}
Decompose $x^n-1 = x^{n-t}(x^t-1) + (x^{n-t}-1)$, we have
\begin{align*}
F_t(x^n,r) & = (x^r-1)x^{n-1} \prod_{k=1}^{t-1} \frac{(x^{n-1}-x^{k-1})(x^{r-k}-1)}{x^k-1} \\
& + (x^r-1)(x^{n-1}-x^{t-1}) \frac{\displaystyle  \prod_{k=1}^{t-1} (x^{n-1}-x^{k-1})(x^{r-k}-1)}{\displaystyle \prod_{k=1}^t (x^k-1)} \\
& = (x^r-1)x^{n-1} F_{t-1}(x^{n-1}, r-1) + F_t(x^{n-1}, r).
\end{align*} 
Here we understand $F_0(y,r-1) = 1$. Then by the induction hypothesis we have
\begin{align*}
\sum_{t=1}^r F_t(x^n,r) & = (x^r-1)x^{n-1} \Bigl(1+ \sum_{t=2}^r F_{t-1}(x^{n-1},r-1) \Bigr) + \sum_{t=1}^r F_t(x^{n-1},r) \\
& = x^{nr}-1,
\end{align*}
thus the lemma is proved.
\end{proof}

\begin{remark} This lemma is an easy consequence of $q$-analogue of Chu-Vandermonde theorem, that is pointed out by Watanabe. We define the $q$-Pochhammer symbol $(a;q)_n$ by $(a;q)_n = \prod_{i=1}^n(1-aq^{i-1})$. The $q$-hypergeometric series is defined as
\[ \leftidx_{2}{\phi}_1 \left( \! \! \begin{array}{c} a, b \\ c \end{array}; q, z \right) = \sum_{k=0}^\infty \frac{(a;q)_k \, (b;q)_k}{(c;q)_k \,(q;q)_k} z^k. \]
Then the theorem of $q$-Chu-Vandermonde says that
\begin{equation} \label{q-Chu-Vandermonde}
 \leftidx_{2}{\phi}_1 \left( \! \! \begin{array}{l} a, q^{-n} \\ \ \, c \end{array} \! \! ; q,q \right) = \frac{(c/a;q)_n}{(c;q)_n} a^n 
\end{equation}
holds (\cite[(1.5.3)]{GR}). We can rewrite the formula of Lemma \ref{lem_product_sum_1} as
\[ \sum_{t=0}^r \frac{(y;x^{-1})_t \, (x^r;x^{-1})_t}{(x^{-1}, x^{-1})_t} (x^{-1})^t = y^r. \]
Substituting $a=y$, $q = x^{-1}$ and $c=0$ in (\ref{q-Chu-Vandermonde}), we get our lemma. \qed
\end{remark}

\begin{proposition} \label{prop_explicit_b_psi} 

\begin{enumerate}[$(1)$]
\item Let $\psi = \chi_0$. For $i \le j$ we have
\[ b_{\chi_0}(s)_{ij}  = \begin{cases} \frac{\displaystyle (-1)^{(j-i)/2} p^{-(j-i)s} (1-p^{j+1-s}) \prod_{k=i+1}^{j} (p^k-1)}{\displaystyle (1-p^{i+1-s})  \prod_{k=1}^{(j-i)/2} (1-p^{2i+1+2k-2s})(p^{2k}-1)} & \text{if $i \equiv j \bmod 2$,} \\
\frac{\displaystyle (-1)^{(j-i+1)/2} p^{-(j-i)s} \prod_{k=i+1}^{j} (p^k-1)}{\displaystyle (1-p^{i+1-s})\prod_{k=1}^{(j-i-1)/2}(1-p^{2i+1+2k-2s})(p^{2k}-1)} & \text{if $i \not\equiv j \bmod 2$.} \end{cases}
 \]

\item Let $\psi = \chi_p$. For $i \le j$ and $i \equiv j \bmod 2$ we have
\begin{equation*} 
b_{\chi_p}(s)_{ij} = \frac{\displaystyle \bigl( -\chi_p(-1) \bigr)^{\frac{j-i}{2}} p^{\frac{j-i}{2}(1-2s)} \prod_{k=i+1}^{j}(p^k-1)}{ \displaystyle \prod_{k=1}^{(j-i)/2} (1-p^{2i+1+2k-2s})(p^{2k}-1)}.
\end{equation*}

\end{enumerate}
\end{proposition}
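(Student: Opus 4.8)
The natural approach is induction on $j-i$, driven by the inductive definition (\ref{inductive_b_psi}) of $b_\psi(s)_{ij}$, with the two summation identities of Lemma \ref{lem_product_sum_1} and Lemma \ref{lem_product_sum_2} as the engine that evaluates the resulting sums in closed form. For fixed $i$ I would induct on $j\ge i$. The base case $j=i$ is immediate, since $b_\psi(s)_{ii}=1$ agrees with the claimed formula (all products being empty). For the inductive step I would substitute the explicit expressions for $m_\psi(s)_{kj}$ and, via the induction hypothesis, for $b_\psi(s)_{ik}$ with $i\le k<j$, into the right-hand side of (\ref{inductive_b_psi}). The crucial simplification is that the leading product $\prod_{l=i+1}^k(p^l-1)$ coming from $b_\psi(s)_{ik}$ and the product $\prod_{l=k+1}^j(p^l-1)$ coming from $m_\psi(s)_{kj}$ combine into $\prod_{l=i+1}^j(p^l-1)$, which is independent of the summation index $k$ and factors out of the sum. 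The sign and the power of $p$ split similarly into a $k$-independent part and a part depending on $k$ in the shape prescribed by the lemmas.

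I would treat $\psi=\chi_p$ first, as the cleaner case. The parity constraints force $m_{\chi_p}(s)_{kj}=0$ unless $j\equiv k$ and $b_{\chi_p}(s)_{ik}=0$ unless $k\equiv i$, so only the indices $k\equiv i\equiv j\bmod 2$ survive. Reindexing by $a=(k-i)/2$ and putting $x=p^2$, $y=p^{\,i+j+1-2s}$, $r=(j-i)/2$, the inner sum matches the left-hand side of Lemma \ref{lem_product_sum_1}: up to the reindexing $l\mapsto r+1-k$ the factors $(y-x^{k-1})$ reproduce the terms $1-p^{2i+1+2l-2s}$, while $(x^{r+1-k}-1)$ and $(x^k-1)$ reproduce the terms $p^{2l}-1$. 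Lemma \ref{lem_product_sum_1} then evaluates the sum to $y^r-1=p^{(j-i)((j+i+1)/2-s)}-1$, which is exactly the denominator of the prefactor in (\ref{inductive_b_psi}); the two cancel, and collecting the $k$-independent factors yields the asserted formula for $b_{\chi_p}(s)_{ij}$.

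The trivial character $\psi=\chi_0$ is the main obstacle, precisely because there is no parity restriction: the recursion for $b_{\chi_0}(s)_{ij}$ genuinely couples both parity classes, as every $b_{\chi_0}(s)_{ik}$ with $i\le k<j$ enters and both branches of $m_{\chi_0}(s)_{kj}$ contribute. A first useful observation is that the factor $1-p^{i+1-s}$ in the claimed denominator depends only on $i$, hence is common to all the $b_{\chi_0}(s)_{ik}$ in the sum and factors straight out by the induction hypothesis. I would then split the remaining, genuinely $k$-dependent sum according to the parity of $k-i$, reindex each part with $x=p^2$, and evaluate the two parts separately: one by Lemma \ref{lem_product_sum_1}, again producing a $y^r-1$ that cancels the prefactor denominator, and the other by Lemma \ref{lem_product_sum_2}, whose value $-1$ and whose denominator factors $(x^{r-k}-y)$ contribute the remaining structure. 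Recombining the two branches then produces the numerator factor $1-p^{j+1-s}$ in the even case and its absence in the odd case, together with the correct signs $(-1)^{(j-i)/2}$ and $(-1)^{(j-i+1)/2}$.

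The principal difficulty throughout is the exponent bookkeeping rather than any conceptual point. The delicate steps are: verifying that the power of $p$ pulled out of $m_\psi(s)_{kj}$ and $b_\psi(s)_{ik}$, combined with the power contributed by the lemma, collapses to exactly the exponent $-(j-i)s$ (respectively $\tfrac{j-i}{2}(1-2s)$) demanded by the statement; checking that the reindexing aligns the summation ranges with the nested products $\prod_{k=1}^t$ in the lemmas; and, for $\chi_0$, confirming that the two parity sub-sums recombine into a single closed form covering both the even and the odd case. I expect the $\chi_0$ recombination to be the most laborious part, while $\chi_p$ should fall out essentially as the even, parity-restricted specialization of the same computation.
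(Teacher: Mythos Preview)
Your overall strategy---verify the inductive definition (\ref{inductive_b_psi}) by substituting the claimed formulas and reducing to a product-sum identity in $x=p^2$---is exactly what the paper does, and your treatment of $\chi_p$ via Lemma~\ref{lem_product_sum_1} with $y=p^{2i+2r+1-2s}$ is correct and matches the paper's computation.

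For $\chi_0$, however, your plan diverges from the paper and is more complicated than you suggest. You propose to split the sum by the parity of $k-i$ and evaluate the two sub-sums separately, one by Lemma~\ref{lem_product_sum_1} and one by Lemma~\ref{lem_product_sum_2}. The difficulty is that the even-parity terms $m_{\chi_0}(s)_{i+2t,j}\,b_{\chi_0}(s)_{i,i+2t}$ carry the extra $t$-dependent numerator factor $(1-p^{i+2t+1-s})$ coming from the claimed formula for $b_{\chi_0}$; this factor does not fit the shape of either lemma directly, so neither sub-sum is immediately of the required form. The paper instead \emph{pairs} the term at $k=i+2t$ with the term at $k=i+2t+1$ first: a one-line identity
\[
(1-p^{i+2t+1-s}) - p^{2t-2r}(p^{2r-2t}-1) = p^{2t-2r}(1-p^{i+2r+1-s})
\]
collapses the two into a single summand in which the factor $(1-p^{j+1-s})$ is now \emph{independent of $t$} and can be pulled out. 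What remains is a single sum that matches Lemma~\ref{lem_product_sum_1} with $y=p^{2i+2r+1-2s}$, yielding $y^r-1$ and cancelling the prefactor. In particular the paper does \emph{not} use Lemma~\ref{lem_product_sum_2} in this proof at all; that lemma is reserved for Proposition~\ref{prop_explicit_c_psi_ij}. Your parity-split approach can presumably be pushed through with more work, but the pairing trick is both shorter and explains transparently where the numerator $1-p^{j+1-s}$ comes from.
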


\begin{proof}
We only explain the assertion (1), since (2) can be proved similarly. We rewrite our assertion as
\begin{align} 
b_{\chi_0}(s)_{i,i+2r} & = \frac{\displaystyle (-1)^r p^{-2rs} (1-p^{i+2r+1-s}) \prod_{k=i+1}^{i+2r} (p^k-1)}{\displaystyle (1-p^{i+1-s}) \prod_{k=1}^r (1-p^{2i+1+2k-2s})(p^{2k}-1)},  \label{rewrite_b_chi_0_even}\\
b_{\chi_0}(s)_{i,i+2r+1} & = \frac{\displaystyle (-1)^{r+1} p^{-(2r+1)s} \prod_{k=i+1}^{i+2r+1} (p^k-1)}{\displaystyle (1-p^{i+1-s})\prod_{k=1}^r(1-p^{2i+1+2k-2s})(p^{2k}-1)}. \label{rewrite_b_chi_0_odd}
\end{align}
It suffices to check that (\ref{rewrite_b_chi_0_even}) and (\ref{rewrite_b_chi_0_odd}) satisfies the inductive formula (\ref{inductive_b_psi}). For (\ref{rewrite_b_chi_0_even}) we have to show
\begin{align}
& b_{\chi_0}(s)_{i,i+2r}  = \frac{p^{-s(i+2r) + (i+2r)(i+2r+1)/2}}{p^{r(2i+2r+1)-2sr}-1} \label{compute_b_chi_0-1}\\ 
& \quad \times \sum_{t=0}^{r-1} \bigl( m_{\chi_0}(s)_{i+2t,i+2r} \, b_{\chi_0}(s)_{i,i+2t} +  m_{\chi_0}(s)_{i+2t+1,i+2r} \, b_{\chi_0}(s)_{i,i+2t+1} \bigr). \notag
\end{align}
In the right hand side, we have
\[ 
m_{\chi_0}(s)_{i+2t,i+2r} \, b_{\chi_0}(s)_{i,i+2t} \\   = p^{\la}(1-p^{i+2t+1-s}) R(t) \]
with $\la = si-(i+2r)(i+2r+1)/2 + (r-t)(r-t+1)$,
\[ R(t)= \frac{ \displaystyle (-1)^t  \prod_{k=i+1}^{i+2r}(p^k-1)}{\displaystyle (1-p^{i+1-s}) \Biggl( \prod_{k=1}^t (1-p^{2i+1+2k-s})(p^{2k}-1) \Biggr) \prod_{k=1}^{r-t}(p^k-1)}. \]
Similarly we have
\[ m_{\chi_0}(s)_{i+2t+1,i+2r} \, b_{\chi_0}(s)_{i,i+2t+1}   = -p^{\la -2r+2t}(p^{2r-2t}-1) R(t). \]
Since $(1-p^{i+2t+1-s})-p^{-2r+2t}(p^{2r-2t}-1) = p^{-2r+2t}(1-p^{i+2r+1-s})$, the right hand side of (\ref{compute_b_chi_0-1}) is given by
\[ \frac{p^{-2sr}}{p^{r(2i+2r+1)-2sr}-1} \sum_{t=0}^{r-1}  (-1)^t p^{(r-t)(r-t-1)}(1-p^{i+2r+1-s}) R(t).\]
We can rewrite it as
\[ \frac{1}{p^{r(2i+2r+1)-2sr}-1} \cdot 
\frac{\displaystyle (-1)^r p^{-2rs} (1-p^{i+2r+1-s}) \prod_{k=i+1}^{i+2r} (p^k-1)}{\displaystyle (1-p^{i+1-s}) \prod_{k=1}^r (1-p^{2i+1+2k-2s})(p^{2k}-1)} \times C
\]
with
\[ C = \sum_{t=0}^{r-1} \frac{\displaystyle (-1)^{r-t}p^{(r-t)(r-t-1)}  \prod_{k=t+1}^{r} (1-p^{2i+1+2k-2s})(p^{2k}-1)}{\displaystyle \prod_{k=1}^{r-t}(p^{2k}-1)}. \]
Change $t \mapsto r-t$ we have
\[ C = \sum_{t=0}^{r-1} \frac{\displaystyle (-1)^{t}p^{t^2-t}  \prod_{k=r-t+1}^{r} (1-p^{2i+1+2k-2s})(p^{2k}-1)}{\displaystyle \prod_{k=1}^{t}(p^{2k}-1)}. \]
In the product in the numerator $\prod_{k=r-t+1}^{r}$ we change  $k \mapsto r+1-k$, then we have
\begin{align*}
C & = \sum_{t=0}^{r-1} \frac{\displaystyle (-1)^{t}p^{t^2-t}  \prod_{k=1}^{t} (1-p^{2i+2r+3-2k-2s})(p^{2r+2-2k}-1)}{\displaystyle \prod_{k=1}^{t}(p^{2k}-1)} \\
& =  \sum_{t=0}^{r-1} \prod_{k=1}^t \frac{(p^{2i+2r+1-2s}-p^{2k-2})(p^{2r+2-2k}-1)}{p^{2k}-1}.
\end{align*}
Now our assertion follows from Lemma \ref{lem_product_sum_1}, applying $x = p^2$ and $y = p^{2i+2r+1-2s}$. The proof of (\ref{rewrite_b_chi_0_odd}) is similar. Thus we have proved our assertion.

\end{proof}

On the other hand, $c_{\psi}(s)_{ij}$ is given as follows.

\begin{proposition} \label{prop_explicit_c_psi_ij}
\begin{enumerate}[$(1)$]
\item Let $\psi = \chi_0$. For $i \le j$ we have
\[ c_{\chi_0}(s)_{ij} = \begin{cases}  \frac{\displaystyle p^{\frac{(j-i)(j-i+2)}{4}-(j-i)s}(1-p^{i-s}) \prod_{k=i+1}^{j}(p^k-1)}{\displaystyle (1-p^{j-s}) \prod_{k=1}^{(j-i)/2} (1-p^{i+j-1+2k-2s})(p^{2k}-1)} & i \equiv j \bmod 2 \\
\displaystyle \frac{\displaystyle p^{\frac{(j-i)^2-1}{4}-(j-i)s} \prod_{k=i+1}^{j}(p^k-1)}{ \displaystyle (1-p^{j-s}) \prod_{k=1}^{(j-i-1)/2} (1-p^{i+j+2k-2s})(p^{2k}-1)} & i \not\equiv j \bmod 2 \end{cases} \]

\item Let $\psi = \chi_p$. For $i \le j$ and $i \equiv j \bmod 2$, we have
\[ c_{\chi_p}(s)_{ij} = \frac{\displaystyle \chi_p(-1)^{\frac{j-i}{2}} p^{\frac{j-i}{2}(\frac{j-i}{2}-2s)} \prod_{k=i+1}^{j} (p^k-1)}{\displaystyle \prod_{k=1}^{(j-i)/2} \bigl(1-p^{i+j-1+2k-2s} \bigr)(p^{2k}-1)}. \]
\end{enumerate}
\end{proposition}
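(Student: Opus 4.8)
The plan is to exploit that $B_\psi(s) = \bigl(b_\psi(s)_{ij}\bigr)$ is upper triangular with unit diagonal, since $b_\psi(s)_{ii}=1$ and $b_\psi(s)_{ij}=0$ for $i>j$. Consequently its inverse $C_\psi(s)=\bigl(c_\psi(s)_{ij}\bigr)$ is again upper triangular with unit diagonal, and it is uniquely characterized by the orthogonality relations $\sum_{k} b_\psi(s)_{ik}\,c_\psi(s)_{kj} = \delta_{ij}$. Hence it suffices to substitute the closed forms of Proposition \ref{prop_explicit_b_psi} together with the candidate formulas for $c_\psi(s)_{ij}$ into these relations and verify them. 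The diagonal case $i=j$ is immediate from $b_\psi(s)_{ii}=c_\psi(s)_{ii}=1$, so the whole content is the vanishing $\sum_{k=i}^{j} b_\psi(s)_{ik}\,c_\psi(s)_{kj} = 0$ for $i<j$, with $k$ running over those indices for which both factors are nonzero.

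For the quadratic character only terms with $k\equiv i\equiv j \bmod 2$ survive. Writing $j-i=2r$ and $k=i+2t$, the product $b_{\chi_p}(s)_{i,i+2t}\,c_{\chi_p}(s)_{i+2t,i+2r}$ splits off a $t$-independent prefactor: the two partial products of $(p^k-1)$ telescope into the full $\prod_{k=i+1}^{i+2r}(p^k-1)$, and the $p$-power collapses to $t+(r-t)^2$ up to a $t$-independent factor $p^{-2sr}$. After pulling this prefactor out, the remaining sum over $t$ becomes a sum of products $\prod_{k=1}^{t}(\cdots)$ in the factors $(1-p^{\bullet-2s})$ and $(p^{2k}-1)$, which matches the left-hand side of Lemma \ref{lem_product_sum_2} under the substitution $x=p^2$ and a suitable monomial $y$ in $p$ and $p^{-s}$; the value $-1$ produced by that lemma is exactly what cancels the isolated $t=r$ (diagonal) contribution and yields the required vanishing.

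For the trivial character the intermediate index $k$ runs over both parities between $i$ and $j$, so each target pair must be treated in the $j-i$ even and $j-i$ odd sub-cases, and within each the sum contains both $b_{\chi_0}(s)_{i,i+2t}\,c_{\chi_0}(s)_{i+2t,j}$ and $b_{\chi_0}(s)_{i,i+2t+1}\,c_{\chi_0}(s)_{i+2t+1,j}$. Before Lemma \ref{lem_product_sum_2} can be applied, these paired even and odd contributions must be collapsed into a single clean term, exactly as in the proof of Proposition \ref{prop_explicit_b_psi}, where an identity of the shape $(1-p^{a})-p^{b}(p^{c}-1)=p^{b}(1-p^{a'})$ was used to merge the even- and odd-index $m_{\chi_0}$-terms; an analogous elementary identity recombines the two parities here, after which the surviving single-parity sum is again of the form governed by Lemma \ref{lem_product_sum_2}.

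The main obstacle I expect is precisely this bookkeeping in the trivial-character case: keeping the powers of $p$ and the interleaved factors $(p^k-1)$ and $(1-p^{\bullet-2s})$ aligned through the even/odd recombination, so that the merged sum is genuinely in the form to which Lemma \ref{lem_product_sum_2} applies, and so that both the even and odd difference formulas for $c_{\chi_0}(s)_{ij}$ are confirmed with the correct signs and exponents. By contrast the $\chi_p$ case is a clean single-parity reduction, and it is worth carrying it out first to fix the substitution $x=p^2$ and the choice of $y$ that will then be reused, after the merge, in the $\chi_0$ computation.
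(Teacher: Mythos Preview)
Your proposal is correct and follows essentially the same route as the paper: the paper also reduces to the inverse-matrix relation $c_{\chi_p}(s)_{i,i+2r} = -\sum_{t=1}^{r} b_{\chi_p}(s)_{i,i+2t}\,c_{\chi_p}(s)_{i+2t,i+2r}$, extracts the common prefactor, and applies Lemma~\ref{lem_product_sum_2} with $x=p^{2}$ and $y=p^{2i+2r+1-2s}$ to obtain $R=1$. The only cosmetic difference is that the paper phrases it as induction on $r$ (using the formula for $c_{\chi_p}(s)_{i+2t,i+2r}$ as the induction hypothesis) rather than as a direct verification of $BC=I$, and it treats part~(2) in detail while dismissing part~(1) with ``can be proved similarly''---so your anticipated even/odd merge for $\chi_0$ is exactly the extra bookkeeping the paper leaves to the reader.
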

In particular we have $c_\psi(s)_{ii} = 1$.

\begin{proof}
We only explain (2), since (1) can be proved similarly.
Write $j = i+2r$ with $r \ge 0$, then our assertion is rewritten
\[ c_{\chi_p}(s)_{i,i+2r} = \frac{\displaystyle \chi_p(-1)^r p^{r(r-2s)} \prod_{k=i+1}^{i+2r} (p^k-1)}{\displaystyle \prod_{k=1}^r \bigl(1-p^{2i+2r-1+2k-2s} \bigr)(p^{2k}-1)}. \]
We shall show the above formula by induction on $r$. 
Since the matrix $\bigl( c_{\chi_p}(s)_{ij} \bigr)_{0 \le i,j \le n}$ is the inverse matrix of $(b_{\chi_p}(s)_{ij})_{0, \le i,j \le n}$, we have
\begin{equation} c_{\chi_p}(s)_{i,i+2r} = - \sum_{t=1}^r b_{\chi_p}(s)_{i,i+2t} \, c_{\chi_p}(s)_{i+2t, i+2r}. \label{compute_c_psi-1}
\end{equation}
By the induction hypothesis and Proposition \ref{prop_explicit_b_psi}, we have
\begin{align*}
& b_{\chi_p}(s)_{i,i+2t} \, c_{\chi_p}(s)_{i+2t,i+2r}  = (-1)^t \chi_p(-1)^r p^{t(1-2s) + (r-t)(r-t-2s)} \\
&  \times \frac{\displaystyle  \prod_{k=i+1}^{i+2r}(p^k-1)}{\displaystyle  \Bigl( \prod_{k=1}^t (1-p^{2i+1+2k-2s})(p^{2k}-1) \Bigr) \Bigl( \displaystyle \prod_{k=1}^{r-t} (1-p^{2i+2t+2r-1+2k-2s})(p^{2k}-1) \Bigr) }. 
\end{align*}
We rewrite the right-hand side of (\ref{compute_c_psi-1}) of the form
\[ \frac{\displaystyle \chi_p(-1)^r p^{r(r-2s)} \prod_{k=i+1}^{i+2r} (p^k-1)}{\displaystyle \prod_{k=1}^r \bigl(1-p^{2i+2r-1+2k-2s} \bigr)(p^{2k}-1)} \times R. \]
Note that
\[ \frac{\displaystyle \prod_{k=1}^r (p^{2k}-1)}{\displaystyle \prod_{k=1}^t(p^{2k}-1) \prod_{k=1}^{r-t}(p^{2k}-1)} = \prod_{k=1}^t \frac{ p^{2r-2t+2k} -1}{p^{2k}-1} = \prod_{k=1}^t \frac{p^{2r+2-2k}-1}{p^{2k}-1}, \]
here the last equality follows by changing $k \mapsto t-k+1$ in the numerator.
Moreover we can write
\[ \prod_{k=1}^{r-t}(1-p^{2i+2t+2r-1+2k-2s}) = \prod_{k=t+1}^{r} (1-p^{2i+2r-1 + 2k-2s}). \]
Thus we have
\begin{align*} R & = - \sum_{t=1}^{r} (-1)^t p^{t^2+t -2tr} \prod_{k=1}^t \frac{ (1-p^{2i+2r-1+2k-2s})(p^{2r+2-2k}-1)}{(1-p^{2i+1+2k-2s})(p^{2k}-1)} \\
& = - \sum_{t=1}^{r} \prod_{k=1}^t \frac{ (p^{2i+2r-1+2k-2s}-1)(p^{2r+2-2k}-1)}{(p^{2r-2k} -p^{2i+2r+1-2s})(p^{2k}-1)}.
\end{align*}
By Lemma \ref{lem_product_sum_2} we have $R=1$ and get our result.
\end{proof}

\begin{definition} We denote the ramified Siegel series of $E^{n,(\nu)}_{k,\psi}(Z,s)$ for the half integral matrix $N \in \Sym^n(\Z)^*$ by $S^{(\nu)}_n(\psi,N,s)$ and call it the $U(p)$-characteristic ramified Siegel series.
\end{definition}

In other words we can write
\begin{equation} \label{ramified_Hecke_eigen_standard_base}
 S^{(\nu)}_n(\psi,N,s) = \sum_{ r=\nu  }^n b_{\psi}(s)_{\nu r} \, S^{w_r}_n(\psi,N,s). 
\end{equation}
Then
\begin{equation} \label{ramified_standard_base_Hecke_eigen}
 S^{w_\nu}_n(\psi,N,s) = \sum_{r = \nu}^n c_{\psi}(s)_{\nu r} \, S^{(r)}_{n}(\psi,N,s)
\end{equation}
holds. As a consequence, our aim is to find an explicit formula of $S^{(\nu)}_{n}(\psi,N,s)$. We note that 
\[ S^{w_n}_n(\psi,N,s) = S^{(n)}_{n}(\psi,N,s) = 1. \]

\section{Functional equations of ramified Siegel series} \label{Sec_functional_equation}

In this section we show the functional equations of the $U(p)$-characteristic ramified Siegel series $S^{(\nu)}_{n}(\psi,N,s)$ for non-degenerate $N \in \Sym^n(\Z)^*$. 

\subsection{Fourier coefficient of Siegel Eisenstein series}

First we recall the functional equations of Siegel Eisenstein series of level $p$. Let 
\[ \ve_p = \begin{cases} 1  & p \equiv 1 \bmod 4, \\ \sqrt{-1} & p \equiv 3 \bmod 4, \end{cases} \quad \delta_p = \begin{cases} 0 & p \equiv 1 \bmod 4, \\ 1 & p \equiv 3 \bmod 4. \end{cases} \]
We put for $0 \le \nu \le n$,
\begin{align*}
  \g_\nu (\chi_0, s)  & =  \frac{1-p^{-s}}{1-p^{\nu-s}} \prod_{i=1}^{[\nu/2]} \frac{1-p^{2i-2s}}{1-p^{2\nu+1 -2i-2s}}, \\
  \g_\nu (\chi_p, s)  & =  (\ve_p p^{-1/2})^\nu  \prod_{i=1}^{[\nu/2]} \frac{1-p^{2i-2s}}{1-p^{2\nu+1 -2i-2s}}
\end{align*}
and set
\begin{align*} 
\mathbb{E}^{n,(\nu)}_{k,\chi_0}(Z,s) & = \gamma_\nu(\chi_0,s ) \frac{\Gamma_n\Bigl(\dfrac{s+k}{2} \Bigr)}{\Gamma_n\Bigl( \dfrac{s}{2} \Bigr)} \xi(s) \prod_{j=1}^{[n/2]} \xi(2s-2j) E^{n,(\nu)}_{k,\chi_0}(Z,s), \\
\mathbb{E}^{n,(\nu)}_{k,\chi_p}(Z,s) & = \gamma_\nu(\chi_p,s ) \frac{\Gamma_n\Bigl(\dfrac{s+k}{2} \Bigr)}{\Gamma_n\Bigl( \dfrac{s+\delta_p}{2} \Bigr)} \xi(\chi_p,s) \prod_{j=1}^{[n/2]} \xi(2s-2j) E^{n,(\nu)}_{k,\chi_p}(Z,s).
\end{align*}
Here $\xi(s)$ (resp.\  $\xi(\chi_p,s)$) is the completed Riemann zeta function (resp.\ completed Dirichlet $L$-function), defined by
\[ \xi(s) = \pi^{-s/2} \Gamma \Bigl( \frac{s}{2} \Bigr) \zeta(s), \quad \xi(\chi_p,s) = \Bigl( \frac{p}{\pi} \Bigr)^{(s+\delta_p)/2} \Gamma \Bigl( \frac{s + \delta_p}{2} \Bigr) L(\chi_p,s), \]
that satisfies $\xi(1-s) = \xi(s)$ and $\xi(\chi_p,1-s) = \xi(\chi_p,s)$ respectively.

\begin{theorem}[{\cite[Theorem 4.3, Theorem 4.8]{Gu4}}] \label{thm_FE_Siegel_Eisenstein_level_p}
For $\psi = \chi_0$ or $\chi_p$, we have the functional equation
\[ \mathbb{E}^{n,(\nu)}_{k,\psi}(Z,s) = \mathbb{E}^{n,(n-\nu)}_{k,\psi}(Z,n+1-s) . \]
\end{theorem}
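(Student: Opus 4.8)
The plan is to prove the functional equation by diagonalizing the Eisenstein scattering operator against the $U(p)$-action, using the Fourier expansion both to establish that \emph{a} functional equation exists and to normalize the final constant. First I would record the numerical coincidence that makes the pairing $\nu\leftrightarrow n-\nu$ inevitable: with $l(s,\nu)$ as in (\ref{eq_l(s,nu)}), a direct computation gives $l(n+1-s,\,n-\nu)=l(s,\nu)$ identically in $s$. Consequently $\mathbb{E}^{n,(\nu)}_{k,\psi}(Z,s)$ and $\mathbb{E}^{n,(n-\nu)}_{k,\psi}(Z,n+1-s)$ are both eigenfunctions of $U(p)$ with the \emph{same} eigenvalue $p^{l(s,\nu)}$, so the two sides of the asserted identity at least lie in matching $U(p)$-eigenspaces. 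For generic $s$ the $n+1$ eigenvalues $p^{l(s,\mu)}$ $(0\le\mu\le n)$ are pairwise distinct, so each $U(p)$-eigenspace on the space of Eisenstein series is one-dimensional.

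Next I would establish that the meromorphic continuation of $\mathbb{E}^{n,(\nu)}_{k,\psi}(Z,s)$ can be written as $\sum_\mu \sigma_{\nu\mu}(s)\,\mathbb{E}^{n,(\mu)}_{k,\psi}(Z,n+1-s)$ for some scattering matrix $\sigma(s)$. This I would do through the Fourier expansion of Proposition \ref{prop_Fourier_exp_Eisenstein_level_p_1}: for non-degenerate $N$ the $N$-th coefficient factors as the confluent hypergeometric factor $\det(Y)^{(s-k)/2}\Xi_n(Y,N,(s+k)/2,(s-k)/2)$ times the $p$-local ramified Siegel series times $\prod_{q\ne p}S_{n,q}(N,\psi(q)q^{-s})$. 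Feeding in Shimura's functional equation for $\Xi_n$ (which supplies precisely the archimedean gamma-factor matching encoded in the ratio $\Gamma_n((s+k)/2)/\Gamma_n(s/2)$, resp.\ with the $\delta_p$-shift) and Katsurada's functional equation for the unramified Siegel series (packaged by the completed factors $\xi(s)$, $\xi(\chi_p,s)$ and $\prod_{j}\xi(2s-2j)$) shows that the archimedean factor and the product over $q\ne p$ become invariant under $s\mapsto n+1-s$ once completed. What remains is the transformation of the single $p$-factor, which forces the continuation to land back in the span of the level-$p$ Eisenstein series and produces the scattering relation.

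Because the scattering operator $\sigma(s)$ arises from analytic continuation, which commutes with the Hecke operator $U(p)$ (the latter acting identically on Fourier expansions at $s$ and at $n+1-s$), and because eigenvalues match only when $\mu=n-\nu$, the matrix $\sigma(s)$ must be anti-diagonal: $\mathbb{E}^{n,(\nu)}_{k,\psi}(Z,s)=\sigma_\nu(s)\,\mathbb{E}^{n,(n-\nu)}_{k,\psi}(Z,n+1-s)$ for a single scalar $\sigma_\nu(s)$. Finally I would pin down $\sigma_\nu(s)\equiv 1$ by comparing one explicit Fourier coefficient — most cheaply a leading unimodular term or the constant-term contribution — where the factors $\gamma_\nu(\psi,s)$ have been inserted precisely to absorb the $\nu$-versus-$(n-\nu)$ discrepancy coming from the $p$-local factor and the cusp combinatorics.

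I expect the main obstacle to be the $p$-local analysis: controlling how the ramified $p$-factor transforms under $s\mapsto n+1-s$ and verifying that, after multiplication by $\gamma_\nu(\psi,s)$, it contributes trivially to the scattering scalar. Unlike the places $q\ne p$, there is no off-the-shelf functional equation for the ramified Siegel series available at this stage — indeed deriving it is the goal of the next section — so this step must be carried out either by a direct Iwahori-orbit computation of the $p$-factor in the $U(p)$-eigenbasis, or, the route I would prefer, by invoking the representation-theoretic functional equation of the degenerate principal series on $\Sp(n)$ induced from the Siegel parabolic, whose local intertwining operator at $p$ is diagonalized by the $U(p)$-eigenvectors with the pairing $\nu\leftrightarrow n-\nu$ dictated by the eigenvalue coincidence $l(n+1-s,\,n-\nu)=l(s,\nu)$.
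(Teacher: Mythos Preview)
This theorem is not proved in the present paper: it is quoted from \cite[Theorem 4.3, Theorem 4.8]{Gu4} and then used as a black box to derive the functional equation of the ramified Siegel series (Theorem \ref{thm_FE_ramified_Siegel_series}). There is therefore no proof within this paper against which to compare your proposal.

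Your sketch is a plausible outline, and you correctly flag the central difficulty: a Fourier-coefficient approach needs control of the $p$-local factor under $s\mapsto n+1-s$, yet the functional equation of the ramified Siegel series is exactly what the paper derives \emph{from} Theorem \ref{thm_FE_Siegel_Eisenstein_level_p}, so invoking it here would be circular. Your proposed escapes --- a direct Iwahori-orbit computation or the local intertwining operator for the degenerate principal series --- are the natural candidates; the eigenvalue coincidence $l(n+1-s,\,n-\nu)=l(s,\nu)$ you record is indeed the mechanism that forces the pairing $\nu\leftrightarrow n-\nu$. But the actual argument, including the verification that the factors $\gamma_\nu(\psi,s)$ normalize the scattering scalar to $1$, resides entirely in the external reference \cite{Gu4} and cannot be reconstructed from what is in this paper.
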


One can induce the functional equations of ramified Siegel series between $S^{(\nu)}_n(\psi,N,s)$ and $S^{(n-\nu)}_n(\psi,N,n+1-s)$ by this theorem. We refer the calculation in \cite{BK}, however the proof is simpler than that. This is because we can get  our assertion directly from Theorem \ref{thm_FE_Siegel_Eisenstein_level_p}, for we already know the functional equations of ordinary Siegel series.

Take a non-degenerate $N \in \Sym^n(\Z)^*$,  put $D_N = (-4)^{[n/2]} \det N$.  For each prime number $q$,  we put $d_q(N) = \ord_q D_N$.  The Hasse invariant $h_q(N)$ of $N$ is defined as follows; if $N$ is $\GL_n(\Q_p)$-equivalent to $\diag(a_1, \ldots, a_n)$ then we define $h_q(N) = \prod_{i \le j}(a_i,a_j)_q$, here $( \cdot \, , \cdot )_q$ stands for the Hilbert symbol. We set
\[ \zeta_q(N) = \begin{cases} 1 & \text{if $n$ is even,} \\ h_q(N) \, \bigl( \det N, (-1)^{\frac{n-1}{2}} \det N \bigr)_q \, \bigl(-1,-1 \bigr)_q^{\frac{n^2-1}{8}} & \text{if $n$ is odd.}\end{cases} \]

In the case that $n$ is even, let $\chi_N$ be the Dirichlet character associated to the quadratic extension $\Q(\sqrt{D_N})/\Q$, that is given concretely as follows. We denote the discriminant of $\Q(\sqrt{D_N})/\Q$ by $\fd_N$. Then the conductor of $\chi_N$ is $|\fd_N|$, if $(q,\fd_N) = 1$ for an odd prime $q$ then $\chi_N(q) = (\frac{\fd_N}{q})$; $\chi_N(-1) = \sgn( \fd_N)$ and finally
\[ \chi_N(2) = \begin{cases} 1 & \fd_N \equiv \pm 1 \bmod 8 \\ -1 & \fd_N \equiv \pm 3 \bmod 8 \\ 0 & \text{$\fd_N$ is even.} \end{cases} \]
We set
\[ e_q(N) = \begin{cases} d_q(N)-\ord_q \fd_N & \text{if $n$ is even}, \\ d_q(N) & \text{if $n$ is odd.} \end{cases} \]

We need further notations for $q=p$ case. We write $D_N = p^{d_p(N)} D_N'$ and put
\[ \eta_p(N) = \begin{cases} 1 & \text{if $n$ is even,} \\ \chi_p(D_N') \zeta_p(N) & \text{if $n$ is odd.} \end{cases} \]
When $n$ is even, let $\chi_N^*$ be the primitive character arising from $\chi_N \chi_p$. If we put
\[ \fd_N^* = \begin{cases} p \fd_N & \text{if  $(p, \fd_N ) = 1$,} \\ p^{-1} \fd_N & \text{if $p \mid \fd_N$}, \end{cases} \]
then the conductor of $\chi_N^*$ is $| \fd_N^*|$. We have $\chi_N^*(p) = 0$ or $\chi_N^*(p)= \chi_p(-\fd_N^*)$ according as $(p, \fd_N) = 1$ or $p \mid \fd_N$, respectively.
Also in the case of  $n$ is odd,  we understand that $\chi_N(p) = \chi_N^*(p) = 0$.

Now we state the main result of this section.
For $0 \le \nu \le n$, we define the \emph{zeta factor} $\beta_n^{(\nu)}(\psi,N,s)$ of the $U(p)$-characteristic ramified Siegel series $S^{(\nu)}_n(\chi_p,N,s)$ by
\begin{align*}
 \beta_n^{(\nu)}(\chi_0,N,s) & = \frac{(1-p^{\nu-s})\displaystyle \prod_{i=1}^{[\nu/2]} (1- p^{2 \nu + 1- 2i-2s}) \prod_{i=[\nu/2]+1}^{[n/2]}(1-p^{2i-2s})}{(1- \chi_{N}(p) p^{n/2-s})}, \\[5pt]
 \beta_n^{(\nu)}(\chi_p,N,s) & = \frac{\displaystyle \prod_{i=1}^{[\nu/2]} (1- p^{2 \nu + 1- 2i-2s}) \prod_{i=[\nu/2]+1}^{[n/2]}(1-p^{2i-2s})}{(\ve_p p^{-1/2})^\nu (1- \chi_{N}^*(p) p^{n/2-s})},
\end{align*}
and define the \emph{principal part} $F_n^{(\nu)}(\psi,N,s)$ of $S^{(\nu)}_n(\psi,N,s)$ by
\[ S_n^{(\nu)}(\psi,N,s) = \beta_n^{(\nu)}(\psi,N,s) F_n^{(\nu)}(\psi,N,s). \]

\begin{theorem} \label{thm_FE_ramified_Siegel_series}
Let $N \in \Sym^n(\Z)^*$ be non-degenerate.  If $n$ is odd then we put $a_N=1$; if $n$ is even then we take $a_N \in \{0, 1 \}$ so that $a_N \equiv d_p(N) \bmod 2$. Then the principal part of the ramified Siegel series satisfies the following functional equations. If $\psi = \chi_0$, then
\[ F_n^{(n-\nu)}(\chi_0,N,n+1-s)  = \zeta_p(N) p^{(s-\frac{n+1}{2})e_p(N)} F_n^{(\nu)}(\chi_0,N,s), \]
if $\psi = \chi_p$ then
\[ F_n^{(n-\nu)}(\chi_p,N,n+1-s)  = \eta_p(N) p^{(s-\frac{n+1}{2})(d_p(N)+a_N)} F_n^{(\nu)}(\chi_p,N,s).
\]
\end{theorem}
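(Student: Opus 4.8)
The plan is to obtain the functional equations of the ramified factor by a \emph{division} argument: compare the $N$-th Fourier coefficients on the two sides of the global functional equation in Theorem \ref{thm_FE_Siegel_Eisenstein_level_p}, and cancel every local factor whose behaviour under $s\mapsto n+1-s$ is already known. Fix a non-degenerate $N\in\Sym^n(\Z)^*$. Since $E^{n,(\nu)}_{k,\psi}(Z,s)$ is the linear combination (\ref{U(p)-eigen_Eisenstein}) of the $E^n_{k,\psi}(w_r;Z,s)$ and Fourier coefficients are linear, Proposition \ref{prop_Fourier_exp_Eisenstein_level_p_1} shows that the $N$-th coefficient of $\mathbb{E}^{n,(\nu)}_{k,\psi}(Z,s)$ is a product of four pieces: (a) the archimedean factor $\det(Y)^{(s-k)/2}\Xi_n(Y,N,\tfrac{s+k}{2},\tfrac{s-k}{2})$ times the gamma ratio appearing in the definition of $\mathbb{E}^{n,(\nu)}_{k,\psi}$; (b) the completed zeta resp.\ $L$-factors $\xi(s)$ (resp.\ $\xi(\chi_p,s)$) and $\prod_{j=1}^{[n/2]}\xi(2s-2j)$; (c) the away-from-$p$ product $\prod_{q\ne p}S_{n,q}(N,\psi(q)q^{-s})$; and (d) the normalization $\gamma_\nu(\psi,s)$ times the ramified factor $S^{(\nu)}_n(\psi,N,p^{-s})$. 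Only (d), through $\gamma_\nu$ and $S^{(\nu)}_n$, carries any dependence on the cusp index $\nu$; pieces (a)--(c) are the same for $\nu$ and $n-\nu$.

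Next I would feed in the known transformations of (a), (b), (c) under $s\mapsto n+1-s$. The gamma ratio in the definition of $\mathbb{E}^{n,(\nu)}_{k,\psi}$ is tailored so that, together with Shimura's functional equation for $\Xi_n$ from \cite{Shi1}, piece (a) maps to its value at $n+1-s$ up to an explicit power of $\det(Y)$ and $\det N$; piece (b) is handled by $\xi(1-w)=\xi(w)$ and $\xi(\chi_p,1-w)=\xi(\chi_p,w)$; and for piece (c) I would use Katsurada's functional equation $F_{n,q}(N,n+1-s)=R_q(N,s)\,q^{(s-\frac{n+1}{2})d_q(N)}F_{n,q}(N,s)$ for each $q\ne p$ together with the standard factorization of $S_{n,q}$ as a zeta/$L$-Euler factor times its polynomial part $F_{n,q}$. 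The decisive simplification is that, if $p$ were \emph{not} excluded, the completed product over all places would already be symmetric; hence after the substitution the entire discrepancy is localized at $p$. Concretely, $\prod_{q\ne p}q^{(s-\frac{n+1}{2})d_q(N)}$ equals a global $|D_N|$-power times $p^{-(s-\frac{n+1}{2})d_p(N)}$ (corrected by the $L$-conductor $\ord_p\fd_N$ in the even case), and the Hilbert-symbol data $\prod_q R_q(N,s)$ collapses by reciprocity to the single $p$-contribution giving $\zeta_p(N)$ resp.\ $\eta_p(N)$.

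Solving the resulting identity for the ramified factor produces a functional equation between $\gamma_\nu(\psi,s)S^{(\nu)}_n(\psi,N,p^{-s})$ and $\gamma_{n-\nu}(\psi,n+1-s)S^{(n-\nu)}_n(\psi,N,p^{-(n+1-s)})$ with a fully explicit constant. I would then substitute the decomposition $S^{(\nu)}_n(\psi,N,s)=\beta_n^{(\nu)}(\psi,N,s)F_n^{(\nu)}(\psi,N,s)$ and verify that $\beta_n^{(\nu)}$ has been defined exactly so as to absorb (i) the ratio $\gamma_\nu/\gamma_{n-\nu}$, (ii) the missing $p$-Euler factor $(1-\chi_N(p)p^{n/2-s})^{-1}$ (resp.\ with $\chi_N^*$) of the zeta/$L$-function, and (iii) the eigenvalue-dependent products $\prod_{i=1}^{[\nu/2]}(1-p^{2\nu+1-2i-2s})$ and $\prod_{i=[\nu/2]+1}^{[n/2]}(1-p^{2i-2s})$. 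After these cancellations one is left precisely with the clean relations claimed for $F_n^{(\nu)}$, namely the factor $\zeta_p(N)p^{(s-\frac{n+1}{2})e_p(N)}$ for $\psi=\chi_0$ and $\eta_p(N)p^{(s-\frac{n+1}{2})(d_p(N)+a_N)}$ for $\psi=\chi_p$.

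The main obstacle is the explicit evaluation of the residual $p$-factor and the verification that $\beta_n^{(\nu)}$ absorbs it, which forces a split into the four cases $n$ even/odd times $\psi=\chi_0/\chi_p$ with complementary difficulties. For even $n$ the work is character bookkeeping: when $\psi=\chi_p$ the character $\chi_N$ of conductor $\fd_N$ must be replaced by the primitive twist $\chi_N^*$ of conductor $\fd_N^*$ whose $p$-part is shifted by one, the parity auxiliary $a_N\equiv d_p(N)\bmod 2$ enters through this shift, and the Gauss-sum constants $\ve_p$ built into $\gamma_\nu(\chi_p,s)$ and $\beta_n^{(\nu)}(\chi_p,N,s)$ must be matched on both sides. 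For odd $n$ the values $\chi_N(p)=\chi_N^*(p)=0$ trivialize the $L$-Euler factor, but now the Hasse-invariant and Hilbert-symbol data must be collapsed by the product formula to identify the root-of-unity constant $\zeta_p(N)$ resp.\ $\eta_p(N)$. Pinning down the exact exponent of $p$ in each case is where essentially all the computation sits, and this division route avoids the direct local integral computation of \cite{BK}.
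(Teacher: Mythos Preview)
Your proposal is correct and follows essentially the same route as the paper: start from the global functional equation $\mathbb{E}^{n,(\nu)}_{k,\psi}(Z,s)=\mathbb{E}^{n,(n-\nu)}_{k,\psi}(Z,n+1-s)$, compare Fourier coefficients at a non-degenerate $N$, and cancel the archimedean factor, the completed zeta/$L$-factors, and the away-from-$p$ Siegel series (via Katsurada's functional equation) to isolate the ramified $p$-factor. The paper carries this out for $\psi=\chi_p$ with the specific simplifying choice $k=\delta_p$ (which kills the extra gamma ratio in the definition of $\mathbb{E}$), and the bulk of the work is exactly where you locate it: the archimedean step requires showing that a certain ratio $g(s)$ of $\Gamma^*_r$-products is invariant under $s\mapsto n+1-s$ up to the sign $\zeta_\infty(N)$, which the paper handles case-by-case (Lemmas \ref{lem_FE_g(s)} and \ref{lem_FE_g_1_and_g_2}) using Mizumoto's identity $\Gamma_m(s)/\Gamma_m(s+r)=(-1)^{mr}\Gamma_m(\tfrac{m+1}{2}-r-s)/\Gamma_m(\tfrac{m+1}{2}-s)$.
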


Since $S_n^{(n)}(\chi_p,N,s)=1$, we can find the explicit formula of $S_n^{(0)}(\psi,N,s)$ by using the functional equation.

\begin{corollary} \label{cor_explicit_formula_S_n^(n)}
Under the same notation as in Theorem \emph{\ref{thm_FE_ramified_Siegel_series}}, we have the following.

\begin{enumerate}[$(1)$]
\item For the trivial character case we have
\begin{align*}
 S_n^{(0)}(\chi_0,N,s)  & = \zeta_p(N) p^{(\frac{n+1}{2}-s)e_p(N)}  
\\ 
& \qquad  \times \frac{ \displaystyle (1-p^{-s}) \bigl( 1-\chi_N(p) p^{s-\frac{n}{2}-1} \bigr) \prod_{i=1}^{[n/2]} (1-p^{2i-2s}) }{\displaystyle (1-p^{s-1}) \bigl(1-\chi_N(p)p^{\frac{n}{2}-s} \bigr) \prod_{i=1}^{[n/2]}(1-p^{2s-2i-1})}.
\end{align*}

\item For the quadratic character case we have
\begin{align*} S_n^{(0)}(\chi_p,N,s) & = \eta_p(N) \, \ve_p^n  \, p^{ (\frac{n+1}{2}-s)(d_p(N)+a_N)-\frac{n}{2}}   \\
& \qquad \times \frac{ \displaystyle (1-\chi_N^*(p) p^{s-\frac{n}{2}-1}) \prod_{j=1}^{[n/2]} (1-p^{2j-2s})}{ \displaystyle (1-\chi_N^*(p)p^{\frac{n}{2}-s}) \prod_{j=1}^{[n/2]}(1-p^{2s-1-2j})}.
\end{align*}
\end{enumerate}
\end{corollary}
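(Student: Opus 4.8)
The plan is to obtain the corollary as a direct consequence of the functional equation in Theorem \ref{thm_FE_ramified_Siegel_series}, specialized to $\nu = n$, together with the normalization $S_n^{(n)}(\psi,N,s) = 1$ recorded at the end of Section \ref{Sec_Siegel_Eisenstein}. Since $S_n^{(n)}(\psi,N,s) = \beta_n^{(n)}(\psi,N,s) F_n^{(n)}(\psi,N,s) = 1$, the principal part at $\nu = n$ is simply $F_n^{(n)}(\psi,N,s) = \beta_n^{(n)}(\psi,N,s)^{-1}$, which is completely explicit.

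First I would put $\nu = n$ in Theorem \ref{thm_FE_ramified_Siegel_series}, so that $n-\nu = 0$, and then replace $s$ by $n+1-s$. This yields
\[ F_n^{(0)}(\chi_0,N,s) = \zeta_p(N)\, p^{(\frac{n+1}{2}-s) e_p(N)}\, F_n^{(n)}(\chi_0,N,n+1-s) \]
in the trivial character case, and the analogous identity with $\zeta_p(N)$ and $e_p(N)$ replaced by $\eta_p(N)$ and $d_p(N)+a_N$ in the quadratic case. Multiplying by the zeta factor $\beta_n^{(0)}(\psi,N,s)$ and substituting $F_n^{(n)}(\psi,N,n+1-s) = \beta_n^{(n)}(\psi,N,n+1-s)^{-1}$ then reduces the whole problem to evaluating the quotient $\beta_n^{(0)}(\psi,N,s)\,\beta_n^{(n)}(\psi,N,n+1-s)^{-1}$ explicitly.

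The remaining step is a direct computation of the two zeta factors. Here the bookkeeping is pleasantly short: when $\nu = 0$ one has $[\nu/2] = 0$ and when $\nu = n$ one has $[\nu/2] = [n/2]$, so in both cases the second product $\prod_{i=[\nu/2]+1}^{[n/2]}$ in the definition of $\beta_n^{(\nu)}(\psi,N,s)$ is empty. After the substitution $s \mapsto n+1-s$ the exponents collapse via $n-(n+1-s) = s-1$, $\;2n+1-2i-2(n+1-s) = 2s-1-2i$, and $n/2-(n+1-s) = s-\frac{n}{2}-1$, so that the surviving products assemble into exactly the zeta quotient displayed in the corollary. In the quadratic case the prefactor $(\ve_p p^{-1/2})^n = \ve_p^n p^{-n/2}$ coming from $\beta_n^{(n)}(\chi_p,N,n+1-s)^{-1}$ produces the extra $\ve_p^n$ and shifts the exponent of $p$ to $(\frac{n+1}{2}-s)(d_p(N)+a_N) - \frac{n}{2}$.

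The argument involves no genuine obstacle; the only point requiring care is the exponent and unit bookkeeping in the quadratic case, namely checking that the $\ve_p$- and $p$-powers from $(\ve_p p^{-1/2})^n$ combine with the factor $p^{(\frac{n+1}{2}-s)(d_p(N)+a_N)}$ to give precisely the stated normalization, and (for odd $n$) recalling the convention $\chi_N(p) = \chi_N^*(p) = 0$ so that the formulas remain valid uniformly in the parity of $n$.
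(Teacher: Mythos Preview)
Your proposal is correct and follows precisely the approach the paper indicates: the paper states just before the corollary that ``Since $S_n^{(n)}(\chi_p,N,s)=1$, we can find the explicit formula of $S_n^{(0)}(\psi,N,s)$ by using the functional equation,'' and gives no further details. Your write-up simply makes this explicit, computing $\beta_n^{(0)}(\psi,N,s)\,\beta_n^{(n)}(\psi,N,n+1-s)^{-1}$ and tracking the $\ve_p$- and $p$-powers in the quadratic case exactly as required.
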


In the rest of this section we shall prove the theorem in the case of quadratic character. For the trivial character case, one can prove it similarly.
For the proof we follow the calculation in \cite{BK}.

Now assume that $\psi = \chi_p$. It is convenient to consider the Siegel Eisenstein series of weight $\delta_p$. We denote the Fourier coefficient of
\[ \mathbb{E}^{n,(\nu)}_{\delta_p,\chi_p}(Z,s) = \gamma_\nu(\chi_p,s) \xi(\chi_p,s) \prod_{j=1}^{[n/2]}\xi(2s-2j) E^{n,(\nu)}_{\delta_p,\chi_p}(Z,s) \]
at $N \in \Sym^n(\Z_p)^*$ by $\mathbb{A}_N^{(\nu)}(Y,s)$ or simply $\A_N^{(\nu)}(s)$. If $N$ is non-degenerate, by Proposition \ref{prop_Fourier_exp_Eisenstein_level_p_1} we have
\begin{align} \A_N^{(\nu)}(Y,s) & =  \det(Y)^{(s-\delta_p)/2} \g_\nu(N,s) \xi(\chi_p,s) \prod_{j=1}^{[n/2]} \xi(2s-2j)  \label{A_N^(nu)_1}  \\
&  \times \Xi_n \Bigl(Y,N, \frac{s+\delta_p}{2},\frac{s-\delta_p}{2} \Bigr) S_n^{(\nu)}(\chi_p, N,p^{-s}) \prod_{q \ne p} S_{n,q}(N, \chi_p(q)q^{-s}). \notag
\end{align}

First we study the confluent hypergeometric function $\Xi_n(Y,N, \alpha, \beta)$. In the following, we denote by $C$ a constant that does not depend on $s$.
Assume that the signature of $N$ is $(a+,b-)$. By \cite[(4.6.K), (4.30.K), (4.34.K)]{Shi1}, we can write
\begin{align*} & \qquad (\det Y)^{(s-\delta_p)/2} \Xi_n \Bigl( Y,N; \frac{s+\delta_p}{2}, \frac{s-\delta_p}{2}\Bigr) \\
& = C \, \Gamma_a \Bigl ( \frac{s+\delta_p}{2} \Bigr)^{-1} \Gamma_b \Bigl( \frac{s-\delta_p}{2} \Bigr)^{-1} |\det \pi N|^{s/2} \psi \Bigl(2 \pi Y, N; \frac{s+\delta_p}{2}, \frac{s-\delta_p}{2} \Bigr),
\end{align*}
here the function  $\widetilde{\psi}(s) = \psi(2 \pi Y,N, \frac{s+\delta_p}{2}, \frac{s-\delta_p}{2} )$ satisfies $\widetilde{\psi}(n+1-s) = \widetilde{\psi}(s)$ by \cite[(4.31)]{Shi1}.

Next we recall the functional equation of the ordinary Siegel series.  It is known that if $n$ is even, then
\[  S_{n,q}(N,\chi_p(q)q^{-s}) =  \frac{(1-\chi_p(q)q^{-s}) \prod_{j=1}^{n/2} (1-q^{2j-2s}) }{(1- \chi_N \chi_p(q) q^{n/2-s})}   F_{n,q}(N, \chi_p(q) q^{-s}), \]
and if $n$ is odd then
\[ S_{n,q}(N, \chi_p(q)q^{-s}) =  (1-\chi_p(q)q^{-s}) \prod_{j=1}^{(n-1)/2}(1-q^{2j-2s}) F_{n,q}(N, \chi_p(q) q^{-s}), \]
with a certain polynomial  $F_{n,q}(N,X)$ in $X$, that coincides to $1$ for almost all $q$ (cf.\ \cite[Proposition 3.6]{Shi2}). The term $F_{n,q}(N,\chi_p(q)q^{-s})$ is called the \emph{polynomial part} of the Siegel series.

\begin{theorem}[{\cite[Theorem 3.2]{Kat}, \cite[Theorem]{BK}}]\label{thm_FE_unram_Siegel_series}
For  $q \ne p$, the following functional equation holds.
\[ F_{n,q}(N, \chi_p(q)q^{s-n-1}) = \zeta_q(N) \, \bigl(\chi_p(q) q^{s-\frac{n+1}{2}} \bigr)^{e_q(N)} F_{n,q}(N, \chi_p(q)q^{-s}). \]
\end{theorem}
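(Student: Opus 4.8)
The plan is to treat this as a purely local statement at the single prime $q$ and to prove the functional equation under $s \mapsto n+1-s$. First I would reduce. Over $\Z_q$ with $q$ odd every non-degenerate $N$ is $\GL_n(\Z_q)$-equivalent to a diagonal form $\diag(\alpha_1 q^{u_1}, \ldots, \alpha_n q^{u_n})$, and both sides of the asserted identity are invariant under this equivalence, so I may assume $N$ diagonal. Since $\chi_p(q) \in \{ \pm 1 \}$ is a constant, the twist $X = \chi_p(q) q^{-s}$ merely reparametrizes the rational function $F_{n,q}(N,X)$: using $\chi_p(q)^2 = 1$ one checks that the untwisted gamma factor $\zeta_q(N)\,(q^{s-\frac{n+1}{2}})^{e_q(N)}$ turns into the stated $\zeta_q(N)\,(\chi_p(q) q^{s-\frac{n+1}{2}})^{e_q(N)}$, the extra sign being exactly $\chi_p(q)^{e_q(N)}$. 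Thus it suffices to prove the functional equation for the untwisted series, while tracking the transformation of the explicit zeta/$L$-factor relating $S_{n,q}$ to $F_{n,q}$ recorded just before the theorem.

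The main engine is to realize $S_{n,q}(N,s)$ as the $q$-adic local zeta function of the prehomogeneous vector space $\Sym^n(\Q_q)$ under the $\GL_n$-action $g \cdot x = g x\,{}^t g$, which is precisely the viewpoint of Sato--Hironaka \cite{SH}. Here the relative invariant $\det$ has degree $n$ and $\dim \Sym^n(\Q_q)/n = \tfrac{n+1}{2}$, so the attached local zeta function carries a functional equation about the center $s = \tfrac{n+1}{2}$, i.e.\ under $s \mapsto n+1-s$, with an explicit gamma factor; this is the structural source of the symmetry. (Equivalently one may follow Katsurada's local-density computation in \cite{Kat} or the argument of \cite{BK}.) I would deliberately keep the proof local rather than deduce it globally from Proposition \ref{prop_Fourier_exp_Eisenstein_level_p_1} and Theorem \ref{thm_FE_Siegel_Eisenstein_level_p}, because within this paper Theorem \ref{thm_FE_ramified_Siegel_series} is deduced \emph{from} the present result, so a global route would entangle the unknown $p$-part with the $q$-part and be circular.

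Identifying the gamma factor explicitly is where I expect the real work to lie. The factor is a product of local Gauss sums attached to $N$; evaluating them collects the Hilbert-symbol and Hasse-invariant data into the constant $\zeta_q(N)$, the discriminant content $e_q(N) = \ord_q D_N - \ord_q \fd_N$ (resp.\ $d_q(N)$ for odd $n$) contributes the power $q^{(s-\frac{n+1}{2})e_q(N)}$, and for even $n$ the normalizing $L$-factor $(1-\chi_N \chi_p(q) q^{n/2-s})^{-1}$ that defines $F_{n,q}$ injects the character $\chi_N$. The natural base case is a $\GL_n(\Z_q)$-unimodular $N$, where $F_{n,q}(N,\cdot)=1$, $e_q(N)=0$, and $\zeta_q(N)$ reduces to the Hasse invariant of a unit form. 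As a consistency check I would invoke Katsurada's relation (\ref{inductive_relation_Katsurada_1}) at $q$: the forms $N$ and $N' = N_0 \perp \langle \alpha_n q^{u_n+2} \rangle$ are $\GL_n(\Q_q)$-equivalent, the last entry being scaled by the square $q^2$, so $\zeta_q(N) = \zeta_q(N')$ and $\chi_N = \chi_{N'}$ while $e_q(N') = e_q(N)+2$ --- precisely the invariance and the shift that the claimed functional-equation factor must respect as the content of $N$ grows, and a good way to verify the Gauss-sum bookkeeping.
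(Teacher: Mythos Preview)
The paper does not give its own proof of this theorem; it simply quotes the result from \cite[Theorem 3.2]{Kat} and \cite[Theorem]{BK} and uses it as a black box. So there is no ``paper's proof'' to compare against beyond those citations.

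Your outline is sound as a local argument. The reduction you make in the first paragraph is exactly right: $F_{n,q}(N,X)$ is a polynomial in $X$, so substituting $X \mapsto \chi_p(q)X$ in the untwisted functional equation $F_{n,q}(N,q^{-n-1}X^{-1}) = \zeta_q(N)(q^{-(n+1)/2}X^{-1})^{e_q(N)} F_{n,q}(N,X)$ and using $\chi_p(q)^{-1}=\chi_p(q)$ immediately yields the stated twisted form. The prehomogeneous vector space interpretation of $S_{n,q}$ is the natural conceptual home for the symmetry $s \leftrightarrow n+1-s$, and your identification of the gamma factor through local Gauss sums, Hasse invariants, and the parity of $e_q(N)$ is the standard bookkeeping.

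As for the cited sources: B\"ocherer--Kohnen's argument in \cite{BK} is in fact \emph{global} --- they deduce the local functional equation from the functional equation of level $1$ Siegel Eisenstein series together with the known archimedean gamma factor, isolating one prime at a time. Your circularity worry is legitimate in spirit but does not bite here: the global input in \cite{BK} is the level $1$ functional equation, which is logically independent of Theorem \ref{thm_FE_Siegel_Eisenstein_level_p} and of Theorem \ref{thm_FE_ramified_Siegel_series} in this paper. Katsurada's route in \cite{Kat} is closer to yours, staying at the single prime $q$ and using the representation of the Siegel series as a local Whittaker integral. Either route is acceptable; your choice to keep things local is cleaner for the purposes of this paper, but you could equally invoke \cite{BK} without circularity.
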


In (\ref{A_N^(nu)_1}) we have
\begin{align*} & \g_n(N,s) S^{(\nu)}_n(\chi_p,N,p^{-s}) \prod_{q \ne n} S_{n,q}(N, \chi_p(q)q^{-s}) \\
& = \frac{L(\chi_N^*, s-n/2)}{L(\chi_p,s) \prod_{i=1}^{[n/2]}\zeta(2s-2i)} F^{(\nu)}_n(\chi_p,N,s) \prod_{q \ne p} F_{n,q}(N,\chi_p(q)q^{-s}),
\end{align*}
here we understand $L(\chi_N^*,s-n/2) = 1$ is $n$ is odd.

Set
$\Gamma^*_r(s) = \pi^{-r(r-1)/4} \Gamma_r(s)= {\prod_{j=0}^{r-1} \Gamma (s- j/2)}$,
then by Legendre's duplication formula one can write 
\[ \prod_{j=1}^r \Gamma(s-j) = \pi^{-r/2} 2^{sr - r(r+3)/2}\Gamma^*_{r}\Bigl(\frac{s-1}{2} \Bigr) \Gamma^*_r \Bigl(\frac{s}{2} \Bigr). \]
Using it we can rewrite (\ref{A_N^(nu)_1}) as follows. If $n$ is odd then
\begin{align}
 \A_N^{(\nu)}(s) & =  C \widetilde{\psi}(s) |pD_N|^{s/2}  \Gamma \Bigl( \frac{s+\delta_p}{2} \Bigr) \frac{\Gamma^*_{\frac{n-1}{2}} \Bigl( \dfrac{s-1}{2} \Bigr) \Gamma_{\frac{n-1}{2}}^* \Bigl( \dfrac{s}{2} \Bigr)}{\Gamma_a^* \Bigl(\dfrac{s+\delta_p}{2} \Bigr) \Gamma^*_b \Bigl( \dfrac{s-\delta_p}{2} \Bigr)} \,  \label{A_N^(nu)_2_for_odd_n} \\
& \quad \times   F_n^{(\nu)}(\chi_p,N,s) \prod_{q \ne p} F_{n,q}(N,\chi_p(q)q^{-s}), \notag
\end{align}
and if $n$ is even then
\begin{align}
 \A_N^{(\nu)}(s)  & =  C \widetilde{\psi}(s) |pD_N|^{s/2} \Gamma \Bigl( \frac{s+\delta_p}{2} \Bigr) \frac{\Gamma^*_{\frac{n}{2}} \Bigl( \dfrac{s-1}{2} \Bigr) \Gamma_{\frac{n}{2}}^* \Bigl( \dfrac{s}{2} \Bigr)}{\Gamma_a^* \Bigl(\dfrac{s+\delta_p}{2} \Bigr) \Gamma^*_b \Bigl( \dfrac{s-\delta_p}{2} \Bigr)} \label{A_N^(nu)_2_for_even_n} \\
& \quad \times  \pi^{-s/2}  L(\chi_N^*,s-n/2) F_n^{(\nu)}(\chi_p,N,s) \prod_{q \ne p} F_{n,q}(N,\chi_p(q)q^{-s}). \notag
\end{align}

\subsection{Odd degree case}

First we assume that $n$ is odd. We have
\[
 \A_N^{(\nu)}(s)   =  C \widetilde{\psi}(s) |pD_N|^{s/2} g(s) \,  F_n^{(\nu)}(\chi_p,N,s) \prod_{q \ne p} F_{n,q}(N,\chi_p(q)q^{-s}),
\]
with
\[ g(s) = \begin{cases} \frac{\displaystyle \Gamma^*_{\frac{n+1}{2}} \Bigl( \dfrac{s}{2} \Bigr) \Gamma_{\frac{n-1}{2}}^* \Bigl( \dfrac{s}{2} \Bigr) \vrule width0pt depth9pt}{\displaystyle \Gamma_a^* \Bigl(\dfrac{s}{2} \Bigr) \Gamma^*_{n-a} \Bigl( \dfrac{s}{2} \Bigr)\vrule width0pt height15pt} & \text{if $p \equiv 1 \bmod 4$,} \\[22pt]
 \frac{\displaystyle \Gamma^*_{\frac{n+1}{2}} \Bigl( \dfrac{s+1}{2} \Bigr) \Gamma_{\frac{n-1}{2}}^* \Bigl( \dfrac{s-1}{2} \Bigr)\vrule width0pt depth9pt}{\displaystyle \Gamma_a^* \Bigl(\dfrac{s+1}{2} \Bigr) \Gamma^*_{n-a} \Bigl( \dfrac{s-1}{2} \Bigr)\vrule width0pt height15pt} & \text{if $p \equiv 3 \bmod 4$.} \end{cases} \]

\begin{lemma} \label{lem_FE_g(s)}
We have the following equations.
\begin{enumerate}[$(1)$]
\item If $p \equiv 1 \bmod 4$ then $g(n+1-s) = (-1)^{\frac{(a-b)^2-1}{8}} g(s)$.

\item If $p \equiv 3 \bmod 4$ then $g(n+1-s) = (-1)^{\frac{(a-b-2)^2-1}{8}} g(s)$.
\end{enumerate}
\end{lemma}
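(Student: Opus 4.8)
The plan is to reduce $g(s)$ to a ratio of two finite products of ordinary $\Gamma$-factors and then exploit the reflection formula $\Gamma(x)\Gamma(1-x)=\pi/\sin(\pi x)$ together with the substitution $s\mapsto n+1-s$.

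First I would unfold $\Gamma^*_r(z)=\prod_{j=0}^{r-1}\Gamma(z-j/2)$ in numerator and denominator. For $p\equiv 1\bmod 4$ the numerator carries the index multiset $\{0,\dots,\tfrac{n-1}{2}\}\cup\{0,\dots,\tfrac{n-3}{2}\}$ and the denominator $\{0,\dots,a-1\}\cup\{0,\dots,b-1\}$ with $b=n-a$. Writing $a'=\min(a,b)$, $b'=\max(a,b)$ and using $a'+b'=n$, the common $\Gamma$-factors cancel and one is left (after cancellation) with
\[ g(s) = \frac{\prod_{j=a'}^{(n-3)/2}\Gamma\bigl(\tfrac{s-j}{2}\bigr)}{\prod_{j=(n+1)/2}^{b'-1}\Gamma\bigl(\tfrac{s-j}{2}\bigr)}, \]
where the two surviving index windows have the same cardinality $(|a-b|-1)/2$ and are interchanged by the involution $j\mapsto n-1-j$.

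Next I would substitute $s\mapsto n+1-s$, which sends $\tfrac{s-j}{2}\mapsto 1-\tfrac{s-(n-1-j)}{2}$, and apply the reflection formula to each factor of $g(n+1-s)$. Because $j\mapsto n-1-j$ interchanges the two index windows, the surviving $\Gamma$-factors of $g(n+1-s)$ reassemble, up to the $\pi$'s and the sine denominators, into exactly the $\Gamma$-factors of $g(s)$; the $\pi$'s cancel and $g(n+1-s)/g(s)$ collapses to a ratio of products of $\sin\bigl(\pi(\tfrac{s}{2}-\tfrac{j}{2})\bigr)$ over the two windows. Evaluating these sines, an even $j$ contributes $\pm\sin(\pi s/2)$ and an odd $j$ contributes $\pm\cos(\pi s/2)$; since $n$ is odd, $n-1$ is even, so the involution preserves the parity of $j$, whence the transcendental factors $\sin(\pi s/2)$ and $\cos(\pi s/2)$ occur equally often above and below and cancel. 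Thus $g(n+1-s)/g(s)$ is a pure sign, independent of $s$.

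It remains to pin down that sign. Tracking the accumulated $(-1)$'s from the reflection formula and from the sine evaluations over the window $a'\le j\le\tfrac{n-3}{2}$, and setting $d=|a-b|$ (odd since $n$ is odd), the count should collapse to $(-1)^{(d^2-1)/8}=(-1)^{((a-b)^2-1)/8}$, giving (1). For $p\equiv 3\bmod 4$ the two groups of $\Gamma$-factors are evaluated at the shifted arguments $\tfrac{s+1}{2}$ and $\tfrac{s-1}{2}$; the same reflection argument applies, but the half-integer shifts translate the surviving window and alter the parity count, effectively replacing $a-b$ by $a-b-2$ and yielding $(-1)^{((a-b-2)^2-1)/8}$, which is (2). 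The main obstacle is precisely this final sign bookkeeping: one must control the dependence on the parities of $a$, $b$ and on the endpoints of the surviving window, and verify that the $\pm\tfrac12$ shifts in the $p\equiv 3$ case produce exactly the asserted translation of the exponent (with $a-b-2$ rather than $a-b+2$).
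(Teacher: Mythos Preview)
Your reduction is correct and the strategy works, but it is a different route from the paper's. The paper does not unfold $\Gamma^*_r$ into individual $\Gamma$-factors and apply the reflection formula; instead it first uses $\Gamma^*_m(z)/\Gamma^*_{m'}(z)=\Gamma^*_{m-m'}(z-m'/2)$ to collapse $g(s)$ to a ratio $\Gamma^*_M(u)/\Gamma^*_M(u+r)$ with an \emph{integer} shift $r$, and then invokes Mizumoto's identity
\[
\frac{\Gamma_M(u)}{\Gamma_M(u+r)}=(-1)^{Mr}\,\frac{\Gamma_M\bigl(\tfrac{M+1}{2}-r-u\bigr)}{\Gamma_M\bigl(\tfrac{M+1}{2}-u\bigr)}\qquad(r\in\Z),
\]
which packages the whole reflection-and-sign argument into the single factor $(-1)^{Mr}$. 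With $M=\tfrac{n+1}{2}-a$ (or $\tfrac{n-1}{2}-a$) and the appropriate $r$, the exponent $Mr$ reduces immediately to $((a-b)^2-1)/8$. For part~(2) the paper multiplies numerator and denominator of $g(s)$ by $\Gamma(\tfrac{s+1}{2})\Gamma(\tfrac{s}{2})$, which turns the $p\equiv 3$ expression into the $p\equiv 1$ form with $(n,a,b)$ replaced by $(n+2,a,b+2)$; part~(1) then gives the sign $(-1)^{((a-b-2)^2-1)/8}$ directly, with no new computation.

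The gap in your proposal is exactly the one you flag: you do not carry out the sign count, and your sketch for~(2) (``the half-integer shifts effectively replace $a-b$ by $a-b-2$'') is not an argument. Your sin/cos parity bookkeeping is also more complicated than necessary. Since $n$ is odd, the shift between paired arguments is $(n-1-j)-j=n-1-2j$, an \emph{even} integer, so
\[
\frac{\sin\bigl(\pi\tfrac{s-j}{2}\bigr)}{\sin\bigl(\pi\tfrac{s-(n-1-j)}{2}\bigr)}=(-1)^{(n-1)/2-j}
\]
is already a pure sign, without separating even and odd $j$. The total sign is then $\prod_{j=a'}^{(n-3)/2}(-1)^{(n-1)/2-j}=(-1)^{L(L+1)/2}$ with $L=(|a-b|-1)/2$, which is $(-1)^{((a-b)^2-1)/8}$. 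For~(2) you would either redo this with the shifted windows (feasible but fiddly) or, more cleanly, adopt the paper's multiplication trick to reduce to~(1).
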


At least the case for $p \equiv 1 \bmod 4$, this Lemma is already proved essentially in \cite[p.284]{BK}, however here we prove it using the following lemma due to Mizumoto.

\begin{lemma}[{\cite[Lemma 6.1]{Miz}}] \label{lem_Mizumoto_Gamma_FE}
 For $r \in \Z$ we have
\[ \frac{\Gamma_m(s)}{\Gamma_m(s+r)} = (-1)^{mr} \frac{ \Gamma_m \Bigl( \dfrac{m+1}{2}-r-s \Bigr)}{\Gamma_m \Bigl(\dfrac{m+1}{2}-s \Bigr)}. \]

\end{lemma}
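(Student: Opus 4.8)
The plan is to peel the vector-valued gamma factor $\Gamma_m$ apart into a product of ordinary gammas and reduce everything to the classical reflection identity for a single $\Gamma$. Unwinding the definition $\Gamma_m(s) = \pi^{m(m-1)/4}\prod_{j=0}^{m-1}\Gamma(s-j/2)$, the powers of $\pi$ cancel between the numerator and denominator on each side of the asserted equation, so it suffices to prove
\[ \prod_{j=0}^{m-1}\frac{\Gamma(s-j/2)}{\Gamma(s+r-j/2)} = (-1)^{mr}\prod_{j=0}^{m-1}\frac{\Gamma(\tfrac{m+1}{2}-r-s-j/2)}{\Gamma(\tfrac{m+1}{2}-s-j/2)}. \]
I would first note that replacing $(r,s)$ by $(-r,s+r)$ turns this identity into its own reciprocal (using $(-1)^{-mr}=(-1)^{mr}$), so it is enough to treat $r\ge 0$, where the functional equation $\Gamma(z+1)=z\Gamma(z)$ gives $\Gamma(z+r)/\Gamma(z)=(z)_r$ with $(z)_r=\prod_{i=0}^{r-1}(z+i)$.

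The engine of the proof is the scalar ($m=1$) identity, valid for every $\sigma$ and $r\ge 0$,
\[ \frac{\Gamma(\sigma)}{\Gamma(\sigma+r)} = (-1)^r\,\frac{\Gamma(1-r-\sigma)}{\Gamma(1-\sigma)}, \]
which follows from the elementary Pochhammer computation $(1-r-\sigma)_r=\prod_{i=0}^{r-1}(1-r-\sigma+i)=(-1)^r\prod_{k=1}^{r}(\sigma+k-1)=(-1)^r(\sigma)_r$. The remaining work is purely combinatorial reindexing: applying $j\mapsto m-1-j$ on the right-hand side converts the denominator argument $\tfrac{m+1}{2}-s-j/2$ into $1-(s-j/2)$ and the numerator argument $\tfrac{m+1}{2}-r-s-j/2$ into $1-r-(s-j/2)$, because the multiset $\{\tfrac{m+1}{2}-\tfrac{j}{2}:0\le j\le m-1\}$ equals $\{1+\tfrac{j}{2}:0\le j\le m-1\}$. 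Applying the scalar identity with $\sigma=s-j/2$ to each factor then replaces the $j$-th factor by $(-1)^r\,\Gamma(s-j/2)/\Gamma(s+r-j/2)$, so the product over $j$ contributes a global sign $(-1)^{mr}$ and matches the left-hand product term by term; the two factors $(-1)^{mr}$ cancel and the identity follows.

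There is no genuine obstacle here: once the definition is unwound the argument is elementary bookkeeping. The only step demanding care is the reindexing, where I must verify that the half-integer shifts $j/2$ reassemble correctly under $j\mapsto m-1-j$ and that the per-factor signs accumulate to exactly $(-1)^{mr}$ rather than an off-by-one power of $-1$. Since the statement is quoted as \cite[Lemma 6.1]{Miz}, one could simply cite Mizumoto, but the self-contained computation above recovers it directly.
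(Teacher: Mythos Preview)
Your proof is correct. The paper does not give its own proof of this lemma; it simply quotes the result from \cite[Lemma~6.1]{Miz} and uses it as a black box in the proof of Lemma~\ref{lem_FE_g(s)}. Your argument---unwinding $\Gamma_m$ into a product of ordinary gammas, reducing to the scalar Pochhammer identity $(1-r-\sigma)_r=(-1)^r(\sigma)_r$, and reindexing $j\mapsto m-1-j$---is the standard self-contained derivation and is fully rigorous. The only delicate point you flagged, that the half-integer shifts reassemble correctly under the reindexing so that $\tfrac{m+1}{2}-s-\tfrac{m-1-j}{2}=1-(s-\tfrac{j}{2})$, is verified by direct computation, and the accumulated sign is indeed $(-1)^{mr}$ with no off-by-one error.
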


\begin{proof}[{Proof of Lemma \emph{\ref{lem_FE_g(s)}}}] First note that for $n \ge m$ we have
\[ \dfrac{\Gamma^*_n(s)}{\Gamma^*_m(s)} = \Gamma^*_{n-m}\Bigl( s - \dfrac{m}{2} \Bigr). \]
 
\noindent
(1) \  It suffices to consider the case $a \le (n-1)/2$, since it is symmetric for $a$ and $n-a$. By the above remark we can rewrite $g(s)$ as
\begin{equation} \tag{$*$} 
g(s)= \frac{\Gamma^*_{\frac{n+1}{2}-a}\Bigl( \dfrac{s}{2}-\dfrac{a}{2} \Bigr)}{\Gamma^*_{\frac{n+1}{2}-a} \Bigl(\dfrac{s}{2} - \dfrac{n-1}{4} \Bigr)}
\end{equation}
or
\begin{equation} \tag{$**$}
g(s) = \dfrac{\Gamma^*_{\frac{n-1}{2}-a}\Bigl( \dfrac{s}{2}-\dfrac{a}{2} \Bigr)}{\Gamma^*_{\frac{n-1}{2}-a} \Bigl(\dfrac{s}{2} - \dfrac{n+1}{4} \Bigr)}
\end{equation}
Note that either $\dfrac{a}{2}-\dfrac{n-1}{4}$ or $\dfrac{a}{2}-\dfrac{n+1}{4}$ is an integer. In the former case we apply Lemma \ref{lem_Mizumoto_Gamma_FE} to $(*)$ and get
\begin{align*}  g(s)&  = (-1)^{(\frac{n+1}{2}-a)(\frac{a}{2}-\frac{n-1}{4})} \frac{\Gamma^*_{\frac{n-1}{2}-a} \Bigl( \dfrac{n+3}{4}-\dfrac{a}{2} - \dfrac{s}{2} + \dfrac{n-1}{4} \Bigl)}{\Gamma^*_{\frac{n+1}{2}-a} \Bigl( \dfrac{n+3}{4}-\dfrac{a}{2} - \dfrac{s}{2} + \dfrac{a}{2} \Bigr)}\\
& =(-1)^{(1-(a-b)^2)/8} \frac{\Gamma^*_{\frac{n-1}{2}-a} \Bigl( \dfrac{n+1-s}{2} - \dfrac{a}{2} \Bigr)}{\Gamma^*_{\frac{n-1}{2}-a} \Bigl( \dfrac{n+1-s}{2}- \dfrac{n-1}{4} \Bigr)},
\end{align*}
in the latter case we apply Lemma \ref{lem_Mizumoto_Gamma_FE} to $(**)$ to get
\begin{align*}  g(s)&  = (-1)^{(\frac{n-1}{2}-a)(\frac{a}{2}-\frac{n+1}{4})} \frac{\Gamma^*_{\frac{n-1}{2}-a} \Bigl( \dfrac{n+1}{4}-\dfrac{a}{2} - \dfrac{s}{2} + \dfrac{n+1}{4} \Bigl)}{\Gamma^*_{\frac{n-1}{2}-a} \Bigl( \dfrac{n+1}{4}-\dfrac{a}{2} - \dfrac{s}{2} + \dfrac{a}{2} \Bigr)}\\
& =(-1)^{(1-(a-b)^2)/8} \frac{\Gamma^*_{\frac{n-1}{2}-a} \Bigl( \dfrac{n+1-s}{2} - \dfrac{a}{2} \Bigr)}{\Gamma^*_{\frac{n-1}{2}-a} \Bigl( \dfrac{n+1-s}{2}- \dfrac{n+1}{4} \Bigr)}.
\end{align*}
This proves our assertion.

\bigskip
\noindent
(2) Assume $p \equiv 3 \bmod 4$. Multiplying $\Gamma\Bigl( \dfrac{s+1}{2} \Bigr) \Gamma\Bigl( \dfrac{s}{2} \Bigr)$ for both the numerator and the denominator, we can write
\[ g(s) = \frac{\Gamma_{\frac{n+3}{2}}^* \Bigl( \dfrac{s+1}{2} \Bigr) \Gamma_{\frac{n+1}{2}}^* \Bigl( \dfrac{s+1}{2} \Bigr)}{\Gamma_{a}^* \Bigl( \dfrac{s+1}{2} \Bigr) \Gamma_{n+2-a}^* \Bigl( \dfrac{s+1}{2} \Bigr)}. \]
Thus if $a \le (n+1)/2$ we can write
\[ g(s) = \dfrac{ \Gamma^*_{\frac{n+1}{2}-a}\Bigl( \dfrac{s+1}{2} -\dfrac{a}{2} \Bigr)}{\Gamma^*_{\frac{n+1}{2}-a} \Bigl( \dfrac{s+1}{2} - \dfrac{n+3}{4} \Bigl)} = \dfrac{\Gamma^*_{\frac{n+3}{2}-a}\Bigl(\dfrac{s+1}{2}-\dfrac{a}{2} \Bigl)}{\Gamma^*_{\frac{n+3}{2}-a} \Bigl( \dfrac{s+1}{2}-\dfrac{n+1}{4} \Bigl)}. \]
Since either $\dfrac{a}{2}-\dfrac{n+3}{4}$ or $\dfrac{a}{2}-\dfrac{n+1}{4}$ is an integer, we can apply Lemma \ref{lem_Mizumoto_Gamma_FE} and get our assertion.
The case for $a \ge (n+3)/2$ is similar.
\end{proof}

We set
\[ \zeta_\infty(N) = h_\infty(N) \bigl(\det N, (-1)^{\frac{n-1}{2}} \det N \bigr)_\infty (-1,-1)^{\frac{n^2-1}{8}}_\infty, \]
then $\prod_{q \le \infty} \zeta_q(N) = 1$ holds. It is easy to show that $\zeta_\infty(N) = (-1)^{\frac{(a-b)^2-1}{8}}$, 
thus we can write Lemma \ref{lem_FE_g(s)} as $g(n+1-s) = \chi_p(-1)^{\frac{n-1}{2}+b} \zeta_\infty(N) g(s)$.

In the description
\[ \mathbb{A}^{(\nu)}_N(s) = C \widetilde{\psi}(s)g(s) \, |p D_N|^{s/2}  F_n^{(\nu)}(\chi_p,N,s) \prod_{q \ne p} F_{n,q}(N,\chi_p(q)q^{-s}),\]
we apply Theorem \ref{thm_FE_unram_Siegel_series} to $\prod_{q \ne p} F_{n,q}(N,\chi_p(q)q^{-s})$. Recall that $D_N = p^{d_p(N)} D'_N$, thus we have $\prod_{q \ne p} q^{e_q(N)} = |D'_N|$. By Theorem \ref{thm_FE_Siegel_Eisenstein_level_p} $\A^{(n-\nu)}_N(n+1-s) = \A^{(\nu)}_N(s)$ holds, therefore we have
\begin{align*}  & |p D_N|^{(n+1-s)/2} F_n^{(n-\nu)}(\chi_p,N,n+1-s) \, \chi_p(|D'_N|) \,| D'_N|^{-\frac{n+1}{2}+s} \chi_p(-1)^{\frac{n-1}{2}+b} \\
& = |p D_N|^{s/2} F_n^{(\nu)}(\chi_p,N,s).
\end{align*}
Since $D'_N = (-1)^{\frac{n-1}{2} + b} |D'_N|$, we have proved Theorem \ref{thm_FE_ramified_Siegel_series} for an odd $n$. 

\subsection{Even degree case}

For the case that $n$ is even,  we have to rewrite $L(\chi_N^*,s-n/2)$ in (\ref{A_N^(nu)_2_for_even_n}) by using the completed $L$-function $\xi(\chi_N^*,s-n/2)$.

First assume that $p \equiv 1 \bmod 4$. Then $\chi_N^*(-1) = (-1)^{n/2-a}$, thus
\begin{align*} &  \xi(\chi_N^*, s- n/2)  \\
& = \begin{cases} \left( \dfrac{|\fd_N^*|}{\pi} \right)^{\frac{s}{2}-\frac{n}{4}} \Gamma \Bigl( \dfrac{s}{2}-\dfrac{n}{4} \Bigr) L(\chi_N^*, s-n/2) & \text{$n/2-a$ is even,} \vrule width0pt depth13pt \\
\left( \dfrac{|\fd_N^*|}{\pi} \right)^{\frac{s}{2}-\frac{n}{4}+\frac{1}{2}} \Gamma \Bigl( \dfrac{s}{2}-\dfrac{n}{4} + \dfrac{1}{2} \Bigr) L(\chi_N^*, s-n/2) & \text{$n/2-a$ is odd.} \end{cases} 
\end{align*}
Then we can write
\begin{align*} 
\A^{(\nu)}_N(s) & = C  \widetilde{\psi}(s) \, g_1(s)  \left| \frac{p D_N }{\fd_N^*} \right|^{s/2} \xi(\chi_N^*, s- n/2 ) \\ 
& \qquad \times 
F_n^{(\nu)}(\chi_p,N,s) \prod_{q \ne p} F_{n,q}(\chi_p,N, s), 
\end{align*}
with
\[ g_1(s) = \frac{ \Gamma^*_{\frac{n}{2}} \Bigl( \dfrac{s}{2} \Bigr) \Gamma^*_{\frac{n}{2}} \Bigl( \dfrac{s}{2} \Bigr)}{\Gamma_a^* \Bigl( \dfrac{s}{2} \Bigr) \Gamma^*_{n-a} \Bigl( \dfrac{s}{2} \Bigr)} \quad \text{or} \quad
\frac{ \Gamma^*_{\frac{n}{2}+1} \Bigl( \dfrac{s}{2} \Bigr) \Gamma^*_{\frac{n}{2}-1} \Bigl( \dfrac{s}{2} \Bigr)}{\Gamma_a^* \Bigl( \dfrac{s}{2} \Bigr) \Gamma^*_{n-a} \Bigl( \dfrac{s}{2} \Bigr)} \]
according as $n/2-a$ is even or odd.

Next assume that $p \equiv 3 \bmod 4$. Then similarly we have
\begin{align*} 
\A^{(\nu)}_N(s) & = C  \widetilde{\psi}(s) \, g_2(s)  \left| \frac{p D_N }{\fd_N^*} \right|^{s/2} \xi(\chi_N^*, s- n/2 ) \\
& \qquad \times  F_n^{(\nu)}(\chi_p,N,s) \prod_{q \ne p} F_{n,q}(\chi_p,N, s), 
\end{align*}
with
\[ g_2(s) = \frac{ \Gamma^*_{\frac{n}{2}} \Bigl( \dfrac{s+1}{2} \Bigr) \Gamma^*_{\frac{n}{2}} \Bigl( \dfrac{s-1}{2} \Bigr)}{\Gamma_a^* \Bigl( \dfrac{s+1}{2} \Bigr) \Gamma^*_{n-a} \Bigl( \dfrac{s-1}{2} \Bigr)} \quad \text{or} \quad
\frac{ \Gamma^*_{\frac{n}{2}+1} \Bigl( \dfrac{s+1}{2} \Bigr) \Gamma^*_{\frac{n}{2}-1} \Bigl( \dfrac{s-1}{2} \Bigr)}{\Gamma_a^* \Bigl( \dfrac{s+1}{2} \Bigr) \Gamma^*_{n-a} \Bigl( \dfrac{s-1}{2} \Bigr)} \]
according as $n/2-a$ is even or odd.

Similar to Lemma \ref{lem_FE_g(s)} one can prove the following.

\begin{lemma} \label{lem_FE_g_1_and_g_2}
We have $g_1(n+1-s) = g_1(s)$ and $g_2(n+1-s) = g_2(s)$.
\end{lemma}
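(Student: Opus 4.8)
The plan is to follow the proof of Lemma \ref{lem_FE_g(s)} almost verbatim, the one new feature being that the sign produced by Mizumoto's lemma now collapses to $+1$. Since in $g_1$ both $\Gamma^*$-arguments equal $s/2$, the quotient $g_1$ is symmetric under $a \leftrightarrow n-a$ and I may assume $a \le n/2$; in $g_2$ the shifts $(s\pm 1)/2$ break this symmetry, so there I would simply treat the two ranges $a \le n/2$ and $a \ge n/2$ separately, the arguments being entirely analogous. In each case I would first collapse the four-factor expression to a single quotient of two $\Gamma^*$-functions \emph{of one and the same index}, using repeatedly the identity $\Gamma^*_N(t)/\Gamma^*_M(t) = \Gamma^*_{N-M}(t - M/2)$ (valid for $N \ge M$) recorded at the start of the proof of Lemma \ref{lem_FE_g(s)}. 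For instance, when $n/2-a$ is even one pairs the two factors $\Gamma^*_{n/2}$ in the numerator of $g_1$ against $\Gamma^*_a$ and $\Gamma^*_{n-a}$ to obtain $g_1(s) = \Gamma^*_m(s/2 - a/2)/\Gamma^*_m(s/2 - n/4)$ with $m = n/2 - a$; the odd case and the four cases for $g_2$ reduce similarly, the index $m$ coming out as $n/2 - 1 - a$, $n/2 - a$, $n/2 + 1 - a$, or $a - n/2$ (possibly $\mp 1$) as appropriate.

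Next I would apply Lemma \ref{lem_Mizumoto_Gamma_FE}, which passes unchanged from $\Gamma_m$ to $\Gamma^*_m$ because the factor $\pi^{-m(m-1)/4}$ cancels between numerator and denominator of equal index. Writing the reduced quotient as $\Gamma^*_m(u)/\Gamma^*_m(u+r)$, one checks that $r \in \Z$ in every case, and this integrality is exactly where the parity hypothesis on $n/2-a$ is used. Mizumoto's lemma then rewrites the quotient as $(-1)^{mr}\,\Gamma^*_m(\tfrac{m+1}{2} - r - u)/\Gamma^*_m(\tfrac{m+1}{2} - u)$, and a direct computation of the two resulting arguments shows they are precisely the arguments occurring in $g_i(n+1-s)$; this is the same bookkeeping as in $(*)$ and $(**)$ of Lemma \ref{lem_FE_g(s)}, now carried out with $s$ replaced by $n+1-s$.

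The essential point, and what distinguishes this lemma from Lemma \ref{lem_FE_g(s)}, is the sign. In every one of the cases the index $m$ turns out to be \emph{even}: when $n/2 - a$ is even one has $m = n/2 - a$ or $m = a - n/2$, both even, and when $n/2 - a$ is odd the reduction shifts the index by one so that $m$ is again even. Hence $(-1)^{mr} = 1$ irrespective of $r$, and I obtain the clean invariance $g_i(n+1-s) = g_i(s)$ rather than invariance up to a sign. I expect the only real work to be the routine but slightly fiddly verification that $r$ is integral and that the shifted arguments match $g_i(n+1-s)$, especially in the $p \equiv 3 \bmod 4$ case, where the half-integer shifts $(s \pm 1)/2$ must be tracked carefully through the reduction.
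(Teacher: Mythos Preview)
Your proposal is correct and follows exactly the approach the paper indicates (the paper merely writes ``Similar to Lemma~\ref{lem_FE_g(s)} one can prove the following'' and gives no details). You have correctly isolated the one genuinely new feature: after reducing $g_i$ to a quotient $\Gamma^*_m(u)/\Gamma^*_m(u+r)$ the index $m$ is always even (because the very case-split in the definition of $g_1,g_2$ by the parity of $n/2-a$ forces it), so Mizumoto's sign $(-1)^{mr}$ collapses to $+1$.
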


We set $\delta_N^* = 0$ or $1$ according as $\chi_N^*(-1) = 1$ or $-1$ respectively, then the Gauss sum of $\chi_N^*$ can be calculated as $G(\chi_N^*) = (\sqrt{-1})^{\delta_N^*} \sqrt{|\fd^*_N|}$, thus 
the functional equation $\xi(\chi_N^*,s) = \xi(\chi_N^*,1-s)$ holds. Finally note that $e_q(N)$ for $q \ne p$ is even by definition, thus $\chi_p(q)^{e_q(N)} = 1$. Using $\A_N^{(n-\nu)}(n+1-s) = \A_N^{(\nu)}(s)$, one can prove Theorem \ref{thm_FE_ramified_Siegel_series} for even $n$, since we have
\[ \fd_N^* = \begin{cases}  p \fd_N & \text{$d_p(N)$ is even,} \\ p^{-1} \fd_N & \text{$d_p(N)$ is odd.} \end{cases} \]
Now our proof is finished.

\section{Recursion formulas of ramified Siegel series} \label{Sec_recursion}

Our aim is to give explicit formulas of $S^{w_\nu}_n(\psi_p,N,s)$ for any $0 \le \nu \le n$. By (\ref{ramified_standard_base_Hecke_eigen}) it suffices to find the formulas for $S^{(\nu)}_n(\psi,N,s)$. In order to find the recursion formula of $S^{(\nu)}_n(\psi,N,s)$,  we use the inductive relations of ramified Siegel series analogous to (\ref{inductive_relation_Katsurada_2}).

Let $\alpha_i \in \Z_p^\times$ for $1 \le i \le n$ and $0 \le u_1 \le u_2 \le \cdots \le u_n$ be integers. We put $N_0 = \diag(\alpha_i p^{u_1}, \ldots, \alpha_{n-1} p^{u_{n-1}}) \in \Sym^{n-1}(\Z_p)^*$. We also  define the elements in $\Sym^n(\Z_p)^*$ by $N = N_0 \perp \langle \alpha_n p^{u_n} \rangle$ and $N' = N_0 \perp \langle \alpha_n p^{u_n+2} \rangle$.

\subsection{The case of trivial character}
First we treat the case $\psi = \chi_0$. In this case the inductive relation of the characteristic ramified Siegel series comes from the ordinary one, that was essentially given by Katsurada.

\begin{theorem} \label{thm_inductive_relation_chi_0}
We define $H_n(N,s)$ as follows. If $n$ is even we put
\[ H_n(N,s) = \zeta_p(N_0) p^{\frac{1}{2} d_p(N_0) + (\frac{n+1}{2}-s)(u_n - a_N + 2)} \frac{(1-p^{n-2s})(1-\chi_N(p) p^{s-\frac{n}{2} -1})}{1-\chi_N(p)p^{\frac{n}{2}-s}},\] 
and if $n$ is odd we put
\[ H_n(N,s) = -\zeta_p(N) p^{\frac{1}{2} d_p(N) + (\frac{n}{2}-s)(u_n - a_{N_0} + 2)} \frac{(1-p^{n+1-2s})(1-\chi_{N_0}(p) p^{s-\frac{n-1}{2}})}{1-\chi_{N_0}(p)p^{\frac{n+1}{2}-s}}. \]
Then for $0 \le \nu \le n$, we have the inductive relation
\[ S_n^{(\nu)}(\chi_0,N',s) - S_n^{(\nu)}(\chi_0,N,s) = H_n(N,s) S^{(\nu)}_{n-1}(\chi_0,N_0,s). \]
\end{theorem}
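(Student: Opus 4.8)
The plan is to reduce the assertion to the analogous relation for the cusp-indexed ramified Siegel series $S^{w_\nu}_n(\chi_0,N,s)$ and then transport it to the $U(p)$-eigenbasis through the transition coefficients of \eqref{ramified_Hecke_eigen_standard_base}. The structural point that makes this transport clean is that the coefficients $b_{\chi_0}(s)_{\nu r}$ computed in Proposition~\ref{prop_explicit_b_psi} depend only on $\nu$, $r$ and $s$, and in particular neither on $N$ nor on the ambient degree. So, granting the \emph{cusp-wise relation}
\[ S^{w_\nu}_n(\chi_0,N',s)-S^{w_\nu}_n(\chi_0,N,s)=H_n(N,s)\,S^{w_\nu}_{n-1}(\chi_0,N_0,s)\qquad(0\le\nu\le n-1) \]
with $H_n(N,s)$ independent of $\nu$ (both sides vanishing for $\nu=n$, since $S^{w_n}_n\equiv1$), I would substitute \eqref{ramified_Hecke_eigen_standard_base}, subtract the $N$- and $N'$-instances term by term, and factor out $H_n(N,s)$. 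The $r=n$ contribution drops out, and because the very same coefficients $b_{\chi_0}(s)_{\nu r}$ reassemble the degree-$(n-1)$ eigenfunction, the right-hand side collapses to $H_n(N,s)\,S^{(\nu)}_{n-1}(\chi_0,N_0,s)$, which is exactly the claim.

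It then remains to prove the cusp-wise relation; this is the step the introduction calls ``easily deduced from Katsurada''. For the trivial character $\widetilde{\psi}_{w_\nu}(R)=1$, so $S^{w_\nu}_n(\chi_0,N,s)$ is precisely the restriction of the ordinary Siegel series $S_{n,p}(N,s)$ to the stratum $\rank(C\bmod p)=\nu$, whence $S_{n,p}(N,s)=\sum_{\nu=0}^n S^{w_\nu}_n(\chi_0,N,s)$. I would show that Katsurada's inductive relation refines stratum by stratum: summing the proposed cusp-wise identity over $\nu$ and using the stratum decomposition in degrees $n$ and $n-1$ recovers \eqref{inductive_relation_Katsurada_2} verbatim, which serves both as a consistency check and as a guide for the localization. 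Concretely, I would start from \eqref{inductive_relation_Katsurada_1}, carry out the decomposition of the defining sum by $\rank(C\bmod p)$, and verify that the passage from \eqref{inductive_relation_Katsurada_1} to \eqref{inductive_relation_Katsurada_2} is compatible with this filtration, producing the prefactor $H_n(N,s)$ independently of $\nu$. Finally, since $H_n$ is $\nu$-independent, I read it off from the case $\nu=0$, where the closed formula for $S^{(0)}_n(\chi_0,N,s)$ in Corollary~\ref{cor_explicit_formula_S_n^(n)} is available; substituting it and bookkeeping the powers of $p$ (through $d_p(N_0)$, $u_n$, $a_N$ and $\chi_N(p)$) yields the stated even- and odd-degree expressions for $H_n(N,s)$.

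The main obstacle is the stratum-wise refinement of Katsurada's relation with a $\nu$-independent prefactor: one must confirm that the reduction step behind \eqref{inductive_relation_Katsurada_2} localizes to each value of $\rank(C\bmod p)$, either by revisiting the recursion underlying \eqref{inductive_relation_Katsurada_1} and checking that it preserves this rank, or by deriving the stratified form directly from the functional equations of Section~\ref{Sec_functional_equation}. The degree- and $N$-independence of the $b_{\chi_0}(s)_{\nu r}$ and the explicit evaluation of $H_n(N,s)$ are then routine. As a self-contained alternative I would use the scaling relation $S^{(\nu)}_n(\chi_0,pN,s)=p^{(n-\nu)(\frac{n+\nu+1}{2}-s)}S^{(\nu)}_n(\chi_0,N,s)$, which follows from the $U(p)$-eigenvalue $p^{l(s,\nu)}$ in \eqref{eq_l(s,nu)} together with the homogeneity of Shimura's confluent hypergeometric function: it recovers each eigenfunction series from $\{S_{n,p}(p^jN,s)\}_{0\le j\le n}$ by Lagrange interpolation in the distinct factors $p^{(n-r)(\frac{n+r+1}{2}-s)}$, after which \eqref{inductive_relation_Katsurada_2} applied to each $p^jN$ (where $(p^jN)'=p^jN'$) gives the result, at the price of a heavier computation.
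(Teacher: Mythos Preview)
Your ``self-contained alternative'' at the end is essentially the paper's proof, and you should promote it to the main line. The paper argues as follows. First, from Katsurada's relation \eqref{inductive_relation_Katsurada_1} it substitutes $s\mapsto n+1-s$ and applies the functional equation of the \emph{ordinary} Siegel series (Theorem~\ref{thm_FE_unram_Siegel_series}) to obtain
\[
S_{n,p}(N',s)-S_{n,p}(N,s)=H_n(N,s)\,S_{n-1,p}(N_0,s)
\]
with the stated $H_n$; so the explicit form of $H_n(N,s)$ is read off here, \emph{before} any eigen-decomposition, not from the $\nu=0$ formula of Corollary~\ref{cor_explicit_formula_S_n^(n)} as you suggest. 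Second, it writes the level-$1$ Eisenstein series as $E^n_k=\sum_\nu a_\nu(s)\,E^{n,(\nu)}_{k,\chi_0}$ with all $a_\nu(s)\neq0$ (this nonvanishing is not automatic; the paper invokes B\"ocherer's result that $\{U(p)^iE^n_k\}_{0\le i\le n}$ spans the Eisenstein space), giving $S_{n,p}(N,s)=\sum_\nu a_\nu(s)\,S^{(\nu)}_n(\chi_0,N,s)$. Third, it proves as a separate lemma exactly your scaling relation $S^{(\nu)}_n(\chi_0,pN,s)=p^{r_n(s,\nu)}S^{(\nu)}_n(\chi_0,N,s)$ (from the $U(p)$-eigenvalue and the homogeneity of $\Xi_n$), observes $H_n(pN,s)=p^{n-s}H_n(N,s)$ and $r_n(s,\nu)=r_{n-1}(s,\nu)+(n-s)$, and then simply matches eigencomponents under $N\mapsto pN$ on both sides of the summed identity. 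No Lagrange interpolation in $\{S_{n,p}(p^jN,s)\}_j$ is carried out; the distinctness of the $r_n(s,\nu)$ (for generic $s$) already separates the terms, so the computation is lighter than you anticipate.

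Your primary route via a cusp-wise identity $S^{w_\nu}_n(\chi_0,N',s)-S^{w_\nu}_n(\chi_0,N,s)=H_n(N,s)\,S^{w_\nu}_{n-1}(\chi_0,N_0,s)$ is logically equivalent to the theorem (precisely because the $b_{\chi_0}(s)_{\nu r}$ are degree-independent, as you note), but you correctly flag the obstacle: localizing Katsurada's integral recursion to each stratum $\rank(C\bmod p)=\nu$ is not carried out, and there is no evident reason the reduction behind \eqref{inductive_relation_Katsurada_1} respects that filtration. The paper avoids this entirely by passing to the $U(p)$-eigenbasis from the outset, where the separation is effected by the scaling symmetry rather than by any geometric decomposition of the domain of integration.
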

We understand $S^{(n)}_{n-1}(\chi_0,N_0,s) = 0$, indeed if $\nu = n$ then $S_n^{(n)}(\chi_0,N',s) = S_n^{(n)}(\chi_0,N,s) = 1$.

For the proof we need the following.
\begin{lemma} \label{lem_EV_ramified_Siegel}
Let $r_n(s,\nu) = \dfrac{n(n+1)}{2}-\dfrac{\nu(\nu+1)}{2} -(n-\nu)s$.
Then for $\psi = \chi_0$ or $\chi_p$ we have
\[ S_n^{(\nu)}(\psi,pN,s) = p^{r_n(s,\nu)} S_n^{(\nu)}(\psi,N,s). \]
\end{lemma}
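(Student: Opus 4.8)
The plan is to avoid working directly with the sum defining $S^{(\nu)}_n(\psi,N,s)$, since under $N\mapsto pN$ the ramified prime is scaled nontrivially and the individual terms do not transform by a single power of $p$. Instead I would exploit that $E^{n,(\nu)}_{k,\psi}(Z,s)$ is by construction a $U(p)$-eigenfunction with eigenvalue $p^{l(s,\nu)}$, $l(s,\nu)=\frac{n(k-s)}{2}+s\nu-\frac{\nu(\nu+1)}{2}$ (see \eqref{eq_l(s,nu)} and \eqref{U(p)-eigen_Eisenstein}), and read the scaling of $S^{(\nu)}_n$ off the scaling of the whole Fourier coefficient. Writing $E^{n,(\nu)}_{k,\psi}(Z,s)=\sum_N A^{(\nu)}(N;Y,s)\epi(NX)$ and using that in Proposition \ref{prop_Fourier_exp_Eisenstein_level_p_1} the archimedean factor $\det(Y)^{(s-k)/2}\Xi_n(Y,N,\frac{s+k}{2},\frac{s-k}{2})$ and the unramified Euler product $\prod_{q\ne p}S_{n,q}(N,\psi(q)q^{-s})$ are independent of the cusp, relation \eqref{ramified_Hecke_eigen_standard_base} gives, for nondegenerate $N$,
\[ A^{(\nu)}(N;Y,s)=\det(Y)^{(s-k)/2}\Xi_n\Bigl(Y,N,\tfrac{s+k}{2},\tfrac{s-k}{2}\Bigr)S^{(\nu)}_n(\psi,N,p^{-s})\prod_{q\ne p}S_{n,q}(N,\psi(q)q^{-s}). \]
Comparing the coefficient of $\epi(NX)$ on both sides of $U(p)E^{n,(\nu)}_{k,\psi}=p^{l(s,\nu)}E^{n,(\nu)}_{k,\psi}$, and using the action of $U(p)$ on Fourier coefficients recalled in Section \ref{Sec_Siegel_Eisenstein}, yields the basic identity $A^{(\nu)}(pN;p^{-1}Y,s)=p^{l(s,\nu)}A^{(\nu)}(N;Y,s)$.

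First I would determine how each of the three factors above transforms under $(Y,N)\mapsto(p^{-1}Y,pN)$. The determinant power contributes $\det(p^{-1}Y)^{(s-k)/2}=p^{-n(s-k)/2}\det(Y)^{(s-k)/2}$. For the unramified product the point is that for every $q\ne p$ the scalar $p$ is a unit in $\Z_q$: substituting $R\mapsto p^{-1}R$ in the defining sum of $S_{n,q}$ and using that $\delta_q$ is invariant under scaling by $\Z_q^\times$ shows $S_{n,q}(pN,\psi(q)q^{-s})=S_{n,q}(N,\psi(q)q^{-s})$, so the whole product is unchanged. For the confluent hypergeometric factor I would invoke the homogeneity of Shimura's function from \cite{Shi1}: via its integral representation, the substitution $X\mapsto p^{-1}X$ shows it transforms by a single power of $p$ under simultaneous scaling of $Y$ and $N$, giving $\Xi_n(p^{-1}Y,pN,\frac{s+k}{2},\frac{s-k}{2})=p^{\,ns-n(n+1)/2}\,\Xi_n(Y,N,\frac{s+k}{2},\frac{s-k}{2})$, where $\frac{s+k}{2}+\frac{s-k}{2}=s$ is the total weight entering the exponent.

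Substituting these scalings into $A^{(\nu)}(pN;p^{-1}Y,s)=p^{l(s,\nu)}A^{(\nu)}(N;Y,s)$ and cancelling the common factor $\det(Y)^{(s-k)/2}\Xi_n(Y,N,\cdots)\prod_{q\ne p}S_{n,q}(N,\cdots)$ would leave
\[ p^{-n(s-k)/2+ns-n(n+1)/2}\,S^{(\nu)}_n(\psi,pN,p^{-s})=p^{l(s,\nu)}\,S^{(\nu)}_n(\psi,N,p^{-s}). \]
It then remains only to simplify the exponent: inserting $l(s,\nu)=\frac{n(k-s)}{2}+s\nu-\frac{\nu(\nu+1)}{2}$, the powers of $p$ involving $k$ cancel and one is left with precisely $r_n(s,\nu)=\frac{n(n+1)}{2}-\frac{\nu(\nu+1)}{2}-(n-\nu)s$. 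Since the eigenvalue has the same shape for $\psi=\chi_0$ and $\psi=\chi_p$, the argument treats both characters uniformly.

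The hard part will be pinning down the exact homogeneity exponent of $\Xi_n$; once its integral representation from \cite{Shi1} is used, the change of variables produces the factor $p^{\,n(\alpha+\beta)-n(n+1)/2}$ with $\alpha+\beta=s$, and the $\Gamma$-normalizations drop out because they depend only on $\alpha,\beta,n$ and not on $Y,N$. A convenient internal check, which also guards against a normalization slip, is that the final exponent must be independent of the weight $k$; this forces the $\Xi_n$-exponent to equal exactly $ns-n(n+1)/2$, matching the direct computation.
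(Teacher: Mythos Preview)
Your proof is correct and follows essentially the same approach as the paper: both arguments express the Fourier coefficient $A^{(\nu)}(N;Y,s)$ via Proposition~\ref{prop_Fourier_exp_Eisenstein_level_p_1}, invoke the $U(p)$-eigenvalue relation $A^{(\nu)}(pN;p^{-1}Y,s)=p^{l(s,\nu)}A^{(\nu)}(N;Y,s)$, use the homogeneity $\Xi_n(p^{-1}Y,pN,s_1,s_2)=p^{ns-n(n+1)/2}\Xi_n(Y,N,s_1,s_2)$ from \cite{Shi1}, and then compare $p$-powers. Your write-up is in fact slightly more explicit than the paper's, spelling out the invariance of the unramified factors and the determinant scaling that the paper folds into a one-line remark.
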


\begin{proof} By Proposition \ref{prop_Fourier_exp_Eisenstein_level_p_1}, the Fourier coefficient of $E^{n,(\nu)}_{k,\psi}(Z,s)$ at $N$ is given by
\begin{align*} A_\psi^{(\nu)}(N;Y,s) & = \det(Y)^{(s-k)/2} \Xi_n (Y,N, s_1,s_2 ) \\
& \qquad \times  S_n^{(\nu)}(\psi, N,p^{-s}) \prod_{q \ne p} S_{n,q}(N, \psi(q)q^{-s}), 
\end{align*}
with $s_1 = (s+k)/2$ and $s_2 = (s-k)/2$.
We have $A_\psi^{(\nu)}(pN;p^{-1}Y,s) = p^{l_n(s,\nu)} A_\psi^{(\nu)}(N;Y,s)$ with $l_n(s,\nu)$ defined in  (\ref{eq_l(s,nu)}).  On the other hand,  we have $\Xi_n(p^{-1}Y,pN, s_1,s_2) = p^{ns-n(n+1)/2} \Xi_n(Y,N,s_1,s_2)$ by \cite[(4.7.K), (4.34.K)]{Shi1}. Comparing the multiplication of $p$-power for both sides, we get the lemma.
\end{proof}

\begin{proof}[Proof of Theorem $\ref{thm_inductive_relation_chi_0}$]
Let $S_{n}(N,s)$ be the ordinary Siegel series at $p$. 
By \cite[Theorem 2.6]{Kat}, we have the recursion formula
\[
S_{n}(N',s) -p^{n+1-2s}S_{n}(N,s) = (1-p^{-s})(1+p^{1-s}) S_{n-1}(N_0,s-1). 
\]
Change $s \mapsto n+1-s$ and apply the functional equation in Theorem \ref{thm_FE_unram_Siegel_series} we can get
\begin{equation} S_n(N',s) - S_n(N,s) = H_n(N,s) S_{n-1}(N_0,s) \label{eq_ordinary_ind-rel}
\end{equation}
with $H_n(N,s)$ as above. Now we shall show that the same inductive relation holds for each characteristic ramified Siegel series.

Ordinary Siegel series $S_n(N,s)$ is the Euler $p$-factor of the Fourier coefficient of 
\[ E^n_k(Z,s) = \det(Y)^{(s-k)/2} \sum_{\g \in P_n(\Z) \backslash \Sp(n,\Z)} j(\g,Z)^{-k} |j(\g,Z)|^{-k+s}. \]
Since we have $E_k^n(Z,s) = \sum_{\nu=0}^n E^{n}_{k,\chi_0}(w_\nu;Z,s)$,  one can write
\[ E^k_n(Z,s) = \sum_{\nu=0}^n a_\nu(s) E^{n,(\nu)}_{k,\chi_0}(Z,s) 
\]
for some $a_\nu(s)$, that is a rational function in $p^{-s}$. 
Moreover since 
\[ \langle U(p^i) E^n_k(Z,s) \mid 0 \le i \le n \rangle_\C =  \mathcal{E}_{k,s}(\G_0^n(p),\chi_0) \]
by \cite[Section 4, Proposition]{Boe}, we have $a_\nu(s) \ne 0$ for each $\nu$. Thus together with (\ref{eq_ordinary_ind-rel}) we have
\[ \sum_{\nu=0}^{n-1} a_\nu(s) \bigl( S_{n}^{(\nu)}(\chi_0,N',s)-S_n^{(\nu)}(\chi_0,N,s) \bigr) = \sum_{\nu=0}^{n-1} a_\nu(s) H_n(N,s) S_{n-1}^{(\nu)} (\chi_0,N_0,s). \]
We regard both sides as functions in $N$. Lemma \ref{lem_EV_ramified_Siegel} says that $S_n^{(\nu)}(\chi_0,N,s)$ is an eigenfunction with eigenvalue $p^{r_n(s,\nu)}$, corresponding to the action $N \mapsto pN$. Since $H_n(pN,s) = p^{n-s} H_n(N,s)$ and $r_n(s,\nu) = r_{n-1}(s,\nu) + n-s$, take the same eigenvalue components from the both sides we have 
\[ S_n^{(\nu)}(\chi_0,N',s) - S_n^{(\nu)}(\chi_0,N,s) = H_n(N,s) S_{n-1}^{(\nu)}(\chi_0,N,s), \]
that is our assertion.
\end{proof}

Now we shall show the recursion formula of characteristic ramified Siegel series following \cite{Kat}. In Theorem \ref{thm_inductive_relation_chi_0} change $s \mapsto n+1-s$ and $\nu \mapsto n-\nu$ we have
\begin{align*}
&  S^{(n-\nu)}_n(\chi_0,N',n+1-s) - S^{(n-\nu)}_n(\chi_0,N,n+1-s) \\
 & \qquad   = H_n(N,n+1-s) S^{(n-\nu)}_{n-1}(\chi_0,N_0,n+1-s).
\end{align*}
We apply the functional equation in Theorem \ref{thm_FE_ramified_Siegel_series} to above. %We simply write $\beta_n^{(\nu)}(N,s)$ instead of $\beta^{(\nu)}_n(\chi_0,N,s)$. 
Note that $\beta^{(\nu)}_n(\chi_0,N',s) = \beta^{(\nu)}_n(\chi_0,N,s)$, $\zeta_p(N) = \zeta_p(N')$ and $e_p(N) = e_p(N')$. Hence we have
\begin{align*}
&  \zeta_p(N)p^{(s-\frac{n+1}{2})(e_p(N) +2) } \bigl(S_n^{(\nu)}(\chi_0,N',s)- p^{n+1-2s}S_n^{(\nu)}(\chi_0,N,s) \bigr) \\
& \quad  = \frac{\beta_n^{(\nu)}(\chi_0,N,s) \, \beta^{(n-\nu)}_{n-1}(\chi_0,N_0,n+1-s)}{\beta_n^{(n-\nu)}(\chi_0,N,n+1-s) \, \beta_{n-1}^{(\nu-1)}(\chi_0,N_0,s-1)} H_n(N,n+1-s) \\
& \quad \quad \times   \zeta_p(N_0)p^{(s-1-\frac{n}{2})e_p(N_0)} S^{(\nu-1)}_{n-1}(\chi_p,N_0,s-1).
\end{align*}
By a direct computation, we can rewrite it to the following simple relation.
\begin{equation*} %\label{inductive_relation_2}
S^{(\nu)}_n(\chi_0,N',s)-p^{n+1-2s} S^{(\nu)}_n(\chi_0,N,s) = (1-p^{\nu+1-2s}) S^{(\nu-1)}_{n-1}(\chi_0,N_0,s-1).
\end{equation*}
Together with Theorem \ref{thm_inductive_relation_chi_0} we have proved our main theorem.

\begin{theorem}[The recursion formula I] \label{thm_Main_theorem_1}
Let $0 \le u_1 \le \cdots \le u_n$ be integers and $\alpha_i \in \Z_p^\times$. Put $N_0 = \diag(\alpha_1 p^{u_1}, \ldots, \alpha_{n-1}p^{u_{n-1}}) \in \Sym^{n-1}(\Z_p)^*$
and $N = N_0 \perp \langle \alpha_n p^{e_n} \rangle$. Then we have the recursion formula
\[ S_n^{(\nu)}(\chi_0,N,s) = \frac{1-p^{\nu+1-2s}}{1-p^{n+1-2s}} S_{n-1}^{(\nu-1)}(\chi_0,N_0,s-1) - \frac{H_n(N,s)}{1-p^{n+1-2s}} S_{n-1}^{(\nu)}(\chi_0,N_0,s) . \]
Here $H_n(N,s)$ is given in Theorem \emph{\ref{thm_inductive_relation_chi_0}}.
\end{theorem}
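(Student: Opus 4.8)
The plan is to eliminate the auxiliary quantity $S_n^{(\nu)}(\chi_0,N',s)$ between the two inductive relations already in hand, so as to leave a single equation expressing $S_n^{(\nu)}(\chi_0,N,s)$ in terms of lower-degree data. This is a purely algebraic step; all the genuine content has already been invested in establishing the two constituent relations.

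Concretely, I have two linear relations among the four quantities $S_n^{(\nu)}(\chi_0,N',s)$, $S_n^{(\nu)}(\chi_0,N,s)$, $S_{n-1}^{(\nu)}(\chi_0,N_0,s)$ and $S_{n-1}^{(\nu-1)}(\chi_0,N_0,s-1)$. The first is the inductive relation of Theorem \ref{thm_inductive_relation_chi_0},
\[ S_n^{(\nu)}(\chi_0,N',s) - S_n^{(\nu)}(\chi_0,N,s) = H_n(N,s)\, S^{(\nu)}_{n-1}(\chi_0,N_0,s), \]
and the second is the relation obtained just above, by applying the functional equation of Theorem \ref{thm_FE_ramified_Siegel_series} to the $(s \mapsto n+1-s,\ \nu \mapsto n-\nu)$ transform of that same inductive relation, namely
\[ S^{(\nu)}_n(\chi_0,N',s) - p^{n+1-2s}\, S^{(\nu)}_n(\chi_0,N,s) = (1-p^{\nu+1-2s})\, S^{(\nu-1)}_{n-1}(\chi_0,N_0,s-1). \]
Subtracting the first from the second cancels $S_n^{(\nu)}(\chi_0,N',s)$ exactly and yields
\[ (1-p^{n+1-2s})\, S_n^{(\nu)}(\chi_0,N,s) = (1-p^{\nu+1-2s})\, S^{(\nu-1)}_{n-1}(\chi_0,N_0,s-1) - H_n(N,s)\, S^{(\nu)}_{n-1}(\chi_0,N_0,s). \]
Dividing through by $1-p^{n+1-2s}$ produces the asserted recursion formula verbatim.

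There is essentially no analytic obstacle at this final stage; the only point deserving a word of care is the division by $1-p^{n+1-2s}$. Since each $S^{(\cdot)}_{\cdot}(\chi_0,\cdot,s)$ is a rational function in $p^{-s}$, I would read the whole identity as an equality in the field of rational functions, in which $1-p^{n+1-2s}$ is a nonzero element, so the division is legitimate. The boundary case $\nu = n$ is consistent: there $S^{(\nu)}_{n-1}(\chi_0,N_0,s)=S_{n-1}^{(n)}(\chi_0,N_0,s)=0$, and both sides collapse to $S_n^{(n)}(\chi_0,N,s)=1$, matching the normalization noted after \eqref{ramified_standard_base_Hecke_eigen}.
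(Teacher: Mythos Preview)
Your proposal is correct and follows essentially the same route as the paper: eliminate $S_n^{(\nu)}(\chi_0,N',s)$ between the inductive relation of Theorem~\ref{thm_inductive_relation_chi_0} and its functional-equation transform, then divide by $1-p^{n+1-2s}$. The paper states the theorem immediately after deriving the second relation with the remark ``Together with Theorem~\ref{thm_inductive_relation_chi_0} we have proved our main theorem,'' which is exactly the subtraction you spell out.
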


\subsection{The case of quadratic character}

In the case of quadratic character, we cannot get the inductive relation from ordinary Siegel series. However Watanabe proved the following theorem in \cite{Wat2}.

\begin{theorem}[{\cite[Theorem 6.1]{Wat2}}] \label{thm_inductive_relation_ramified}
For any $\nu$ $(0 \le \nu \le n)$, we have
\[ S^{w_\nu}_n(\chi_p,N',s)-S^{w_\nu}_n(\chi_p,N,s) = H_n(\chi_p,N,s) S^{w_{\nu}}_{n-1}(\chi_p, N_0,s) \]
with a certain function $H_n(\chi_p,N,s)$, that is independent of $\nu$. Here we understand $S^{w_n}_{n-1}(\chi_p,N,s) = 0$.
\end{theorem}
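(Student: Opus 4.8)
The plan is to follow the same two-step strategy that succeeds for the trivial character in Theorem~\ref{thm_inductive_relation_chi_0}, but to carry it out directly on the ramified series rather than importing it from an ordinary Siegel series. For $\psi = \chi_0$ the relation descended from Katsurada's inductive formula precisely because the ordinary Siegel series is assembled from the trivial-character Eisenstein series $\sum_\nu E^n_{k,\chi_0}(w_\nu;Z,s)$; no such ambient object exists for $\chi_p$, so one must extract the relation from the definition of $S^{w_\nu}_n(\chi_p,N,s)$ itself. First I would isolate the $N$-dependence of the summand: writing $R \in \Sym^n(\Q_p/\Z_p)^{(\nu)}$ in block form with last diagonal entry $\rho$, the only factor depending on $N$ is $\epi(NR) = \epi(N_0 R_0)\,\exp\!\bigl(2\pi\sqrt{-1}\,\varphi(\alpha_n p^{u_n}\rho)\bigr)$, and passing from $N$ to $N'$ replaces $u_n$ by $u_n+2$ in the last factor alone, leaving $\delta_p(R)$ and $\widetilde{\chi_p}_{w_\nu}(R)$ untouched.

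Second, I would compute the difference $S^{w_\nu}_n(\chi_p,N',s) - S^{w_\nu}_n(\chi_p,N,s)$ by peeling off the last row and column of $R$, in the spirit of Katsurada's reduction from degree $n$ to degree $n-1$. The two additive characters $\exp(2\pi\sqrt{-1}\,\varphi(\alpha_n p^{u_n}\rho))$ and $\exp(2\pi\sqrt{-1}\,\varphi(\alpha_n p^{u_n+2}\rho))$ coincide except on a thin set of $\rho$ singled out by their $p$-adic order, so the difference collapses to a residual sum supported near that level. To evaluate this residual sum in closed form I would invoke Watanabe's Sato--Hironaka-type formula from~\cite{Wat1}, which resolves $S^{w_\nu}_n(\chi_p,N,s)$ into a sum over involutions $\sigma$ and $\sigma$-stable partitions and exhibits the dependence on each elementary divisor $p^{u_i}$ through explicit geometric-series and Gauss-sum factors. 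With the $u_n$-dependence thereby separated, the difference telescopes and factors as a single degree-$(n-1)$ series $S^{w_\nu}_{n-1}(\chi_p,N_0,s)$ times a prefactor, which one then matches against the stated $H_n(\chi_p,N,s)$ using the explicit formula for $S^{(0)}$ in Corollary~\ref{cor_explicit_formula_S_n^(n)}.

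The main obstacle is to show that the prefactor $H_n(\chi_p,N,s)$ is genuinely independent of the cusp label $\nu$. This is delicate because $\nu$ controls both the rank-mod-$p$ constraint $\rank(C \bmod p) = \nu$ and the Iwahori character $\widetilde{\chi_p}_{w_\nu}$, and peeling off the last variable can alter the rank modulo $p$; a priori the labeling by $\nu$ is entangled with the very coordinate whose level we are perturbing. The resolution I expect is that the passage $u_n \mapsto u_n+2$ disturbs only a block of the $\sigma$-stable partition that is combinatorially decoupled from the part recording $\nu$, so that the surviving Gauss-sum and character contributions reassemble into exactly the degree-$(n-1)$ series with the same $\nu$; making this decoupling precise is the technical heart of the argument. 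Once the $w_\nu$-relation is secured, the $U(p)$-characteristic version follows immediately by taking the linear combination \eqref{ramified_Hecke_eigen_standard_base}: the coefficients $b_{\chi_p}(s)_{\nu r}$ are independent of both $N$ and the degree, the $r=n$ term drops out because $S^{w_n}_{n-1}(\chi_p,N_0,s)=0$, and $H_n(\chi_p,N,s)$ factors out precisely because it does not depend on $\nu$.
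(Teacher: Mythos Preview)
The paper does not supply its own proof of this statement: it is quoted verbatim from Watanabe~\cite[Theorem 6.1]{Wat2}, and the accompanying remark records only that Watanabe's argument rests on his Sato--Hironaka-type closed formula for $S^{w_\nu}_n(\chi_p,N,s)$ from~\cite{Wat1}. Your plan is therefore not in competition with a proof in this paper but with Watanabe's, and at the level of strategy it matches: isolate the dependence on the last elementary divisor, feed in the Sato--Hironaka formula so that the difference $S^{w_\nu}_n(\chi_p,N',s)-S^{w_\nu}_n(\chi_p,N,s)$ telescopes, and recognize the result as a degree-$(n-1)$ series times a prefactor.

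What remains a genuine gap in your proposal is exactly the point you flag as ``the technical heart'': you assert that the block of the $\sigma$-stable partition perturbed by $u_n \mapsto u_n+2$ is combinatorially decoupled from the part recording $\nu$, but you do not verify it. This is not automatic --- the rank-mod-$p$ condition $\rank(C \bmod p)=\nu$ and the Iwahori character $\widetilde{\chi_p}_{w_\nu}$ both interact with the last row and column of $R$, and the Sato--Hironaka decomposition mixes the coordinates through the involution $\sigma$. Establishing $\nu$-independence of $H_n(\chi_p,N,s)$ is precisely the substance of Watanabe's theorem, and your sketch stops short of it. Your final paragraph, passing from the $w_\nu$-relation to the $U(p)$-characteristic relation via the $b_{\chi_p}(s)_{\nu r}$, is correct and is exactly how the paper obtains~\eqref{inductive_relation_eigen-ramified}.
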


\begin{remark} Watanabe gave the function $H_n(\chi_p,N,s)$ explicitly by dividing several cases. Here we shall give another description of $H_n(\chi_p,N,s)$ in Lemma \ref{lem_explicit_H(chi_p,N,s)} below. In order to show the theorem, Watanabe used his closed formula of $S^{w_\nu}_n(\chi_p,N,s)$, that we call ``Sato-Hironaka-type formula'' in Introduction.
\end{remark}

The crucial point is that $H_n(\chi_p,N,s)$ does not depend on $\nu$. From that we have for all $\nu$
\begin{equation} \label{inductive_relation_eigen-ramified}
 S^{(\nu)}_n(\chi_p,N',s) - S^{(\nu)}_n(\chi_p,N,s) = H_n(\chi_p,N,s) S^{(\nu)}_{n-1}(\chi_p,N_0,s),
\end{equation}
because in (\ref{ramified_Hecke_eigen_standard_base}) the coefficients $b_{\chi_p}(s)_{\nu r}$ are common for $S^{(\nu)}_n$ and $S^{(\nu)}_{n-1}$.
One can find the following explicit form of $H_n(\chi_p,N,s)$ by applying this relation to $\nu=0$, since we know the explicit formula of $S^{(0)}_n(\chi_p,N,s)$ by Corollary \ref{cor_explicit_formula_S_n^(n)}.

\begin{lemma} \label{lem_explicit_H(chi_p,N,s)}
Let $N = \diag(\alpha_1 p^{u_1}, \cdots, \alpha_n p^{u_n})$.
\begin{enumerate}[$(1)$]
\item If $n$ is even then 
\begin{align*} H_n(\chi_p,N,s) & =  \ve_p \eta_p(N_0) p^{\frac{1}{2} d_p(N_0) + (\frac{n+1}{2}-s)(u_n + a_N+1)}  \\
& \qquad \times \frac{ (1-p^{n-2s})(1-\chi_N^*(p)p^{s-\frac{n}{2}-1})}{1-\chi_N^*(p)p^{\frac{n}{2}-s}}. 
\end{align*}
\item If $n$ is odd then
\begin{align*}
H_n(\chi_p,N,s) & = -\ve_p \eta_p(N) p^{\frac{1}{2}d_p(N) + (\frac{n}{2}-s)(u_n + a_{N_0}+1 )} \\
& \qquad \times  \frac{ (1-p^{n+1-2s})(1-\chi_{N_0}^*(p)p^{s-\frac{n-1}{2}})}{1-\chi_{N_0}^*(p)p^{\frac{n+1}{2}-s}}. 
\end{align*}
\end{enumerate}
\end{lemma}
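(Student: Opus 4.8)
The plan is to read off $H_n(\chi_p,N,s)$ from the inductive relation (\ref{inductive_relation_eigen-ramified}) specialized to $\nu=0$. Since $S^{(0)}_{n-1}(\chi_p,N_0,s)$ is a nonzero rational function (it is an explicit product of nonvanishing factors), that relation is equivalent to
\[ H_n(\chi_p,N,s)=\frac{S^{(0)}_n(\chi_p,N',s)-S^{(0)}_n(\chi_p,N,s)}{S^{(0)}_{n-1}(\chi_p,N_0,s)}, \]
and all three $U(p)$-characteristic series on the right are given in closed form by Corollary \ref{cor_explicit_formula_S_n^(n)}(2). So the whole argument is a disciplined simplification of this quotient, and the first step is to record how the invariants $d_p$, $D'$, $\chi^*$, $\eta_p$, $a$ behave under the two passages $N\mapsto N'$ and $N\mapsto N_0$.

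For the numerator I would first observe that $\det N'=p^2\det N$, so $\langle\alpha_n p^{u_n+2}\rangle$ and $\langle\alpha_n p^{u_n}\rangle$ are $\Q_p$-isometric; hence $N'$ and $N$ share the same Hasse invariant and the same square class of determinant. This yields $d_p(N')=d_p(N)+2$, $D'_{N'}=D'_N$, $\chi^*_{N'}=\chi^*_N$, $\eta_p(N')=\eta_p(N)$ and $a_{N'}=a_N$, so the only change in Corollary \ref{cor_explicit_formula_S_n^(n)}(2) is in the $p$-power, giving $S^{(0)}_n(\chi_p,N',s)=p^{n+1-2s}S^{(0)}_n(\chi_p,N,s)$. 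Thus the numerator equals $-(1-p^{n+1-2s})S^{(0)}_n(\chi_p,N,s)$, and it remains to evaluate the quotient $S^{(0)}_n(\chi_p,N,s)/S^{(0)}_{n-1}(\chi_p,N_0,s)$ using $d_p(N)=d_p(N_0)+u_n$ and the fact that dropping the degree from $n$ to $n-1$ switches the even/odd branch of the corollary.

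When $n$ is even, $N_0$ has odd degree so $\chi^*_{N_0}(p)=0$: the character factor of $S^{(0)}_n$ passes through untouched, the zeta products telescope to the top factor $1-p^{n-2s}$ against a denominator $1-p^{2s-1-n}$, and rewriting $1-p^{2s-1-n}=-p^{2s-1-n}(1-p^{n+1-2s})$ cancels the $1-p^{n+1-2s}$ coming from the numerator. Collecting powers with the abbreviations $A=\tfrac{n+1}2-s$, $B=\tfrac n2-s=A-\tfrac12$ reduces the exponent to $\tfrac12 d_p(N_0)+(\tfrac{n+1}2-s)(u_n+a_N+1)$, while $\ve_p^{\,n}/\ve_p^{\,n-1}=\ve_p$ and $\eta_p(N_0)^{-1}=\eta_p(N_0)$ give the stated prefactor. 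This settles part (1).

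The odd case carries the real difficulty. Now $N_0$ has even degree, the zeta products cancel completely, and the only surviving quotient of binomials comes from $S^{(0)}_{n-1}$ in the ``wrong'' orientation $\dfrac{1-\chi^*_{N_0}(p)p^{(n-1)/2-s}}{1-\chi^*_{N_0}(p)p^{s-(n-1)/2-1}}$. When $\chi^*_{N_0}(p)\ne0$ I would invoke $\chi^*_{N_0}(p)^2=1$ to flip both binomials into the asserted factor $\dfrac{1-\chi^*_{N_0}(p)p^{s-(n-1)/2}}{1-\chi^*_{N_0}(p)p^{(n+1)/2-s}}$, at the cost of an extra $p^{n-2s}$; when $\chi^*_{N_0}(p)=0$ no such factor appears. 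The point that reconciles these into a single closed formula is the arithmetic equivalence $\chi^*_{N_0}(p)\ne0\iff p\mid\fd_{N_0}\iff d_p(N_0)\ \text{odd}\iff a_{N_0}=1$. Indeed the raw quotient has $p$-exponent $\tfrac12 d_p(N)+(\tfrac n2-s)(u_n+1-a_{N_0})$, and the flip contributes the extra $p^{n-2s}=p^{2(n/2-s)}$ exactly when $a_{N_0}=1$; the two $a_{N_0}$-dependences combine to the single exponent $(\tfrac n2-s)(u_n+a_{N_0}+1)$ of part (2). The main obstacle is precisely this bookkeeping — keeping the character factor, the parity-dependent branch of the corollary, and the invariant $a_{N_0}$ mutually consistent — together with verifying the stability of $\zeta_p$ and $\eta_p$ under $N\mapsto N'$ via the Hilbert-symbol and Hasse-invariant identities.
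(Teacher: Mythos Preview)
Your proposal is correct and follows exactly the approach the paper indicates: specialize the inductive relation~(\ref{inductive_relation_eigen-ramified}) to $\nu=0$ and substitute the closed form of $S^{(0)}$ from Corollary~\ref{cor_explicit_formula_S_n^(n)}(2). The paper leaves the actual simplification to the reader, whereas you carry it out in detail; your bookkeeping of the invariants under $N\mapsto N'$ and $N\mapsto N_0$, the parity argument $\chi_{N_0}^*(p)\ne 0\iff a_{N_0}=1$ (which follows from $e_p(N_0)$ being even), and the ``flip'' of the $\chi_{N_0}^*$-factor producing the extra $p^{n-2s}$ are all correct and constitute precisely the computation the paper omits.
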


By a similar argument to the case of trivial character, we have the following result.

\begin{theorem}[The recursion formula II] \label{thm_Main_theorem_2}
Let $0 \le u_1 \le \cdots \le u_n$ be integers and $\alpha_i \in \Z_p^\times$. Put $N_0 = \diag(\alpha_1 p^{u_1}, \ldots, \alpha_{n-1}p^{u_{n-1}}) \in \Sym^{n-1}(\Z_p)^*$
and $N = N_0 \perp \langle \alpha_n p^{u_n} \rangle$. Then we have the recursion formula
\[ S_n^{(\nu)}(\chi_p,N,s) = \frac{1-p^{\nu+1-2s}}{1-p^{n+1-2s}} S_{n-1}^{(\nu-1)}(\chi_p,N_0,s-1) - \frac{H_n(\chi_p,N,s)}{1-p^{n+1-2s}} S_{n-1}^{(\nu)}(\chi_p,N_0,s) . \]
Here $H_n(\chi_p,N,s)$ is given in Lemma \emph{\ref{lem_explicit_H(chi_p,N,s)}}.
\end{theorem}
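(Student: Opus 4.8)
The plan is to reproduce, for the quadratic character, the exact manipulation that produced Theorem \ref{thm_Main_theorem_1} in the trivial case. The one ingredient that had to be established by hand for $\psi=\chi_0$ --- an inductive relation valid on each $U(p)$-eigenfunction --- is here already available as (\ref{inductive_relation_eigen-ramified}), which follows from Watanabe's Theorem \ref{thm_inductive_relation_ramified} together with the fact that the transition coefficients $b_{\chi_p}(s)_{\nu r}$ depend only on $\nu$, $r$, $s$ and not on the degree, so that the $\nu$-independence of $H_n(\chi_p,N,s)$ lets one pass Watanabe's relation through the linear combination (\ref{ramified_Hecke_eigen_standard_base}). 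I would therefore take (\ref{inductive_relation_eigen-ramified}) and the explicit $H_n(\chi_p,N,s)$ of Lemma \ref{lem_explicit_H(chi_p,N,s)} as the starting data.

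First I would produce a companion relation. Substitute $s \mapsto n+1-s$ and $\nu \mapsto n-\nu$ in (\ref{inductive_relation_eigen-ramified}), and then replace each of the three resulting quantities $S^{(n-\nu)}_n(\chi_p,N',n+1-s)$, $S^{(n-\nu)}_n(\chi_p,N,n+1-s)$ and $S^{(n-\nu)}_{n-1}(\chi_p,N_0,n+1-s)$ by its value at $s$ using $S^{(\mu)}_m=\beta^{(\mu)}_m F^{(\mu)}_m$ and the $\chi_p$-functional equation of Theorem \ref{thm_FE_ramified_Siegel_series}. For the degree-$(n-1)$ term one uses $n+1-s=(n-1)+1-(s-1)$, so that its functional equation converts it into an $S^{(\nu-1)}_{n-1}(\chi_p,N_0,s-1)$. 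In carrying this out I would invoke the invariances under $N\mapsto N'$, namely $\beta^{(\nu)}_n(\chi_p,N',s)=\beta^{(\nu)}_n(\chi_p,N,s)$, $\eta_p(N')=\eta_p(N)$, $\chi^*_{N'}=\chi^*_N$ and $a_{N'}=a_N$, together with $d_p(N')=d_p(N)+2$, which is exactly what shifts a factor $p^{2s-n-1}$ from the $N'$-term onto the $N$-term. The expected outcome, precisely as in the $\chi_0$ case, is
\[ S^{(\nu)}_n(\chi_p,N',s)-p^{n+1-2s}S^{(\nu)}_n(\chi_p,N,s)=(1-p^{\nu+1-2s})S^{(\nu-1)}_{n-1}(\chi_p,N_0,s-1). \]

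Finally I would eliminate $S^{(\nu)}_n(\chi_p,N',s)$ by subtracting this companion relation from (\ref{inductive_relation_eigen-ramified}): the $N'$-term cancels, the $N$-terms combine into $(p^{n+1-2s}-1)S^{(\nu)}_n(\chi_p,N,s)$, and dividing by $-(1-p^{n+1-2s})$ isolates $S^{(\nu)}_n(\chi_p,N,s)$ in the asserted form, the coefficient $-H_n(\chi_p,N,s)/(1-p^{n+1-2s})$ coming straight from (\ref{inductive_relation_eigen-ramified}).

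The hard part is the ``direct computation'' hidden in the second step: verifying that the accumulated ratio of $\beta$-factors at $n+1-s$ versus $s$, the quotients of $\eta_p$, the $\ve_p$ Gauss-sum normalizations, and the $p$-powers with exponents built from $d_p$, $a_N$, $a_{N_0}$ and $u_n$ collapse to precisely the clean coefficient $1-p^{\nu+1-2s}$, with no stray factor of $\chi^*_N(p)$ or $\ve_p$ surviving. This rests on the explicit shape of $\beta^{(\nu)}_n(\chi_p,N,s)$ and on the denominators $1-\chi^*_N(p)p^{n/2-s}$ matching the corresponding factors inside $H_n(\chi_p,N,n+1-s)$ via Lemma \ref{lem_explicit_H(chi_p,N,s)}; moreover the entire bookkeeping must be carried through along the parity split of $n$, since both $\beta^{(\nu)}_n$ and $H_n(\chi_p,N,s)$ change form according to the parity of $n$ while the degree drops to $n-1$ of the opposite parity.
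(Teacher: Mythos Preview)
Your proposal is correct and follows exactly the route the paper takes: start from the eigen-level inductive relation (\ref{inductive_relation_eigen-ramified}) (obtained from Watanabe's theorem and the $\nu$-independence of $H_n(\chi_p,N,s)$), substitute $s\mapsto n+1-s$, $\nu\mapsto n-\nu$, apply the $\chi_p$-functional equation of Theorem \ref{thm_FE_ramified_Siegel_series}, and then eliminate $S_n^{(\nu)}(\chi_p,N',s)$ against the original relation. The paper records only ``by a similar argument to the case of trivial character'' for this theorem, and your write-up spells out precisely that similar argument, including the correct bookkeeping $d_p(N')=d_p(N)+2$, $a_{N'}=a_N$, $\eta_p(N')=\eta_p(N)$, $\chi^*_{N'}=\chi^*_N$ and the identification $n+1-s=(n-1)+1-(s-1)$ for the degree-$(n-1)$ functional equation.
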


%In terms of the principal part $F^{(\nu)}_n(\chi_p,N,s)$, we have the following recursion formula.
%Let 
%\[
%C_n(N,s) = \begin{cases} \dfrac{\ve_p p^{-1/2}(1-\chi_N^*(p) p^{n/2-s})}{1-p^{n+1-2s}} & \text{$n$ is even,} \vrule width0pt depth15pt \\
%\dfrac{\ve_pp^{-1/2}}{1-\chi_{N_0}^*(p)p^{\frac{n-1}{2}-s}} & \text{$n$ is odd} \end{cases}
%\]
%and
%\begin{align*}
%& D_n(N,s) = \\
%& \begin{cases} \dfrac{\ve_p \eta_p(N_0) p^{\frac{1}{2}d_p(N_0)+(\frac{n+1}{2}-s)(e_n+a_N+1)}(1-\chi_N^*(p)p^{s-\frac{n}{2}-1})}{1-p^{n+1-2s}} & \text{$n$ is even,} \vrule width0pt depth15pt \\
%-\dfrac{\ve_p \eta_p(N) p^{\frac{1}{2}d_p(N) +(\frac{n}{2}-s)(e_n-a_{N_0}+1)}}{1-\chi_{N_0}^*(p)p^{s-\frac{n+1}{2}}} & \text{$n$ is odd.} \end{cases}
%\end{align*}
%
%\begin{theorem} 
%Under the same notation as in theorem \emph{\ref{thm_Main_theorem}} we have
%\[ F^{(\nu)}_n(\chi_p,N,s) = C_n(N,s) F^{(\nu-1)}_{n-1}(\chi_p,N_0,s-1) + D_n(N,s) F^{(\nu)}_{n-1}(\chi_p,N_0,s). \]
%\end{theorem}
%
%This formulation is an analogy of \cite[Theorem 4.1]{Kat} and seems quite similar.

As a consequence, we can easily compute the Fourier coefficients of Siegel Eisenstein series $E^{n}_{k,\psi}(w_\nu;Z)$, by Theorem \ref{thm_Main_theorem_1} and Theorem \ref{thm_Main_theorem_2} together with (\ref{ramified_standard_base_Hecke_eigen}) and Proposition \ref{prop_explicit_c_psi_ij}.

\section{Examples} \label{Sec_example}

We compute the ramified Siegel series for low degree case by using our recursion formulas. We treat the case of quadratic character: $\psi = \chi_p$.

The degree $1$ case, that is a starting point, the following formula is well-known. For $N=  \alpha p^m$ with $\alpha \in \Z_p^\times$, 
\begin{align*}
S^{w_0}_1(\chi_p, N, s) = S^{(0)}_1(\chi_p, N, s) & = \chi_p(\alpha) \ve_p p^{(1-s)m + 1/2-s} \\
S_1^{w_1}(\chi_p, N,s) = S^{(1)}_1(\chi_p, N, s) & = 1.
\end{align*}
The former equation also follows from Corollary \ref{cor_explicit_formula_S_n^(n)}.

In degree 2 case, we compute $S^{w_1}_2(\chi_p,N,s) = S^{(1)}_2(\chi_p,N,s)$. 
Let $N_0 = \alpha p^m$ and $N = p^m \diag(\alpha , \beta p^r)$ with $\alpha, \beta \in \Z_p^\times$. We put $l_N = [\frac{r+1}{2}]$, then in $H_2(\chi_p,N,s)$ the term $u_2+a_N = m+2l_N$. Theorem \ref{thm_Main_theorem_2} says
\begin{align*} S^{(1)}_2(\chi_p,N,s) & = \frac{1-p^{2-2s}}{1-p^{3-2s}}S_1^{(0)}(\chi_p,N_0,s-1) - \frac{H_2(\chi_p,N,s)}{1-p^{3-2s}} \\
& = \frac{\chi_p(\alpha) p^{(2-s)m+3/2-s}(1-p^{2-2s})}{(1-p^{3-2s})(1-\chi_N^*(p)p^{1-s})}  \\
& \qquad  \quad \times \bigl( 1-p^{(3-2s)l_N} -\chi_N^*(p)p^{1-s}(1-p^{(3-2s)(l_N-1)}) \bigr),
\end{align*}
that coincides with \cite[Proposition 3.2]{Gu2}.

Next we consider the case of degree 3. The author has already calculated $S^{w_0}_3(\chi_p,N,s)$ in \cite{Gu3}, we shall re-compute it by our recursion formula. Let $N_0 = p^m(\alpha, p^r \beta)$, $N = N_0 \perp \langle \g p^{m+r+t}  \rangle$ and $l_{N_0} = [ \frac{r+1}{2} ]$ as above, moreover $f_{N_0} = m + [\frac{r}{2}]$.
By (\ref{ramified_standard_base_Hecke_eigen}), Proposition \ref{prop_explicit_c_psi_ij} and Corollary \ref{cor_explicit_formula_S_n^(n)} we have
\begin{align*} S_3^{w_0}(\chi_p, N,s ) & = -\frac{\chi_p(-1) \eta_p(N) \ve_p p^{(2-s)d_p(N)+7/2-3s}(1-p^{2-2s})}{(1-p^{3-2s})} \\
& + \frac{\chi_p(-1)p^{1-2s}(p-1)}{(1-p^{3-2s})} S_3^{(2)}(\chi_p,N,s).
\end{align*}
By Theorem \ref{thm_Main_theorem_2},
\begin{align*} & \quad S_3^{(2)}(\chi_p,N,s)  = \frac{1}{1-p^{4-2s}}( (1-p^{3-2s})S_2^{(1)}(\chi_p,N,s-1) - H_3(\chi_p,N,s)) \\
& = \frac{\eta_p(N) \ve_p p^{(2-s)d_p(N)-(3-2s)f_{N_0} + 3/2-s} (1-\chi_{N_0}^*(p)p^{s-1})}{1-\chi_{N_0}^*(p) p^{2-s}} \\
& + \frac{\chi_p(\alpha) p^{(3-s)m+5/2-s}(1-p^{3-2s})}{(1-p^{5-2s})(1-\chi_{N_0}^*(p)p^{2-s})}  \bigl( 1-p^{(5-2s)l_{N_0}} -\chi_{N_0}^*(p)p^{2-s}(1-p^{(5-2s)(l_{N_0}-1)}) \bigr).
\end{align*}
We can get the same result as \cite[Theorem 4.1]{Gu3}.


\begin{thebibliography}{Kau}
\bibitem{Boe} S. B\"ocherer  ``On the space of Eisenstein series for $\Gamma_0(p)$: Fourier expansions. With an appendix by H. Katsurada'' Comment. Math. Univ. St. Pauli \textbf{63} no. 1-2, 3--22  (2014).


\bibitem{BK} S. B\"ocherer and W. Kohnen ``On the functional equation of singular series'' Abh. Math. Sem. Univ. Hamburg \textbf{70}, 281-286, (2000).

\bibitem{Di} M. Dickson, ``Fourier coefficients of degree two Siegel-Eisenstein series with trivial character at squarefree level'', Ramanujan J. \textbf{37} (2015), no. 3, 541--562.

\bibitem{GR} G. Gasper and M. Rahman \textit{Basic Hypergeometric series}, 
Encyclopedia Math. Appl., \textbf{96}
Cambridge University Press, Cambridge,(2004).

\bibitem{Gu1} K. Gunji ``On the Fourier coefficients of the Siegel Eisenstein series of odd level and the genus theta series'',  J. Number Theory \textbf{240} (2022), 105--138.


\bibitem{Gu2} K. Gunji, ``Fourier coefficients of the Siegel Eisenstein series of degree 2 with odd prime level, corresponding to the middle cusp'', Abh. Math. Semin. Univ. Hambg. \textbf{92}, 69--83, (2022).

\bibitem{Gu3} K. Gunji, ``On the Fourier coefficients of Siegel Eisenstein series of degree 3 of an odd prime level with the quadratic character'', J. math. soc. Japan \textbf{76}, No. 1, 173--216, (2024).


\bibitem{Gu4} K. Gunji, ``On the functional equations of Siegel Eisenstein series of an odd prime level $p$'', preprint, arXiv:2306.10696.


\bibitem{SH}  Y. Hironaka and F. Sato, Local densities of representations of quadratic forms over $p$-adic integers (the non dyadic case). J. Number Theory \textbf{83},(2000), 106--136.



\bibitem{Kat} H. Katsurada, ``An explicit formula for Siegel series'', Amer.\ J. Math.\ \textbf{121}, 415--452 (1999).



\bibitem{Miz} S. Mizumoto ``Eisenstein series for Siegel modular groups'', Math. Ann. \textbf{297}, 581--625 (1993).

\bibitem{Mizno} Y. Mizuno, ``An explicit arithmetic formula for the Fourier coefficients of Siegel-Eisenstein series of degree two and square-free odd levels'', Math.\ Z.\ \textbf{263} (2009), no. 4, 837--860.




\bibitem{Shi1} G. Shimura, ``Confluent hypergeometric functions on tube domains'', Math. Ann. \textbf{260}, no. 3, 269--302 (1982).

\bibitem{Shi2} G. Shimura, ``Euler products and Fourier coefficients of automorphic forms on symplectic groups'', Invent. Math. \textbf{116}, (1994), 531--576.


\bibitem{Ta} S. Takemori, ``$p$-adic Siegel Eisenstein series of degree two'', J. Number Theory \textbf{132} (2012), 1203--1264.


\bibitem{Wat1} M, Watanabe, ``On the calculation of the ramified Siegel series'', preprint, arXiv:2302.04429


\bibitem{Wat2} M. Watanabe, ``On the ramified Siegel series'', preprint.

\end{thebibliography}
\end{document}